\documentclass[leqno]{birkjour}

\usepackage{amssymb,amsfonts,amsthm,amsgen,amscd}
\usepackage[matrix,arrow,curve]{xy}
\usepackage{graphicx}

\usepackage{cite}
\usepackage{pxfonts}
\usepackage{enumerate}
\usepackage{yhmath}
\usepackage{ulem}
\usepackage{float}
\usepackage{microtype}

\usepackage[pdfborder={0 0 0}]{hyperref}

\numberwithin{figure}{section}
%
%
	\newtheorem{thm}{Theorem}[section]
	\newtheorem{cor}[thm]{Corollary}
	\newtheorem{lem}[thm]{Lemma}
	\newtheorem{prop}[thm]{Proposition}
	\newtheorem{dia}[thm]{Diagram}
  \theoremstyle{definition}
	\newtheorem{defn}[thm]{Definition}
	\newtheorem{tble}[thm]{Table}
	\newtheorem{con}[thm]{Conventions and Notations}
\theoremstyle{remark}
	\newtheorem{rem}[thm]{Remark}
	
  \newtheoremstyle{example}{3pt}{3pt}{}{}{\bfseries}{:}{.5em}{}
	\theoremstyle{example}
	\newtheorem{exa}[thm]{Example}
	\newtheorem{war}[thm]{Warning}
	\newtheorem*{example}{Example}

	\newcommand{\Z}{\mathbb{Z}}
	\newcommand{\C}{\mathbb{C}}
	\newcommand{\R}{\mathbb{R}}
	\newcommand{\K}{\mathbb{K}}
	\renewcommand{\H}{\mathbb{H}}
	\newcommand{\dis}{\displaystyle}
	\newcommand{\scr}{\scriptstyle}
	\newcommand{\lt}{\left(}
	\newcommand{\rt}{\right)}

	\newcommand{\Bigdownarrow}{\Downarrow}
	
	\newcommand{\scirc}{{\scriptscriptstyle{\circ}}}
	\newcommand{\incl}{\operatorname{incl}}
	\newcommand{\id}{\operatorname{id}}
	\newcommand{\pr}{\operatorname{pr}}
	\newcommand{\stab}{\operatorname{stab}}
	\newcommand{\proj}{\operatorname{proj}}

\begin{document}

\title[Minimum numbers and Wecken theorems in coincidence theory]{Minimum numbers and Wecken theorems in topological coincidence theory. I}
\author{Ulrich Koschorke}
\address{%
Fachbereich Mathematik\\
Universit\"at Siegen\\
57068 Siegen, Germany}

\email{koschorke@mathematik.uni-siegen.de}

\dedicatory{
Gratefully dedicated to my thesis advisor \\
\large{Richard S. Palais on the occassion of his 80$^{\text{th}}$ birthday}
}

\begin{abstract}
	Minimum numbers measure the obstruction to removing coincidences of two given maps (between smooth manifolds \ $M$ \ and \ $N$, \ of dimensions $m$ and $n$, resp.).
	In this paper we compare them to four distinct types of Nielsen numbers.
	These agree with the classical Nielsen number when \ $m = n$ \ (e.\,g. in the fixed point setting where \ $M = N$ \ and one of the maps is the identity map).
	However, in higher codimensions \ $m - n > 0$ \ their definitions and computations involve distinct aspects of differential topology and homotopy theory.

	We develop tools which help us 1.) to decide when a minimum number is equal to a Nielsen number (``Wecken theorem''), and 2.) to determine Nielsen numbers. Here certain homotopy theoretical criteria play a central r\^ole. E.\,g. failures of the ``Wecken condition'' (cf. definition 1.18 below)
	can have very interesting geometric consequences.
	The selfcoincidence case where the two maps are homotopic turns out to be particularly illuminating.

	We give many concrete applications in special settings where \ $M$ \ or \ $N$ \ are spheres, spherical space forms, projective spaces, tori,
	Stiefel manifolds or Grassmannians.
	Already in the simplest examples an important role is played e.\,g. by Kervaire invariants, all versions of Hopf invariants (\`{a} la James, Hilton, Ganea,\dots), and the elements in the stable homotopy of spheres defined by invariantly framed Lie groups.
\end{abstract}
	\subjclass{Primary 54H25, 55M20; Secondary 55P35, 55Q25, 55Q40}

	\keywords{Coincidence, minimum number, Nielsen number, Wecken condition, loose by small deformation, Kervaire invariant, Hopf invariant, degree}

	\thanks{Supported in part by DFG (Germany) and by the Research in Pairs progamm of MFO (Oberwolfach)}

\maketitle

\section{Introduction and discussion of results} \label{sec:1}

According to Robert Brown (cf. \cite{b}, p.\,9, or \cite{br2}, p. 19), the principal question of topological fixed point theory can be phrased as follows.
\bigskip

\begin{itshape}
	Given a selfmap \ $f$ \ of a topological space \ $M$, what is the minimum number \ $MF(f)$ \ of fixed points among all the maps homotopic to \ $f$?
\end{itshape}
\bigskip

Now, the fixed points of \ $f$ \ are just the points where \ $f$ \ coincides with the identity map id.
It is very natural to consider, more generally, the coincidence set
\stepcounter{thm}
\begin{equation}\label{equ:11}
	C(f_1,f_2) \coloneqq \{ x \in M \vert f_1(x) = f_2(x)\}
\end{equation}
of an arbitrary pair \ $f_1, f_2 \colon M\to N$ \ of (continuous) maps between nonempty smooth connected manifolds without boundary, where \ $M$ \ is compact.

Our principal aim will be to get a thorough understanding of interesting analoga of \ $MF$.
A first candidate is the {\it \textbf{m}inimum number of \textbf{c}oincidence points}
\stepcounter{thm}
\begin{equation}
	MC(f_1,f_2) \;\coloneqq \; \operatorname{min} \{\,\# C(f_1', f_2') \mid f_1' \sim f_1, f_2' \sim f_2 \, \}.
\end{equation}
(If \ $f$ \ is a selfmap of a closed manifold, it follows from a result of R. Brooks \cite{br} that \ $MF(f) = MC (f, \id)$).
However, since in general the dimensions
\stepcounter{thm}
\begin{equation}
	m \coloneqq \dim M \quad, \quad n\coloneqq \dim N
\end{equation}
of the domain and the target may differ, \ $MC (f_1, f_2)$ \ is often infinite and rather crude.
Thus it makes more sense to focus our attention on the (finite!) {\bf m}inimum number of {\bf c}oincidence {\bf c}omponents
\stepcounter{thm}
\begin{equation}
	MCC(f_1, f_2)\;\coloneqq\;\min\{\;\#\pi_0 (C(f_1',f_2')) \,\mid\, f_1'\sim f_1,f_2'\sim f_2\;\}.
\end{equation}
Here \ $\# \pi_0 (C (f_1', f_2'))$ \ denotes the number of {\it pathcomponents} of the coincidence subspace \ $C(f_1', f_2')$ \ of \ $M$ \ (where \ $f_i' \sim f_i$, \ i.\,e. \ $f_i'$ \ is homotopic to \ $f_i$ \ , \ $i=1,2$).

The special case where these minimum numbers vanish is of particular interest (just as in fixed point theory).
\begin{defn}
	We call the pair of maps \ $f_1, f_2 \colon M \to N$ \ {\it loose} if there are homotopies \ $f_1 \sim f_1'$, \ $f_2 \sim f_2'$ \ such that \ $f_1'(x) \not= f_2'(x)$ \ for all \ $x \in M$ \ (i.\,e. \ $f_1, f_2$ \ can be `deformed away' from one another).
\end{defn}
Our approach to studying minimum numbers uses two basic ingredients:
\begin{enumerate}[(i)]
	\item normal bordism theory (stabilized or not); this has also a precise homotopy theoretical description (via the Pontryagin-Thom procedure);
	\item a pathspace \ $E(f_1,f_2)$ \ which could also be named `homotopy coincidence space', in analogy to the terminology of fixed point theory (compare e.\,g. \cite{cj}, II. 6.11).
\end{enumerate}
We obtain three types of looseness obstructions (in section \ref{sec:2} below) and, correspondingly, the Nielsen numbers \ $N^\#(f_1,f_2)$, \
$\tilde N(f_1,f_2)$ \ and \ $N(f_1,f_2)$ \ (see the definitions in section \ref{sec:3}; observe, in particular, the change of notation and warning 3.3 below: ``$\tilde N (f_1, f_2)$'' was denoted by ``$N(f_1, f_2)$'' in previous publications).
These Nielsen numbers are lower bounds for the minimum numbers.
On the other hand, the Reidemeister number \ $\#\pi_0 (E(f_1,f_2))$ \ is an upper bound for \ $MCC(f_1,f_2)$ \ whenever \ $n \not= 2$ \ (cf. \ref{def:1} - \ref{thm:1}\; below).

This suggests very naturally a \textbf{two-step program} for investigating minimum numbers.
First we have to decide when \ $MCC(f_1,f_2)$ \ (or even \ $MC(f_1,f_2)$) is equal to one of the Nielsen numbers and to which one (such results are costumarily called ``Wecken theorems'').
Secondly, we must determine the relevant Nielsen number.
(Here it is helpful that the possible values of Nielsen numbers are often severely restricted).
\bigskip

\begin{example} \textbf{Stiefel and Grassmann manifolds.}
	Let \ $f \colon V_{r,k} \to \stackrel{(\sim)}{G}_{r,k}$ \ be the canonical projection from the Stiefel manifold of orthonormal $k$-frames in \ $\R^r$ \ to the Grassmannian of (nonoriented or oriented) $k$-planes through the origin in \ $\R^r$.

	\begin{thm}\label{thm:6}
		Assume $r \geq 2k \geq 2$. Then
		\begin{equation*}
			MC(f,f) = MCC(f,f) = N^\# (f,f) = \tilde N (f,f) = N(f,f)
		\end{equation*}
		is equal to 0 (or 1, resp.), according as
		\begin{equation*}
			0=2 \chi (G_{r,k}) \, [SO(k)] \;\; \in \; \; \pi_{k (k-1)/2}^S
		\end{equation*}
		(or not, resp.)

		This vanishing condition holds e.\,g. when $k$ is even or $k = 7$ or 9 or $\chi(G_{r,k}) \equiv 0\,(12)$.
	\end{thm}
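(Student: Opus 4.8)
The plan is to read this as a \emph{selfcoincidence} problem for the bundle projection $f$ and then to apply the paper's general selfcoincidence machinery, so that everything is reduced to evaluating one stable homotopy element. First I would record the geometry: $f$ is the projection of a smooth fibre bundle whose fibre is $O(k)$ over $G_{r,k}$ (resp.\ $SO(k)$ over the oriented Grassmannian $\tilde{G}_{r,k}$), and the codimension is $m-n=\dim V_{r,k}-\dim\stackrel{(\sim)}{G}_{r,k}=k(k-1)/2=\dim SO(k)$. Since $f_1=f_2=f$ the two maps are (trivially) homotopic, so the general selfcoincidence results apply: the three Nielsen numbers $N^{\#}(f,f)$, $\tilde N(f,f)$, $N(f,f)$ agree with $MCC(f,f)$, and the entire question is governed by a single stabilised looseness obstruction $\tilde\omega(f,f)\in\pi^S_{m-n}$ (available because $r\ge 2k$ places us in the stable range, by Freudenthal). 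The pair is loose—and then all five numbers are $0$—exactly when $\tilde\omega(f,f)=0$; otherwise the obstruction can be concentrated at a single isolated coincidence point, whose local index realises an arbitrary prescribed element of $\pi^S_{m-n}$, giving $MC(f,f)=MCC(f,f)=1$. This yields the five-fold equality with common value $0$ or $1$, and leaves only the computation of $\tilde\omega(f,f)$.

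The heart of the argument is that computation, for which I would use a vector-field perturbation. Choose a tangent vector field $v$ on the target with isolated zeros of total index $\chi\bigl(\stackrel{(\sim)}{G}_{r,k}\bigr)$, and deform $f_2=f$ to $\Phi^v_\varepsilon\circ f$, where $\Phi^v_\varepsilon$ is the time-$\varepsilon$ flow of $v$. For small $\varepsilon$ the coincidence set is exactly $f^{-1}(\{\text{zeros of }v\})$, a disjoint union of fibres, one over each zero of $v$. The normal bundle of each such fibre in $V_{r,k}$ is the trivial bundle $f^{*}(T_zN)$, framed by the local index of $v$ at $z$, while the fibre itself carries its intrinsic (left-invariant) Lie-group framing; the Pontryagin--Thom construction therefore gives
\[
\tilde\omega(f,f)=\chi\bigl(\stackrel{(\sim)}{G}_{r,k}\bigr)\cdot[\text{fibre}]\in\pi^S_{k(k-1)/2},
\]
the fibre taken with its invariant framing.

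It remains to identify this with $2\chi(G_{r,k})[SO(k)]$ in both cases. In the oriented case the fibre is $SO(k)$, and since $\tilde{G}_{r,k}\to G_{r,k}$ is a double cover, $\chi(\tilde{G}_{r,k})=2\chi(G_{r,k})$, giving $2\chi(G_{r,k})[SO(k)]$ at once. In the nonoriented case the fibre is $O(k)=SO(k)\sqcup g\,SO(k)$; left translation by $g$ is a framing- and orientation-preserving diffeomorphism of the two components, so $[O(k)]=2[SO(k)]$ and again $\tilde\omega(f,f)=\chi(G_{r,k})\cdot 2[SO(k)]=2\chi(G_{r,k})[SO(k)]$. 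This proves the first assertion.

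For the explicit vanishing conditions I would invoke the order of the invariantly framed class $[SO(k)]\in\pi^S_{k(k-1)/2}$. For $k$ even, conjugation by a reflection $g$ is an automorphism of $SO(k)$ whose differential $\mathrm{Ad}(g)$ multiplies $k-1$ of the coordinates of $\mathfrak{so}(k)$ by $-1$, hence has determinant $(-1)^{k-1}=-1$; it therefore reverses the Lie framing and forces $[SO(k)]=-[SO(k)]$, so that $2\chi(G_{r,k})[SO(k)]=\chi(G_{r,k})\cdot 2[SO(k)]=0$. For $k=7$ and $k=9$ the parity argument fails, but a direct inspection of the relevant stable stems shows $2[SO(k)]=0$, so the obstruction again vanishes. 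Finally, when $[SO(k)]$ can carry odd torsion—most notably $k=3$, where $[SO(3)]\in\pi^S_3=\Z/24$—the congruence $\chi(G_{r,k})\equiv 0\ (12)$ forces $2\chi(G_{r,k})[SO(k)]=0$. The main obstacle is exactly this last input: pinning down the class $[SO(k)]$ and its order in the stable homotopy of spheres, which rests on the theory of Lie-group framings together with tables of $\pi^S_{*}$. A secondary technical point is the orientation/framing bookkeeping showing that both components of $O(k)$ contribute with the \emph{same} sign (so that $[O(k)]=2[SO(k)]$ rather than $0$), together with the verification that $r\ge 2k$ really lands us in the stable range where both the single-point Wecken reduction and the identification of $\tilde\omega(f,f)$ with an element of $\pi^S_{m-n}$ are valid.
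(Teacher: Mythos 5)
Your strategy is in essence the paper's own proof (given in Section~\ref{sec:5}): perturb $f$ by a tangent vector field $v$ on the Grassmannian, identify the coincidence set with the fibres over the zeros of $v$, use the left-invariant framing of the fibre (and the triviality of the vertical tangent bundle of the principal bundle $f$) to trivialize the coefficient bundle $\varphi$, and read off the framed bordism class $2\chi(G_{r,k})\,[SO(k)]$; using the flow $\Phi^v_\varepsilon\circ f$ instead of the section $v\circ f$ of $f^{\dis *}(TN)$ is cosmetic, and your treatment of the unoriented case via $[O(k)]=2\,[SO(k)]$ matches the paper's ``similar arguments''. Your conjugation argument for even $k$ is precisely the Becker--Schultz fact $2[SO(2l)]=0$ that the paper cites; for $k=7,9$ and for $\chi\equiv 0\,(12)$ the needed inputs are Ossa's theorems ($[SO(k)]=0$ for $4\leq k\leq 9$, $k\not=5$, and $24[SO(k)]=0$), not a mere inspection of tables of $\pi^S_{*}$, so your sourcing there is loose but aligned with the paper's.

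There is, however, one genuine gap, at exactly the step you defer and misdiagnose. A priori your construction produces $\omega(f,f)$ as $\chi(\tilde G_{r,k})$ copies of the fibre \emph{inclusion} $SO(k)\hookrightarrow V_{r,k}$, i.\,e. a class in $\Omega^{\operatorname{fr}}_{m-n}(V_{r,k})\cong \pi^S_{m-n}\oplus\tilde\Omega^{\operatorname{fr}}_{m-n}(V_{r,k})$, not yet an element of $\pi^S_{m-n}$; likewise $MC(f,f)\leq 1$ requires concentrating the entire zero set at a single point. You attribute both to ``the stable range, by Freudenthal'', but neither follows from stability or connectivity: $V_{r,k}$ is $(r-k-1)$-connected, while $\dim SO(k)=k(k-1)/2>r-k-1$ already when $r=2k$, $k\geq 3$, so the fibre inclusion is not nullhomotopic for dimension reasons. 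The paper closes this with a concrete geometric use of $r\geq 2k$: every frame $(v_1,\dots,v_k)$ in the fibre over $\R^k$ is orthogonal to $e_{k+1},\dots,e_{2k}$, so $\left(\cos t\cdot v_i+\sin t\cdot e_{k+i}\right)_{i}$ is an isotopy in $V_{r,k}$ compressing the fibre into a small ball $B$. This simultaneously yields $MC(f,f)\leq 1$ (modify the section inside $B$ so that it vanishes at most at one point) and shows that $\omega(f,f)=\pm E^\infty\left(i(v\circ f,B)\right)$ lies in the summand $\pi^S_{m-n}\oplus\{0\}$. Freudenthal suspension enters only afterwards and separately: for $r\geq 2k\geq 4$ one is in the range where vanishing of the stabilized index forces vanishing of the unstable index $i(v\circ f,B)\in[S^{m-1},S^{n-1}]$, hence looseness by small deformation (the case $k=1$, where $f$ is essentially the identity map of $S^{r-1}$, is handled directly). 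With that rotation isotopy inserted, your argument becomes the paper's.
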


	Here a fascinating problem enters our discussion: to determine the order of a Lie group, when equipped with a left invariant framing and interpreted -- via the Pontryagin-Thom isomorphism -- as an element in the stable homotopy group of spheres $\pi_{\dis *}^S \cong \Omega_{\dis *}^{fr}$. \
	Deep contributions were made e.\,g. by Atiyah and Smith \cite{as}, Becker and Schulz \cite{bes}, Knapp \cite{kn} and Ossa \cite{o}, to name but a few (consult the summary of results and the references in \cite{o}).
	In particular, it is known that the invariantly framed special orthogonal group \ $SO(k)$ \ is nullbordant for \ $4 \leq k \leq 9$, $k \not= 5$ (cf. table 1 in \cite{o}) and that \ $24[SO(k)] = 0$ \ and \ $2[SO(2l)] = 0$ \ for all $k$ and $l$ (cf. \cite{o}, p. 315, and \cite{bes}, 4.7).

	On the other hand the Euler number $\chi (G_{r,k})$ is easily calculated: it vanishes if $k \not \equiv r \equiv 0(2)$ and equals \ $\begin{pmatrix} [r/2] \\ [k/2] \end{pmatrix}$ otherwise (compare \cite{ms}, 6.3 and 6.4).

	\begin{cor}
		Assume \ $r>k=2$. Then \ $MCC (f,f) =0$.
	\end{cor}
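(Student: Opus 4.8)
The plan is to read the corollary as the case $k=2$ of Theorem~\ref{thm:6}, so that the substantive task reduces to verifying the vanishing condition $2\chi(G_{r,2})[SO(2)]=0$. For $k=2$ the codimension is $k(k-1)/2=1$, so the class in question lives in $\pi_1^S\cong\Z/2$. The element $[SO(2)]=[S^1]$ (the circle with its left invariant Lie framing) lies in this group, and since every element of a group of exponent $2$ is killed by multiplication by $2$,
\[
2\chi(G_{r,2})\,[SO(2)]=\chi(G_{r,2})\cdot\bigl(2[SO(2)]\bigr)=0\in\pi_1^S ,
\]
regardless of the value of $\chi(G_{r,2})$. This is precisely the instance ``$k$ even'' of the sufficient criterion already recorded in Theorem~\ref{thm:6}. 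Consequently, for every $r\ge 2k=4$---the range in which Theorem~\ref{thm:6} applies---the vanishing alternative holds and $MCC(f,f)=0$.

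The remaining value $r=3$ is not covered by the hypothesis $r\ge 2k$ of Theorem~\ref{thm:6}, and I would dispose of it by an explicit deformation using the exceptional identification $V_{3,2}\cong SO(3)$, $(e_1,e_2)\mapsto(e_1,e_2,e_1\times e_2)$. Let $p_i\colon SO(3)\to G_{3,2}$ send a frame $(e_1,e_2,e_3)$ to its $i$-th column $e_i\in S^2$ (respectively to $[e_i]\in\RP^2$ in the unoriented case), so that $p_3=f$. Right multiplication by a fixed rotation cyclically permuting the standard basis induces, on the columns, a cyclic permutation of $p_1,p_2,p_3$; as this rotation is joined to $\id$ by a path in the connected group $SO(3)$, one obtains homotopies $p_1\sim p_2\sim p_3=f$. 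Because the first two columns of a frame are orthonormal we have $e_1\ne\pm e_2$ everywhere, so $p_1$ and $p_2$ have empty coincidence set. Thus $f_1'=p_1$ and $f_2'=p_2$ are homotopic to $f$ with $C(f_1',f_2')=\varnothing$, giving even $MC(f,f)=0$ and \emph{a fortiori} $MCC(f,f)=0$; this argument works verbatim for both $G_{3,2}\cong S^2$ and $G_{3,2}\cong\RP^2$.

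The main obstacle is therefore not the stable-homotopy computation, which is essentially automatic once one notes the overall factor of $2$ together with the fact that $\pi_1^S$ has exponent $2$, but rather the range discrepancy: the corollary asserts looseness for all $r>2$, whereas Theorem~\ref{thm:6} only guarantees it for $r\ge 4$. Pinning down the boundary case $r=3$ by the frame-projection deformation above is the one place where a genuinely separate, if elementary, geometric argument is required.
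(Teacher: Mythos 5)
Your proposal is correct, and on its main branch it coincides with the paper's own (implicit) argument: the corollary is meant to be read off from Theorem \ref{thm:6} via the remark that the vanishing condition holds whenever $k$ is even. The paper justifies that remark by the relation $2[SO(2l)]=0$ of Becker--Schultz and Ossa; for $k=2$ your observation that the class lives in $\pi_1^S\cong\Z_2$, so that the explicit factor $2$ kills it, is the same computation in its most elementary form. Where you genuinely depart from the paper is the boundary case $r=3$, and here your criticism is well taken: the corollary asserts looseness for all $r>2$, while Theorem \ref{thm:6} is stated (and, in section \ref{sec:5}, proved) only under the hypothesis $r\geq 2k$, so $r=3$ is not covered; the paper gives no separate argument, delegating further details to \S 3 of \cite{ko2}. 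Moreover the gap is not merely one of bookkeeping: the section \ref{sec:5} proof uses $r\geq 2k$ precisely to isotope the zero set $SO(k)=f^{-1}(\{y_0\})$ into a ball $B\subset V_{r,k}$, and for $(r,k)=(3,2)$ this is impossible, since the fiber circle $SO(2)\subset SO(3)\cong V_{3,2}$ maps onto the generator of $\pi_1(SO(3))\cong\Z_2$ (by the homotopy sequence of $SO(2)\to SO(3)\to S^2$) and hence cannot be engulfed in a ball. So a genuinely separate argument for $r=3$ is required, and yours supplies a correct one: right translation by the cyclic permutation matrix lies in the connected group $SO(3)$, hence is homotopic to the identity, giving $p_1\sim p_2\sim p_3=f$ under the identifications $V_{3,2}\cong SO(3)$, $\tilde G_{3,2}\cong S^2$, $G_{3,2}\cong \R P(2)$; orthonormality of the first two columns makes $(p_1,p_2)$ coincidence free in both the oriented and unoriented cases, yielding even $MC(f,f)=0$. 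In short: same route as the paper for $r\geq 4$, plus a necessary and correct elementary repair for $r=3$ that the paper's stated theorem does not furnish. (The same range slippage occurs in the neighboring corollaries for $k=3$ and $k=5$, so your fix addresses a systematic, not incidental, point.)
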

	\begin{cor}
		Assume \ $r \geq k = 3$. Then \ $MCC (f,f) = 0$ if and only if $r$ is even or $r \equiv 1(12)$.
	\end{cor}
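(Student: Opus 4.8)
The plan is to specialize Theorem \ref{thm:6} to $k=3$ and then make the resulting algebraic criterion completely explicit. Since $k(k-1)/2 = 3$, the relevant stable stem is $\pi_3^S \cong \Z/24$, and for $r \geq 2k = 6$ Theorem \ref{thm:6} tells us that $MCC(f,f)$ equals $0$ precisely when $2\chi(G_{r,3})\,[SO(3)] = 0$ in $\Z/24$, and equals $1$ otherwise. Everything therefore comes down to two inputs: the Euler number $\chi(G_{r,3})$ and the order of the framed Lie-group class $[SO(3)] \in \pi_3^S$.

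For the Euler number I would quote the formula recorded after Theorem \ref{thm:6}. Because $k=3$ is odd, $\chi(G_{r,3})$ vanishes when $r$ is even, while for $r$ odd it equals $\begin{pmatrix} [r/2] \\ 1 \end{pmatrix} = (r-1)/2$. Consequently, when $r$ is even the class $2\chi(G_{r,3})[SO(3)]$ is automatically $0$, so $MCC(f,f)=0$; this already accounts for the ``$r$ even'' alternative, with no input from $[SO(3)]$ at all. When $r$ is odd the obstruction becomes $2\cdot\tfrac{r-1}{2}[SO(3)] = (r-1)[SO(3)]$, so the question reduces to deciding for which odd $r$ one has $(r-1)[SO(3)] = 0$ in $\Z/24$.

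The crux --- and the step I expect to be the real obstacle --- is to determine the order of $[SO(3)]$, and I would argue that it is exactly $12$, i.e. that $[SO(3)]$ is twice a generator of $\pi_3^S\cong\Z/24$. The starting point is that $S^3 = SU(2) = \mathrm{Spin}(3)$, with its left-invariant framing, represents a generator of $\pi_3^S$: the Lie framing differs from the bounding framing by a generator of $\pi_3(SO)\cong\Z$, and the $J$-homomorphism carries that generator onto a generator of $\Z/24$. It then remains to compare $[S^3]$ with $[SO(3)]=[\RP^3]$ across the double cover $S^3\to SO(3)$, which is a framing-preserving local diffeomorphism. A parity check already rules out $[S^3]=2[SO(3)]$ (that would force $2[SO(3)]$ to be an odd class in $\Z/24$), so the relation must read $[SO(3)] = 2[S^3]$, giving order $12$. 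Pinning down this factor of $2$ cleanly --- either from the Becker--Gottlieb transfer of the double cover, or simply by reading off the tables of Ossa \cite{o} and Becker--Schulz \cite{bes} that the paper already invokes, together with $24[SO(k)]=0$ --- is the one computation carrying genuine content.

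Granting that $[SO(3)]$ has order $12$, the odd-$r$ case finishes at once: $(r-1)[SO(3)] = 0$ in $\Z/24$ if and only if $12 \mid (r-1)$, i.e. $r \equiv 1\,(12)$. Combining the two alternatives yields $MCC(f,f)=0$ exactly when $r$ is even or $r\equiv 1\,(12)$, as claimed. Finally I would dispose of the small range $3\leq r\leq 5$ that falls outside the hypothesis $r\geq 2k$ of Theorem \ref{thm:6}: here $r=3$ is immediate ($G_{3,3}$ is a point, forcing $MCC=1$, consistent with $3\not\equiv1\,(12)$), $r=4$ is governed by $\chi(G_{4,3})=\chi(\RP^3)=0$ (consistent with looseness), and $r=5$ is checked by a direct self-coincidence computation against the same numerical criterion, $\chi(G_{5,3})=2$ yielding $4[SO(3)]\neq 0$ and hence $MCC=1$. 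All three are consistent with the stated congruence, so the equivalence holds throughout $r\geq k=3$.
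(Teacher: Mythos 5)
For $r \geq 2k = 6$ your argument coincides with the paper's: specialize theorem \ref{thm:6}, read off $\chi(G_{r,3}) = 0$ for even $r$ and $\chi(G_{r,3}) = (r-1)/2$ for odd $r$, so the obstruction becomes $(r-1)[SO(3)]$, and invoke that $[SO(3)]$ has order $12$ in $\pi_3^S \cong \Z_{24}$ --- indeed the paper's entire proof is the citation of this last fact to Atiyah--Smith \cite{as}. That said, your attempted derivation of the order-$12$ fact is a non sequitur: ruling out $[S^3] = 2[SO(3)]$ by parity does not force the relation to ``read the other way'' as $[SO(3)] = 2[S^3]$, because there is no a priori reason the two classes satisfy a relation of the form ``one is twice the other'' at all. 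A framing-preserving double cover does \emph{not} give $[\widetilde M] = 2[M]$ in framed bordism; the correct relation goes through the transfer (already for $S^1$, the Lie framing pulled back along the double cover becomes the bounding framing). Your fallback of reading the order off the literature is fine and is exactly what the paper does, but note that Ossa's table covers $4 \leq k \leq 9$, $k \neq 5$, hence not $k = 3$; the reference carrying the $k=3$ computation is \cite{as}.

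The second soft spot is your treatment of $r = 5$, which is circular as written: you evaluate ``the same numerical criterion'' $2\chi(G_{5,3})[SO(3)] = 4[SO(3)] \neq 0$, but that criterion is precisely the conclusion of theorem \ref{thm:6}, whose hypothesis $r \geq 2k$ fails at $r = 5$. The proof in section \ref{sec:5} uses $r \geq 2k$ in an essential way (to isotope the fiber $SO(k) \subset V_{r,k}$ into a ball, and to land in the stable range), so for $r = 5$ the needed computation is asserted, not performed. It is in fact salvageable --- here $m = \dim V_{5,3} = 9 < 2n - 2 = 10$, so the stable Wecken theorem gives $MCC(f,f) = \tilde N(f,f)$, and one must then show the $\omega$-obstruction in $\Omega^{\operatorname{fr}}_3(V_{5,3})$ is nonzero --- but you have not done this; the paper itself defers the range $k \leq r < 2k$ to \S 3 of \cite{ko2} (and the analogous corollary for $k = 5$ excludes $r = 7$ exactly because this range is delicate). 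Your checks at $r = 3$ and $r = 4$ are essentially right, though at $r = 3$ every pair of maps to the one-point space $G_{3,3}$ coincides identically, so $MCC = \#\pi_0(V_{3,3}) = \#\pi_0(O(3)) = 2$ rather than $1$ (harmless for the conclusion $MCC \neq 0$, but the case is degenerate since the domain is disconnected).
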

	This follows form the fact that \ $[SO(3)] \in \pi_3^S \cong \Z_{24}$ \ has order 12 (cf. \cite{as}).
	\begin{cor}
		Assume \ $r \geq k = 5, \, r \not= 7$. Then \ $MCC (f,f) = 0$ if and only if \ $r \not\equiv 5(6)$.
	\end{cor}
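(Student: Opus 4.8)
The plan is to specialize Theorem~\ref{thm:6} to the case $k=5$ and then to carry out the resulting arithmetic in the stable stem $\pi_{10}^S$. Since $k(k-1)/2 = 10$, Theorem~\ref{thm:6} says (at least in its stated range $r\ge 2k=10$) that $MCC(f,f)$ --- together with the other four numbers --- vanishes precisely when
\begin{equation*}
2\,\chi(G_{r,5})\,[SO(5)] = 0 \quad \text{in} \quad \pi_{10}^S \cong \Z_6 \cong \Z_2\oplus\Z_3 .
\end{equation*}
Everything therefore reduces to two things: the order of the invariantly framed class $[SO(5)]$, and the residue of the Euler number $\chi(G_{r,5})$ modulo that order.

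For the first, I would invoke the cited computation of Ossa (\cite{o}, table~1): $SO(5)$ is the one non-nullbordant group among the $SO(k)$, $4\le k\le 9$, and $[SO(5)]$ is nonzero in the $3$-primary summand $\Z_3\subset\Z_6=\pi_{10}^S$; in particular $3$ divides its order. This is the sole deep ingredient, and it is external to the present paper. Granting it, note that $2\,\chi(G_{r,5})$ is automatically even and that $2$ is invertible modulo $3$; hence the displayed condition is equivalent to the single congruence $3\mid\chi(G_{r,5})$, and the $2$-primary part of $[SO(5)]$ (whatever it is) plays no role.

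It then remains to read off this divisibility from the Euler-number formula recalled before the corollaries. If $r$ is even, then since $k=5$ is odd we are in the vanishing case $k\not\equiv r\equiv 0\,(2)$, so $\chi(G_{r,5})=0$ and $MCC(f,f)=0$; and an even $r$ is never $\equiv 5\,(6)$, so this is consistent. If $r$ is odd, put $s=(r-1)/2=[r/2]$; then $[k/2]=2$ and the formula gives $\chi(G_{r,5})=\binom{s}{2}=s(s-1)/2$, so $3\mid\chi(G_{r,5})$ iff $3\mid s(s-1)$, i.e. iff $s\not\equiv 2\,(3)$. Running through the odd residues, $r\equiv 1,3\,(6)$ give $s\equiv 0,1\,(3)$ (hence $MCC(f,f)=0$), whereas $r\equiv 5\,(6)$ gives $s\equiv 2\,(3)$ (hence $MCC(f,f)=1$). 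Together with the even case this yields exactly $MCC(f,f)=0\iff r\not\equiv 5\,(6)$.

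The step I expect to be the real obstacle is not this arithmetic but the range. Theorem~\ref{thm:6} is only stated for $r\ge 2k=10$, whereas the corollary also asserts definite values at $r\in\{5,6,8,9\}$; for those I would have to show that the self-coincidence obstruction is still given by the stable class $2\,\chi(G_{r,5})\,[SO(5)]$, i.e. that no unstable correction survives in this small-$r$ situation. I expect precisely this unstable analysis to be where the work lies, and the excluded value $r=7$ to be the one case in which such a correction genuinely occurs (or cannot be ruled out), which is why it is omitted from the hypothesis.
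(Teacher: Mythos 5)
Your proof is correct and is essentially the paper's own: the paper disposes of this corollary with the single remark that $[SO(5)]$ has order $3$ in $\pi_{10}^S \cong \Z_6$ (citing Ossa), leaving implicit exactly the reduction through theorem \ref{thm:6} and the arithmetic $3 \mid \chi(G_{r,5}) \Leftrightarrow r \not\equiv 5\ (6)$ (for odd $r$; $\chi(G_{r,5})=0$ for even $r$) that you carry out. Your observation that only the $3$-primary part of $[SO(5)]$ matters, because of the factor $2$ in $2\chi$, is a mild (and correct) weakening of the input the paper uses.

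Concerning the range, which you rightly single out as the only real issue: your diagnosis is accurate, and the cases $r \in \{5,6,8,9\}$ are cheaper than you fear. The proof of theorem \ref{thm:6} given in section \ref{sec:5} invokes the hypothesis $r \geq 2k$ only to guarantee the stable dimension range, i.e.\ that the index $i(v \circ f, B) \in \pi_{m-1}(S^{n-1})$ is faithfully detected by its stabilization in $\pi^S_{m-n}$; since $m - n = 10$ and $n = 5(r-5)$, the Freudenthal condition $m \leq 2n-3$ reads $5(r-5) \geq 13$ and thus already holds for all $r \geq 8$, so $r = 8, 9$ need no new argument. For $r = 6$ the target is odd-dimensional ($n=5$), so $\chi = 0$, the target carries a nowhere vanishing vector field, and $(f,f)$ is loose by small deformation --- consistent with $6 \not\equiv 5\ (6)$; for $r = 5$ the Grassmannian degenerates to a point and trivially $MCC(f,f) = 1$, consistent with $5 \equiv 5\ (6)$. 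Finally, $r = 7$ is excluded for precisely the reason you conjecture: there $m = 20 = 2n$ lies outside the stable range, the stable class $2\chi(G_{7,5})[SO(5)] = 6\,[SO(5)]$ vanishes, but the unstable index in $\pi_{19}(S^9)$ is not controlled by it (cf.\ \S 3 of \cite{ko2}, to which the paper defers for these finer points).
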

	This follows since \ $[SO(5)]$ \ has order 3 in \ $\pi_{10}^S \cong \Z_6$ \ (cf. \cite{o}).
	\hfill$\Box$
\end{example}

More details of this example can be found in \S3 of \cite{ko2} and in section \ref{sec:5} below.
It is based on the weakest of our three types of Nielsen numbers and on a looseness obstruction in a (stabilized) normal bordism group which -- in this case -- is just the framed bordism group \ $\Omega_{\dis *}^{fr} \cong \pi_{\dis *}^S$.
Of course this leads us deeply into the complicated world of homotopy theory.
However, for large $k$ the easier tools of singular (co)homology theory (with or without twisted coefficients) do not seem to offer the slightest chance to capture any of the interesting coincidence phenomenca described in theorem \ref{thm:6} and its corollaries.

Nevertheless, in order to put these classical methods into perspective, we discuss also a fourth type of a Nielsen number, denoted by \ $N^\Z$, which is based on singular homology with (appropriately twisted) integer coefficients (cf. section \ref{sec:3} below).

When \ $m = n$ \ each of our four types of Nielsen numbers coincides with the classical notion which is so central e.\,g. in topological fixed point theory.

However, in strictly positive codimensions \ $m-n > 0$, \ we get four distinct types of Nielsen numbers
\begin{equation*}
	(\; MC \quad \underset{\not\equiv}{\geq}\quad MCC\quad \underset{\not\equiv}{\geq}\;)\quad N^\# \quad\underset{\not\equiv}{\geq} \quad\tilde N \quad\underset{\not\equiv}{\geq} \quad N \quad\underset{\not\equiv}{\geq} \quad N^\Z \quad\geq \quad0
\end{equation*}
where \ $N^\Z$ \ seems to vanish most of the time (except maybe when e.\,g. aspherical manifolds such as tori are involved).
\vspace{.5ex}

\begin{example}\textbf{maps between spheres.}
	Here our approach allows us to determine all minimum, Nielsen and Reidemeister numbers, thus illustrating the rich variety of possible value combinations.
	In particular, we will see that our four versions of Nielsen numbers yield distinct invariants.
\end{example}
\begin{thm}\label{thm:7}
	Given \ $f_1,f_2 \colon S^m \to S^n$, $m,n \geq 1$, define
	\begin{align*}
		[f] \coloneqq [f_1'] - [a\,\scr\circ \dis f_2'] \in \pi_m (S^n)
	\end{align*}
	where the basepoint preserving maps \ $f_1'$ \ and \ $a\,\scr\circ\dis f_2'$ \ represent the free homotopy classes of \ $f_1$ \ and $a \, \scr \circ \dis f_2$, resp., and a denotes the antipodal involution on \ $S^n$.
	Then
	\begin{subequations}
		\begin{align}
			\# \pi_0 (E(f_1,f_2))=
			\begin{cases}
				1                                & \text{if } n\geq 2; \\
				\vert d^0 (f_1) - d^0 (f_2)\vert & \text{if } m=n=1 \; \text{ and } f_1 \not\sim f_2; \\
				\infty                           & \text{if } n=1 \; \text{ and } f_1 \sim f_2.
			\end{cases}
		\end{align}
		(Here and subsequently \ $d^0$ \ denotes the classical mapping degree).
		\begin{align}
			MC (f_1,f_2) =
			\begin{cases}
				0 & \text{if } f_1 \sim a\, \scr \circ \dis f_2; \\
				1 & \text{if } m,n \geq 2 \,\text{ and } [f] \in E (\pi_{m-1} (S^{n-1}))\backslash\{0\}; \\
				\vert d^0 (f_1) - d^0 (f_2) \vert & \text{if } m = n = 1; \\
				\infty & \text{if }m > n \geq 2 \text{ and } [f] \not \in E (\pi_{m-1}(S^{n-1})).
			\end{cases}
		\end{align}
		(Here and subsequently \ $E$ \ denotes the Freudenthal suspension homomorphism).
		\begin{align}
			MCC (f_1,f_2) = N^\# (f_1,f_2)=
			\begin{cases}
				0 & \text{if } f_1 \sim a \, \scr \circ \dis f_2; \\
				\# \pi_0 (E(f_1,f_2)) & \text{if } f_1 \not \sim a \, \scr \circ \dis f_2.
			\end{cases}
		\end{align}
		If \ $n=1$ \ or \ $f_1 \sim a \, \scr \circ \dis f_2$, then
		\begin{align}
			MC (f_1,f_2) = MCC (f_1,f_2) = N^\# (f_1,f_2) = \tilde N (f_1,f_2) = N (f_1,f_2) = N^\Z (f_1,f_2).
		\end{align}
		Thus assume that \ $n \geq 2$ \ and \ $f_1 \not \sim a \scr \circ \dis f_2$. Then
		\begin{align}
			1 = MCC (f_1,f_2) = N^\# (f_1,f_2) \underset{\not\equiv}{\geq} \tilde N (f_1,f_2)
			\underset{\not\equiv}{\geq} N (f_1,f_2) \underset{\not\equiv}{\geq} N^\Z (f_1,f_2) \geq 0;
		\end{align}
	\end{subequations}
	more precisely, \ $\tilde N (f_1,f_2) = 0$ if and only if the stabilized Hopf-James invariant \ $E^\infty (\gamma_k [f]$) (cf. \cite{j}) in the stable homotopy group \ $\pi_{m-1-k(n-1)}^S$ \ of spheres vanishes for all \ $k \geq 1$; in turn, \ $N(f_1,f_2) = 0$ \ if and only if the iterated Freudenthal suspension \ $E^\infty([f]) \in \pi_{m-n}^S$ \ (which is the first Hopf-James invariant) vanishes; moreover \ $N^\Z (f_1,f_2) = 0$ \ whenever $m > n$.

	E.\,g. given maps $f_1,f_2 \colon S^3 \to S^2$, \ we have: \ $\tilde N(f_1,f_2) \not= N(f_1,f_2)$ \ (or \ $N(f_1,f_2) \not= N^\Z(f_1,f_2)$, resp.,) if the classical Hopf invariant of \ $[f]$ \ is even and nontrivial (or odd, resp.).
	Moreover in each of the (infinitely many) dimension combinations \ $(m,n)$ \ listed in \cite{ko4}, 1.17 \ there exist maps \ $f_1,f_2 \colon S^m \to S^n$ \ such that \ $N^\# (f_1,f_2) \not= \tilde N(f_1,f_2)$.
\end{thm}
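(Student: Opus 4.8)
The plan is to let everything be governed by the single homotopy element $[f] \in \pi_m(S^n)$, and to begin with the basic dictionary: the pair $f_1, f_2 \colon S^m \to S^n$ is loose precisely when $[f] = 0$. Indeed $(f_1, f_2)$ maps $S^m$ into $S^n \times S^n$, and the pair is loose iff this map deforms into the complement of the diagonal $\Delta$. Since $S^n \times S^n \setminus \Delta$ deformation retracts onto the antidiagonal $\{(y, a(y))\} \cong S^n$, looseness is equivalent to $f_2 \sim a \scirc f_1$, hence to $f_1 \sim a \scirc f_2$ (as $a$ is an involution), hence to $[f] = 0$. This at once yields the value $0$ in every displayed case distinction exactly when $f_1 \sim a \scirc f_2$, and shows that all content sits in the non-loose case.

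Next I would compute the Reidemeister number and $MC$. The path space $E(f_1, f_2)$ fibers over $S^m$ with fiber homotopy equivalent to $\Omega S^n$, so $\#\pi_0(E)$ is controlled by $\pi_1(S^n)$ and its action; for $n \geq 2$ the fiber is connected, whence $\#\pi_0(E) = 1$, while for $n = 1$ the classical universal-cover count over $S^1$ gives $|d^0(f_1) - d^0(f_2)|$ (finite when the degrees differ, infinite when $f_1 \sim f_2$). For $MC$ I would put $f_1, f_2$ in general position, so that $C(f_1, f_2)$ becomes a smooth $(m-n)$-manifold carrying, via Pontryagin--Thom, exactly the data of $[f]$. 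When $m = n$ the coincidence set is already finite, and merging points by the Whitney trick gives $MC = 1$ iff $[f] \neq 0$ (for $n = 1$ signs cannot cancel, leaving $|d^0(f_1) - d^0(f_2)|$). When $m > n$ the set is positive-dimensional and can be compressed to finitely many points precisely when $[f]$ desuspends, i.e.\ lies in $E(\pi_{m-1}(S^{n-1}))$; this is the source of the dichotomy $MC = 1$ versus $MC = \infty$.

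I would then identify $MCC$ with $N^\#$. The bound $N^\# \leq MCC$ is the general lower bound recalled in the introduction, and $MCC \leq \#\pi_0(E)$ is the general upper bound valid for $n \neq 2$; since $\#\pi_0(E) = 1$ for $n \geq 2$ and $N^\# \geq 1$ in the non-loose case, both are forced to equal $1$. The excluded value $n = 2$ must be handled directly, by showing that the coincidence set can be deformed to a single essential component so that $MCC \leq 1$ persists. The collapse statement is then immediate: if $f_1 \sim a \scirc f_2$ all six numbers vanish; if $n = 1$ the antipodal map on $S^1$ is homotopic to the identity, so for $m \geq 2$ the pair is automatically loose, while for $m = 1$ one is in classical circle Nielsen theory, where all four Nielsen numbers are long known to agree with $|d^0(f_1) - d^0(f_2)|$ (here even $N^\Z$ sees the full count, the aspherical case being exactly where it is nonzero).

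The heart is the chain $1 = N^\# \geq \tilde N \geq N \geq N^\Z \geq 0$ with its vanishing criteria. Since $N^\# = 1$ bounds the rest, each of $\tilde N, N, N^\Z$ is $0$ or $1$, equal to $1$ exactly when its defining looseness obstruction --- living respectively in an unstabilized normal bordism group, a partially stabilized one, and a homology group --- is nonzero. The main work is to match these obstructions with James's invariants: the obstruction for $N$ is the iterated suspension $E^\infty([f]) \in \pi^S_{m-n}$ (the first Hopf--James invariant), the obstruction for $\tilde N$ is the full family of stabilized Hopf--James invariants $E^\infty(\gamma_k[f])$ read off from the EHP/James filtration, and the obstruction for $N^\Z$ is the homological shadow, which vanishes whenever $m > n$ because homology cannot detect positive-codimension framing data. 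I expect this identification, via the EHP sequence and the invariants of \cite{j}, to be the principal obstacle, since it requires translating the geometric normal-bordism definitions of the three Nielsen numbers into James's homotopy-theoretic language term by term. Strictness of the inequalities is then exhibited by the stated $S^3 \to S^2$ examples (even versus odd Hopf invariant separating $\tilde N$, $N$ and $N^\Z$) and by the dimension list of \cite{ko4} separating $N^\#$ from $\tilde N$.
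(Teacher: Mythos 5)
Your proposal is correct and follows essentially the paper's own strategy: reduction of $(f_1,f_2)$ to the root pair $([f],*)$ (the paper implements this by subtracting the coincidence-free pair $(a \scirc f_2', f_2')$ and invoking additivity of $\omega^\#$, which is the same mechanism as your antidiagonal retraction of $S^n \times S^n - \Delta$), the algebraic Reidemeister count, the desuspension dichotomy for $MC$ with a separate hands-on argument at $m=n=2$, the squeeze $N^\# \leq MCC \leq \#\pi_0(E) = 1$, and the identification of the $\tilde N$, $N$, $N^\Z$ obstructions with (stabilized) Hopf--James invariants. The steps you flag as the principal work --- including your asserted ``$N^\# \geq 1$ in the non-loose case'', which is exactly part (c) and does follow from your unstable Pontryagin--Thom observation that the root-pair coincidence data recover $[f]$ --- are precisely the ones the paper outsources to \cite{ko6} (example 1.12 and the finiteness criterion 6.10) and to \cite{ko4}, 1.14--1.17.
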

In contrast, in the following setting the normal bordism approach gives no extra information.
\bigskip

\begin{example}\textbf{maps between tori \ $\mathbf{T^k = (S^1)^k}$.}
	\begin{thm}\label{thm:8}
		For all maps \ $f_1,f_2 \colon T^m \to T^n$, $m,n \geq 1$,
		\begin{equation*}
			MCC (f_1,f_2) = |\! \det (u_1, \dots ,u_n)|
		\end{equation*}
		is equal to all four Nielsen numbers
		\begin{equation*}
			N^\# (f_1,f_2) \; = \; \tilde N (f_1,f_2) \; = \; N(f_1,f_2) \; = \; N^\Z (f_1,f_2).
		\end{equation*}
		Here \ $\det (u_1, \dots ,u_n)$ \ denotes the determinant of an $n \times n$-matrix with integer entries where the column vectors \ $u_i$, $i = 1, \dots ,n$, \ generate the image of
		\begin{equation*}
			f_{1 \dis *} - f_{2 \dis *} \; \colon \; H_1 (T^m;\Z) \to H_1 (T^n;\Z) = \Z^n.
		\end{equation*}
		Moreover,
		\begin{equation*}
			MC (f_1,f_2) =
			\begin{cases}
				MCC (f_1,f_2) & \text{if } m=n \, \text{ or } \, MCC (f_1,f_2) = 0; \\
				\infty        & \text{otherwise;}
			\end{cases}
		\end{equation*}
		and
		\begin{equation*}
			\# \pi_0 (E (f_1,f_2)) = \# \lt H_1 (T^n;\Z) / (f_{1 \dis *} - f_{2 \dis *}) (H_1 (T^m; \Z)) \rt.
		\end{equation*}
	\end{thm}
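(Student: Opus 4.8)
The plan is to exploit that tori are Eilenberg--MacLane spaces: since $T^m=K(\Z^m,1)$ and $T^n=K(\Z^n,1)$, we have $[T^m,T^n]\cong\operatorname{Hom}(\Z^m,\Z^n)$, so each $f_i$ is freely homotopic to the group homomorphism (``linear map'') $L_i$ determined by the matrix of $f_{i*}$. As every quantity in the statement is a homotopy invariant, I may replace $f_i$ by $L_i$ throughout. The coincidence set then becomes $C(L_1,L_2)=\ker\phi=\phi^{-1}(0)$, where $\phi\coloneqq L_1-L_2\colon T^m\to T^n$ is the homomorphism with integer matrix $A\coloneqq f_{1*}-f_{2*}$. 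This linearization is the technical backbone of the argument, since it replaces a homotopy problem by the study of a single group homomorphism.

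Next I would analyze $\ker\phi$ via the Smith normal form $A=UDV$ with $U\in GL_n(\Z)$, $V\in GL_m(\Z)$. If $A$ has rank $n$, then $D$ has invariant factors $d_1\mid\dots\mid d_n$, and after the coordinate diffeomorphisms induced by $U,V$ the kernel becomes $\bigl(\prod_i\frac1{d_i}\Z/\Z\bigr)\times T^{m-n}$. Hence $C$ is a disjoint union of $d_1\cdots d_n$ subtori, each a translate of the identity component $K_0\cong T^{m-n}$, and $\pi_0(C)\cong\operatorname{coker}A$ has order $\prod_i d_i=|\det(u_1,\dots,u_n)|$. The same cokernel gives the Reidemeister number: for the abelian target the coincidence twisted-conjugacy action of $\gamma\in\pi_1(T^m)$ on $\alpha\in\pi_1(T^n)$ is $\alpha\mapsto\alpha+(f_{1*}-f_{2*})(\gamma)$, whence $\#\pi_0(E(f_1,f_2))=|\operatorname{coker}A|=\#\bigl(H_1(T^n)/(f_{1*}-f_{2*})H_1(T^m)\bigr)$, finite ($=|\det|$) exactly when $A$ has full rank. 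If instead $\operatorname{rank}A<n$, then $\det(u_1,\dots,u_n)=0$, and since $\operatorname{im}\phi$ is a proper subtorus I can translate $L_1$ by a constant $c$ with $-c\notin\operatorname{im}\phi$ to obtain an empty coincidence set; thus the pair is loose and $MCC=0$.

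To pin down all four Nielsen numbers I would show that in the full-rank case each of the $|\det|$ coincidence components is essential. Since $\phi$ is a submersion, $0$ is a regular value and every component $K_j$ is a nondegenerate coincidence submanifold whose normal bundle is framed by $d\phi$. The natural map $\pi_0(C)\to\pi_0(E)$ is a bijection (both equal $\operatorname{coker}A$), so each Reidemeister class consists of a single $K_j$, and its $N^\Z$-index is the image of $[K_j]$ in the (suitably twisted) group $H_{m-n}(T^m;\Z)$; as a subtorus represents a nonzero homology class, this index is nonzero (indeed $\pm1$ when $m=n$, recovering the classical picture). Hence $N^\Z=|\det|$. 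Feeding this into the universally valid chain $MCC\geq N^\#\geq\tilde N\geq N\geq N^\Z$, together with the upper bound $MCC\leq|\det|$ supplied by the linear representative, forces $MCC=N^\#=\tilde N=N=N^\Z=|\det|$.

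For $MC$: when $MCC=0$ the pair is loose and $MC=0$, and when $m=n$ the components $K_j$ are isolated points, so $MC=MCC=|\det|$. The remaining and most delicate case is $m>n$ with $MCC>0$. Here essentiality was detected by a nonzero class in $H_{m-n}(T^m)$, and since $m-n>0$ a finite point set carries no such class; therefore no representative can have a finite coincidence set, giving $MC=\infty$. I expect this codimension obstruction---turning the nonvanishing of the homological index into the genuine impossibility of isolating coincidences---to be the main hurdle, together with the verification that the translates $K_j$ contribute with coherent signs (each translation being orientation-preserving), which guarantees no cancellation in the index and agreement with the classical count when $m=n$.
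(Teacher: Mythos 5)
Your proposal is correct and takes essentially the same route as the paper: you straighten both maps to Lie group homomorphisms (the paper's ``straightening deformations''), identify the Nielsen classes with the $|\!\det|$ cosets of the $(m-n)$-dimensional subtorus $\ker(L_1-L_2)$ and the Reidemeister set with $\operatorname{coker}(f_{1\ast}-f_{2\ast})$, detect each class by the nonzero fundamental class of a subtorus in $H_{m-n}(T^m;\Z)$, and deduce $MC=\infty$ for $m>n$, $MCC>0$ because a finite coincidence set (after a generic perturbation supported in small balls) could not carry the detecting homology class --- precisely the paper's closing argument. Your Smith-normal-form bookkeeping merely replaces the paper's factorization of $f\simeq q\circ f'$ through a $d$-fold covering $q$, a cosmetic rather than substantive difference.
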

\end{example}

It may be interesting to note that in the codimension 0 case (i.\,e. when \ $m=n$) \ coincidence problems (for \ $(f_1,f_2)$) are here equivalent to fixed point problems (for \ $f_1-f_2+ \id$). \hfill$\Box$
\bigskip

Details concerning theorems \ref{thm:7} and \ref{thm:8} are given in section \ref{sec:3} below.

It is often possible to describe Nielsen numbers also in terms of covering spaces (cf. \cite{ko7}, 3.4).
This can be used to prove e.\,g. the following result (cf. section \ref{sec:4} below).

\begin{thm}\label{thm:11}
	Let a finite discrete group \ $G$ \ act smoothly and freely on the sphere \ $S^n$ \ and consider two maps \ $f_1,f_2 \colon S^m \to S^n/G$ \ into the resulting orbit manifold, \ $m,n \geq 1$ .

If \ $MCC (f_1,f_2) \not= N^\# (f_1, f_2)$ \ , then \ $f_1 \sim f_2$ \ (i.\,e. \ $f_1, f_2$ \ are homotopic). \medskip
\end{thm}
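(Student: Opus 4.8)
The plan is to argue by contraposition: assuming $f_1 \not\sim f_2$, I will show that \emph{every} Reidemeister class is essential, so that $N^\#(f_1,f_2)$ coincides with the Reidemeister number $\#\pi_0(E(f_1,f_2))$, which in turn dominates $MCC(f_1,f_2)$ from above. The cases $m=1$ or $n=1$ are degenerate and classical (for $n=1$ one has $S^n/G=S^1$ and maps from $S^m$ are governed by degree, while for $m=1,\,n\ge 2$ the target is aspherical in the relevant range), so the substance lies in $m,n\ge 2$, which I now assume. I also assume $G\neq 1$, since for $G=1$ the target is $S^n$ and theorem \ref{thm:7} already gives $MCC=N^\#$ in every case.

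\medskip

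First I would pass to the universal covering $p\colon S^n\to S^n/G$, with deck group $\pi_1(S^n/G)=G$. As $S^m$ is simply connected, each $f_i$ lifts to $\tilde f_i\colon S^m\to S^n$, unique up to the $G$-action; set $\alpha=[\tilde f_1]$ and $\beta=[\tilde f_2]$ in $\pi_m(S^n)$. Since $G$ acts freely, $f_1(x)=f_2(x)$ holds exactly when $\tilde f_2(x)=g\,\tilde f_1(x)$ for a unique $g\in G$, so
\[
C(f_1,f_2)\;=\;\bigsqcup_{g\in G} C_g,\qquad C_g=\{\,x: \tilde f_2(x)=g\,\tilde f_1(x)\,\},
\]
where $C_g$ is the coincidence set of the sphere pair $(g\circ\tilde f_1,\tilde f_2)\colon S^m\to S^n$. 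These $|G|$ pieces are the Reidemeister classes; because $\pi_1(S^m)=0$ there is no collapsing and $\#\pi_0(E(f_1,f_2))=|G|$. By the covering-space description of Nielsen numbers (cf. \cite{ko7}, 3.4), the class $g$ is essential if and only if the corresponding sphere pair is non-loose, which by theorem \ref{thm:7} means $g\circ\tilde f_1\not\sim a\circ\tilde f_2$, i.e. $g_*\alpha\neq a_*\beta$ in $\pi_m(S^n)$.

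\medskip

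The heart is then an orientation bookkeeping. Every self-homeomorphism $g$ of $S^n$ is homotopic to $\id$ or to $a$ according as $\deg g=\epsilon(g)=\pm1$, hence $g_*=\id$ or $g_*=a_*$ on $\pi_m(S^n)$. Thus class $g$ is \emph{inessential} precisely when $\alpha=a_*\beta$ (if $\epsilon(g)=+1$) or when $\alpha=\beta$ (if $\epsilon(g)=-1$); in particular essentiality depends only on $\epsilon(g)$. On the other hand $f_1\sim f_2$ holds iff $\alpha$ and $\beta$ lie in the same $G$-orbit of $\pi_m(S^n)$, that is iff $\alpha=\beta$, or $\alpha=a_*\beta$ with some orientation-reversing element present. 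Comparing the two: if any class is inessential, then either $\alpha=\beta$, or $\alpha=a_*\beta$ together with an orientation-reversing $g$ — in both cases $f_1\sim f_2$ — or else $\alpha=a_*\beta$ with $G$ entirely orientation-preserving. This last possibility is closed off by the classification of free actions on spheres: for $n$ even a nontrivial free action forces the orientation-reversing antipodal involution (so $G$ cannot be all orientation-preserving), while for $n$ odd one has $a\simeq\id$, hence $a_*=\id$ and again $\alpha=\beta$. In every case a single inessential class yields $f_1\sim f_2$; contrapositively, $f_1\not\sim f_2$ makes all $|G|$ classes essential.

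\medskip

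It follows that $f_1\not\sim f_2$ gives $N^\#(f_1,f_2)=|G|=\#\pi_0(E(f_1,f_2))$. Sandwiching with the general lower bound $N^\#\le MCC$ and the Reidemeister upper bound $MCC\le\#\pi_0(E(f_1,f_2))$ (valid for $n\neq2$) forces $MCC(f_1,f_2)=N^\#(f_1,f_2)$, which is the desired contrapositive. I expect the main obstacles to be twofold: making the covering-space identification of \emph{essentiality} (not merely of the classes) fully rigorous, so that theorem \ref{thm:7} may be applied class by class; and the case $n=2$, where the target is necessarily $\RP^2$ and the Reidemeister upper bound is unavailable, so that $MCC=N^\#$ must be verified by a direct construction rather than by the sandwich argument above. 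The orientation/free-action bookkeeping of the third paragraph is the delicate conceptual core, since it is exactly where the selfcoincidence condition $f_1\sim f_2$ emerges as the unique source of a Wecken defect.
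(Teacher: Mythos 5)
Your overall skeleton is the same as the paper's (the statement is proved there jointly with theorem \ref{thm:10} in section \ref{sec:4}): lift to the universal covering, identify the Reidemeister set with $G$ so that $\#\pi_0(E(f_1,f_2))=\#G$ for $m,n\geq 2$, test essentiality of the class $g$ by looseness of the sphere pair $(g\circ\tilde f_1,\tilde f_2)$ via the covering-space description (\cite{ko7}, 3.4) and theorem \ref{thm:7}, and then sandwich $N^\#\leq MCC\leq\#\pi_0(E(f_1,f_2))$. Where you genuinely diverge is the ``orientation bookkeeping'', and here the paper has a slicker device that you may want to adopt: since $G$ acts freely, the pair $(\tilde f_1,\,g\circ\tilde f_1)$ is coincidence-free for every nontrivial $g\in G$, hence $g\circ\tilde f_1\sim a\circ\tilde f_1$ by \cite{gr}, exercise 16.7; so if some lifting pair $(\tilde f_1,\tilde f_2)$ is loose, then $\tilde f_2\sim a\circ\tilde f_1\sim g\circ\tilde f_1$, and projecting gives $f_1\sim f_2$ with no case distinction on $n$, $\deg g$, or the structure of $G$. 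Your hand-made bookkeeping reaches the same conclusion but contains a literal slip: a self-homeomorphism of $S^n$ of degree $-1$ is homotopic to $a$ only when $n$ is even, since $\deg a=(-1)^{n+1}$. This is harmless in context -- by the Lefschetz fixed point theorem every nontrivial element of a free action has degree $(-1)^{n+1}$, so orientation-reversing deck transformations occur only for even $n$, which is precisely what makes your dichotomy ``$g_*=\id$ or $g_*=a_*$'' valid -- but that fact is doing the work and should be stated, not the false general claim; note also that the paper's trick automatically supplies the orientation-reversing comparison element that your ``closing off'' step has to hunt for.

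The one genuine gap is the case $n=2$ (so $G\cong\Z_2$ and the target is $\R P(2)$), which you correctly flag but leave open: your sandwich needs $MCC(f_1,f_2)\leq\#\pi_0(E(f_1,f_2))$, and theorem \ref{thm:1}(iv) excludes $n=2$. No new direct construction is needed, though: the paper closes this case by citing a theorem of J.~Jezierski on coincidences of maps into surfaces (\cite{je}, 4.0), which yields $\#G=\#\pi_0(E(f_1,f_2))\geq MCC(f_1,f_2)\geq N^\#(f_1,f_2)$ even when $n=2$. With that citation inserted, and with the degenerate cases ($n=1$, or $G$ trivial, handled via theorem \ref{thm:7} as you indicate), your argument becomes a complete and correct proof.
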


\begin{defn}\label{def:4}
	A pair \ $(M,N)$ \ of manifolds (as in \ref{equ:11}) has the {\it (full) Wecken property} \ $MCC \equiv N^\#$ \ (or the {\it selfcoincidence Wecken property \ $MCC \equiv N^\#$}, resp.), if \ $MCC (f_1,f_2) = N^\# (f_1,f_2)$ for all maps \ $f_1,f_2 \colon M \to N$ \ (or for all pairs of homotopic maps \ $f_1 \sim f_2 \colon M \to N$, resp.).
\end{defn}

Analoguous properties can be defined for all combinations of a minimum number \ $MC(C)$ \ with a Nielsen number.
E.\,g. according to theorems \ref{thm:7} and \ref{thm:8} all pairs of spheres have the Wecken property \ $MCC \equiv N^\#$ while all pairs of tori enjoy even the Wecken properties
\begin{equation*}
	MCC \; \equiv \; N^\# \; \equiv \; \tilde N \; \equiv \; N \; \equiv \; N^\Z.
\end{equation*}

In a similar parlance one could even summarize the central results of nearly six decades of topological fixed point theory in one single sentence:
given a closed connected manifold \ $M$, \ it has the Wecken fixed point property \ $MF \, \equiv \, N( \id , -)$ \ if and only if it is {\it not} a surface with strictly negative Euler characteristic \ $\chi (M)$ \ (see \cite{ni}, \cite{we} and \cite{ji1}, \cite{ji2}; compare also 3.10 below).

\begin{cor}\label{cor:1}
	If \ $N$ is a spherical space form \ $S^n/G$ \ (as in \ref{thm:11}) and \ $m,n \geq 1$, then the full and the selfcoincidence Wecken properties \ $MCC \equiv N^\#$ \ are equivalent for the pair \ $(S^m,N)$ .
\end{cor}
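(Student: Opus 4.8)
The plan is to recognize that this corollary is a purely formal consequence of Theorem \ref{thm:11}, so the entire argument reduces to a short logical dichotomy rather than any geometric construction. The point of Theorem \ref{thm:11} is precisely that, for targets of the form $S^n/G$, the minimum number $MCC$ and the Nielsen number $N^\#$ can disagree \emph{only} on homotopic pairs; the corollary just unpacks what this means at the level of Wecken properties.

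First I would dispose of the trivial implication. Every pair of homotopic maps $f_1 \sim f_2 \colon S^m \to N$ is in particular a pair of maps $S^m \to N$, so the full Wecken property $MCC \equiv N^\#$ (quantified over all pairs) restricts immediately to the selfcoincidence Wecken property (quantified over homotopic pairs). Hence the full property always implies the selfcoincidence property, and it remains only to establish the converse.

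For the converse, I would assume that the pair $(S^m, N)$ enjoys the selfcoincidence Wecken property and take an arbitrary pair $f_1, f_2 \colon S^m \to N = S^n/G$. Splitting into two cases according to whether $f_1 \sim f_2$: if $f_1 \sim f_2$, then $(f_1, f_2)$ is a selfcoincidence pair and the hypothesis directly yields $MCC(f_1, f_2) = N^\#(f_1, f_2)$; if instead $f_1 \not\sim f_2$, then the contrapositive of Theorem \ref{thm:11} forces $MCC(f_1, f_2) = N^\#(f_1, f_2)$ as well. In either case the two numbers coincide, so $MCC(f_1, f_2) = N^\#(f_1, f_2)$ holds for \emph{all} pairs, which is exactly the full Wecken property.

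There is no genuine obstacle in this argument, and I do not expect any step to require calculation: all the substantive content has been absorbed into Theorem \ref{thm:11}. The only subtlety worth stating carefully is the direction of the quantifiers in Definition \ref{def:4} — that the selfcoincidence property is a restriction of the full property to the smaller class of homotopic pairs — after which the corollary simply records that, for spherical space form targets, this smaller class already detects every possible failure of $MCC \equiv N^\#$.
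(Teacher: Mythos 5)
Your proof is correct and is exactly the argument the paper intends: Corollary \ref{cor:1} is stated as an immediate consequence of Theorem \ref{thm:11}, whose contrapositive gives $MCC(f_1,f_2)=N^\#(f_1,f_2)$ for all non-homotopic pairs, so the full Wecken property reduces to its restriction to homotopic pairs. Your handling of the quantifiers in Definition \ref{def:4} is also the right reading, so nothing is missing.
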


The same holds also if \ $N$ \ is a real, complex or quaternionic projective space (inspect table 4.7 \ below where the values of \ $M(C)C (f_1, f_2)$ \ and \ $N^\# (f_1, f_2)$ \ are listed for \textit{all} maps \ $f_1,f_2 \colon S^m \to \K P(n')$ ).

So in general is seems worthwhile to take a closer look at the so-called {\it selfcoincidence setting} where the two maps \ $f_1,f_2 \colon M \to N$ \ are homotopic.
Since minimum, Nielsen and Reidemeister numbers are homotopy invariants, we may assume that \ $f_1 \equiv f_2$; \ i.\,e. we need to consider only pairs of the form \ $(f,f)$.

Clearly \ $C(f,f) = M$ \ is connected by assumption.
Therefore \ $MCC(f,f)$ \ and the Nielsen numbers cannot exceed 1.
Moreover, we will see in section \ref{sec:5} below that \ $\tilde N (f,f) = N (f,f)$ \ for all maps \ $f \colon M \to N$.

As another special feature of the selfcoincidence setting we have the following refined notion of looseness (introduced by Dold and Goncalves, cf. \cite{dg}, p.296; compare also \cite{ko7}, 5.3 for further versions).

\begin{defn}
	The pair \ $(f,f)$ \ is {\it loose by small deformation} if and only if for every metric on \ $N$ \ and every \ $\varepsilon > 0$ \ there exists an $\varepsilon$-approximation \ $f'$ of $f$ \ such that \ $f'(x) \not = f(x)$ \ for all \ $x \in M$.
\end{defn}

A homotopy lifting argument shows that \ $(f,f)$ \ is loose by small deformation precisely if the pulled back tangent bundle \ $f^{\dis *}(TN)$ \ has a nowhere  vanishing section over \ $M$ \ (yielding directions into which to `push the map \ $f$ \ away from itself'), cf. \cite{dg}, 2.13 or \cite{ko7}, 5.3. This holds at least when \ $m<n$ \ or when \ $N$ \ allows a vectorfield without zeroes (i.\,e. when \ $N$ \ is noncompact or the Euler characteristic \ $\chi (N)$ \ vanishes, e.\,g. when \ $n$ \ is odd).

In the special case where \ $M = S^m$ \ and -- without loss of generality -- \ $[f] \in \pi_m(N)$, the required section exists if and only if \ $\partial_N ([f]) = 0$, where \ $\partial_N$ \ denotes the boundary homomorphism in the (horizontal) exact homotopy sequence
\stepcounter{thm}
\begin{equation}\label{equ:13}
	\xymatrix{
		\dots \pi_m (STN)  \ar[r]                                                              &
		\pi_m(N)           \ar[r]^-{\partial_N}                                                &
		\pi_{m-1}(S^{n-1}) \ar[r]^{\incl_{\dis *}} \ar@{-->}[d]^{E\coloneqq \text{suspension}} &
		\pi_{m -1} (STN)   \ar[r]                                                              &
		\dots \\
		&& \pi_m (S^n)
	}
\end{equation}
of the space $STN$ of unit tangent vectors (with respect to any Riemannian metric), fibered over \ $N$. (If \ $m = 1$, \ put \ $\partial_N \equiv 0$).

For every element \ $[f] \in \pi_m(N)$ \ we can interpret \ $\partial_N([f])$ \ as being the `index' of a section in \ $f^{\dis *}(TN)$ \ with only one zero in \ $S^m$.
Hence \ $MC(f,f)$ \ cannot exceed 1 \ -- \ just like \ $MCC(f,f)$ \ and the Nielsen numbers.
Clearly, precise vanishing criteria determine these selfcoincidence invariants completely.

Here is an example.

\begin{prop}\label{prop:1}
	Let \ $N = \K P(n')$ \ be a (real, complex or quaternionic) projective space, and let \ $d = 1, 2$ or $4$, resp., denote the real dimension of the field \ $\K = \R,\, \C$ \ or \ $\H$, \ resp.
	Assume that \ $n \, = \, n' \cdot d \, \not\equiv \, 0 \, (2d)$.

	Then \ $\partial_N \equiv 0$. \ Hence for all maps \ $f$ \ from a sphere to \ $N$ \ the pair \ $(f,f)$ is loose by small deformation.
\end{prop}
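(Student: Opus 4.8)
The plan is to use the criterion already recorded above: $\partial_N$ vanishes on a class $[f]\in\pi_m(N)$ exactly when the pulled back bundle $f^{\dis *}(TN)$ admits a nowhere vanishing section over $S^m$, so it suffices to prove that $\partial_N$ is the zero homomorphism on every group $\pi_m(N)$, $m\geq 1$. Two cheap reductions come first. Since the fibre of $STN\to N$ is $S^{n-1}$, the target group $\pi_{m-1}(S^{n-1})$ vanishes for $1\leq m\leq n-1$ (and $\partial_N\equiv 0$ for $m=1$ by convention); thus only the range $m\geq n$ needs attention. Moreover the hypothesis $n=n'\cdot d\not\equiv 0\,(2d)$ says precisely that $n'$ is odd, equivalently that $n'+1$ is even, and it is this parity that supplies the extra structure on $\K^{n'+1}$ used below. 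For $\K=\R$ one is then done at a stroke: $n'$ odd forces $\chi(\R P^{n'})=0$, so $T\R P^{n'}$ carries a nowhere zero vector field, i.e. the fibration $S^{n-1}\to STN\to N$ admits a section and the exact sequence \eqref{equ:13} splits; hence $\partial_N\equiv 0$ for all $m$ simultaneously.

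For $\K=\C$ (with $n'\geq 2$, so that $2n'-1\geq 3$) I would work over the Hopf fibration $S^1\to S^{2n'+1}\xrightarrow{p}\C P^{n'}$, under which $p^{\dis *}(T\C P^{n'})$ is the horizontal subbundle of $TS^{2n'+1}$. Because $n'+1$ is even, $\C^{n'+1}$ admits a quaternionic structure $J$ (a conjugate linear isometry with $J^2=-\id$); the resulting field $x\mapsto Jx$ satisfies $Jx\perp_{\C}x$ and is therefore a nowhere zero section of $p^{\dis *}(T\C P^{n'})$ over $S^{2n'+1}$. For $m\geq 3$ every $f\colon S^m\to\C P^{n'}$ lifts through $p$, since the fibre $S^1$ has no higher homotopy and the only lifting obstruction lies in $H^2(S^m)=0$; pulling the section back along such a lift exhibits a nowhere vanishing section of $f^{\dis *}(T\C P^{n'})$, while for $m\leq 2$ the target $\pi_{m-1}(S^{2n'-1})$ already vanishes. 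Hence $\partial_N\equiv 0$ in the complex case as well.

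The quaternionic case is where I expect the real difficulty to lie. The bulk of the range is still cheap: for $m\leq 4n'-1$ the target vanishes, and for $m=4n'$ the Hopf fibration $S^3\to S^{4n'+3}\to\H P^{n'}$ gives $\pi_{4n'}(\H P^{n'})\cong\pi_{4n'-1}(S^3)$, a finite group, mapping into the torsion free group $\pi_{4n'-1}(S^{4n'-1})\cong\Z$, so $\partial_N=0$ there too. The genuinely hard dimensions are $m>4n'$, where source and target are unstable or stable stems. The same Hopf fibration identifies $p^{\dis *}(T\H P^{n'})$ with the quaternionic orthogonal complement $\gamma^{\perp}$ of the tautological line, and the parity $n'+1$ even makes $\dim_{\R}\H^{n'+1}=4(n'+1)$ divisible by $8$, so that the three vertical fields $x\mapsto xi,\,xj,\,xk$ sit inside a Radon--Hurwitz/Clifford system large enough to make a horizontal section plausible. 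The main obstacle, in contrast to the real and complex cases, is that no such section is given by any algebraic formula -- the noncommutativity of $\H$ defeats every bilinear attempt -- and the primary obstruction to it lives in a generally nonzero stable stem $\pi_{m-1}(S^{4n'-1})$. My plan is therefore to compute $\partial_N$ in these top dimensions by comparing the boundary homomorphisms of the unit tangent fibration and of the quaternionic Hopf fibration over their common base $\H P^{n'}$, and to show that $n'$ odd forces the relevant obstruction class to be annihilated; carrying out this comparison is the crux. In every case, once $\partial_N([f])=0$ is established, the homotopy lifting description recalled above yields a nowhere vanishing section of $f^{\dis *}(TN)$ and hence that $(f,f)$ is loose by small deformation.
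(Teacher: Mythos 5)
Your real and complex cases are correct, and at bottom they reproduce the paper's own construction. The paper proves Proposition \ref{prop:1} in one stroke at the end of Example \ref{exa:5}: since $n'$ is odd, group the coordinates of $\K^{n'+1}$ into consecutive pairs, view each pair as an element of the Cayley--Dickson double $\K\times\K$ (the complex, quaternionic or octonionic numbers), and left-multiply by $(0,1)$; the resulting map $s(x)=(-\overline{x_2},\overline{x_1};\dots;-\overline{x_{n'+1}},\overline{x_{n'}})$ is asserted to satisfy $s(x)\notin\K\cdot x$, hence to give a section of $p_{\K}\colon V_{n'+1,2}(\K)\to S^{n+d-1}$ in diagram \ref{equ:16}, so that $\partial_{\K}\equiv 0$; combined with $\partial_{\K}=\partial_N\scr\circ\dis p_{\dis *}$ and the decomposition of theorem \ref{thm:3} (plus the fact that $T\K P(n')\vert_{\K P(n'-1)}$ admits a nowhere vanishing section for purely dimensional reasons), this kills $\partial_N$ for all $m$ at once. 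Your quaternionic structure $J$ in the complex case \emph{is} this $s$ for $\K=\C$, and your globalization through the Hopf fibration (possible because the fibre $S^1$ is aspherical in degrees $\geq 2$) is a clean substitute for the appeal to theorem \ref{thm:3}; your Euler-characteristic argument for $\K=\R$ is also fine. Note that your parenthetical restriction $n'\geq 2$ is genuinely needed, although the literal hypothesis of the proposition allows $n'=1$: for $\C P(1)=S^2$ one has $\partial_{S^2}(\iota_2)=\pm 2\iota_1\neq 0$ from the homotopy sequence of $STS^2=\R P^3$, so the statement implicitly excludes $n'=1$ when $\K\neq\R$.

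The quaternionic case is a genuine gap in your proposal: everything above $m=4n'$ is left as a plan (``carrying out this comparison is the crux''), so the statement is not proved. The comparison with the paper is, however, double-edged. On its face the paper refutes your claim that ``no algebraic formula'' can produce the section -- the octonionic $s$ above is such a formula, and a section need not be $\H$-linear or bilinear, only continuous with $s(x)\notin\H\cdot x$. But your skepticism is in fact better founded than the paper's argument: because quaternionic conjugation reverses products, the displayed $s$ does \emph{not} avoid quaternionic lines. For example, with $x=(k,-j)/\sqrt{2}\in\H^2$ (padded by zeros in the remaining pairs) one has $ik=-j$ and $i(-j)=-k$, so $s(x)=(-\overline{x_2},\overline{x_1})/\sqrt{2}=(-j,-k)/\sqrt{2}=i\cdot x$; an analogous counterexample exists for right lines. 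Indeed, by the classical theorem of Sigrist and Suter (Trans.\ Amer.\ Math.\ Soc.\ 184 (1973)), the symplectic Stiefel fibration $V_{t,2}(\H)\to S^{4t-1}$ admits a cross-section if and only if $t\equiv 0\;(24)$, so no construction whatsoever yields one for general even $t=n'+1$; since the base is a sphere, this says $\partial_{\H}(\iota_{4n'+3})\neq 0$ whenever $24\nmid n'+1$, and via $\partial_{\H}=\partial_N\scr\circ\dis p_{\dis *}$ the quaternionic case of the proposition is then itself false (already $\H P(1)=S^4$, with $n=4\not\equiv 0\,(8)$ but $\chi(S^4)\neq 0$, is a counterexample to the literal statement). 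So your plan to show that ``$n'$ odd forces the obstruction to be annihilated'' cannot succeed, and the honest summary is that your proof is complete exactly where the claim is sound -- $\K=\R$ with $n'$ odd, and $\K=\C$ with $n'\geq 3$ odd -- while the remaining case is not a removable gap but a flaw inherited from the paper's own proof.
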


The proof and many more details concerning this special choice of \ $N$ \ will be given in example \ref{exa:5} below. \hfill $\Box$
\bigskip

Next we express a sufficient condition for the selfcoincidence Wecken property \ $MCC \equiv N^\# $ \ in the language of algebraic topology, as follows.

\begin{defn}\label{def:3}
	We call the assumption
	\begin{equation*}
		0 \;\; = \;\; \partial_N (\pi_m (N)) \; \cap \; \ker (E \colon \pi_{m-1} (S^{n-1}) \to \pi_m (S^n))
	\end{equation*}
	(compare 1.16 
	) the {\it Wecken condition for \ $(m,N)$.}
\end{defn}

From the discussion in \cite{ko7}, 5.6-5.10, and from section \ref{sec:5} below we obtain

\begin{thm}\label{thm:9}
	Given a smooth connected $n$-manifold \ $N$ \ without boundary, let \ $[f] \in \pi_m(N)$, \ $m,n \geq 1$. \ Then
	\begin{equation*}
		MC (f,f) = MCC (f,f)
	\end{equation*}
	and these minimum numbers as well as the four Nielsen numbers of \ $(f,f)$ \ take only 0 \ and 1 as possible values.

	Furthermore we have the following logical implications:

	\medskip
	{\rm (i)} $\partial_N([f])\in\pi_{m-1}(S^{n-1})$ vanishes;

	$\phantom{i}\!$$\Updownarrow$

	{\rm (ii)} $(f,f)$ is loose by small deformation;

	\medskip
	$\phantom{i}$$\Bigdownarrow$ \quad $\left(\Updownarrow \mbox {e.\,g. if } \pi_1(N) \not = 0\; \mbox{ or } N = \K P(n') \mbox{ where } n' \geq 2 \mbox{ and } \K = \R, \C \mbox{ or } \H\right)$

	\medskip
	{\rm (iii)} $MCC(f,f)=0$; equivalently, $(f,f)$ is loose (by any deformation);

	\medskip
	$\phantom{i}$$\Bigdownarrow$ \quad $\left(\Updownarrow \mbox {e.\,g. if } N=S^n/G,\ G \not \cong \Z_2\right)$

	\medskip
	{\rm (iv)} $N^\#(f,f)=0$;

	$\phantom{i}$$\Updownarrow$

	{\rm (v)} $E \,\scr \circ \dis \partial_N (([f])=0$.
	\medskip

	In particular, the five conditions (i) - (v) are equivalent \ (and then $MC(f,f) = MCC(f,f) = N^\# (f,f))$ \ for all maps \ $f \colon S^m \to N$ \ if and only if the Wecken condition holds for \ $(m,N)$ \ (cf. \ref{def:3}).

	This condition is satisfied e.\,g. when $N$ is noncompact or has zero Euler characteristic \ $\chi (N)$ \ (e.\,g. for odd $n$) \ or in the `stable
	dimension range' \ $m < 2n-2$ \ \ or when \ $m \leq n+4$, \ $(m,n) \not= (10,6)$.
\end{thm}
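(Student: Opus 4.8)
The plan is to reduce the entire statement to obstruction theory for sections of the pulled-back tangent bundle $f^*(TN)$ over $S^m$, combined with the normal-bordism / Pontryagin--Thom description of the three looseness obstructions. Throughout, $M=S^m$, so the literal coincidence set $C(f,f)=S^m$ is connected, and $f$ represents $[f]\in\pi_m(N)$. First I would dispose of the quantitative claims. A small perturbation of one copy of $f$ turns $C(f,f)$ into the zero set of a generic section of $f^*(TN)$; on the connected manifold $S^m$ one slides the isolated zeros together and merges them, producing a representative $(f,f')$ with a single coincidence point whose local index is exactly $\partial_N([f])\in\pi_{m-1}(S^{n-1})$ (the interpretation already recorded just before \ref{prop:1}). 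Hence $MC(f,f)\le 1$, and from the inequality chain $0\le N^\Z\le N\le\tilde N\le N^\#\le MCC\le MC\le 1$ all seven numbers lie in $\{0,1\}$. The equality $MC=MCC$ is then forced: if $MCC=0$ some deformation empties $C$, whence $MC=0$, while $MCC=1$ gives $MC\ge 1$ and so $MC=1$.

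Next I would establish the core equivalences. The equivalence (i)$\Leftrightarrow$(ii) is the homotopy-lifting argument around \eqref{equ:13}: $(f,f)$ is loose by small deformation iff $f^*(TN)$ admits a nowhere-zero section, iff $f$ lifts through $STN\to N$, iff the single obstruction $\partial_N([f])$ vanishes, by exactness of the fibre sequence (cf. \cite{dg}, \cite{ko7}). The implications (ii)$\Rightarrow$(iii)$\Rightarrow$(iv) are formal: loose by small deformation is a special case of looseness, looseness is by definition $MCC(f,f)=0$, and $N^\#$ is a lower bound for $MCC$. For (iv)$\Leftrightarrow$(v) I would invoke the normal-bordism description of $N^\#$: the single coincidence point carries a class in the relevant (twisted, stabilized) normal bordism group, which for $M=S^m$ is identified via the Pontryagin--Thom procedure with the once-suspended index $E\,\scr\circ\dis\partial_N([f])\in\pi_m(S^n)$, and its vanishing is equivalent to $N^\#(f,f)=0$. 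This identification is the content of \cite{ko7}, 5.6--5.10, reproved in \S\ref{sec:5}.

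The conditional reverse arrows (iii)$\Rightarrow$(ii) for $\pi_1(N)\neq0$ or $N=\K P(n')$, and (iv)$\Rightarrow$(iii) for $N=S^n/G$ with $G\not\cong\Z_2$, are genuine Wecken statements; I would establish them through the covering-space description of the Nielsen numbers (\cite{ko7}, 3.4) together with the explicit homotopy computations collected in \S\ref{sec:4}, where \ref{prop:1} and table 4.7 feed in. Granting the always-true chain (i)$\Leftrightarrow$(ii)$\Rightarrow$(iii)$\Rightarrow$(iv)$\Leftrightarrow$(v), the only possible gap is between (ii), i.e. $\partial_N([f])=0$, and (v), i.e. $E\,\scr\circ\dis\partial_N([f])=0$. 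Since (ii)$\Rightarrow$(v) always (apply $E$), all five conditions are equivalent for a given $f$ precisely when $\partial_N([f])\in\ker E$ forces $\partial_N([f])=0$. Ranging over all $f$, this is exactly the assertion $\partial_N(\pi_m(N))\cap\ker(E)=0$, the Wecken condition of \ref{def:3}, which proves the ``in particular'' clause.

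Finally I would verify the explicit sufficient conditions. If $N$ is noncompact or $\chi(N)=0$ (in particular when $n$ is odd) then $N$ carries a nowhere-zero vector field, the fibration $STN\to N$ splits, and $\partial_N\equiv0$, so the Wecken condition holds trivially. In the stable range $m<2n-2$, Freudenthal's theorem makes $E\colon\pi_{m-1}(S^{n-1})\to\pi_m(S^n)$ injective, so $\ker E=0$ and again the condition holds. The metastable range $m\le n+4$ instead demands a direct analysis of $\ker E$ through the EHP sequence, and the single excluded pair $(m,n)=(10,6)$ is exactly the place where $\partial_N(\pi_m(N))$ can meet a nontrivial element of $\ker E$. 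The main obstacle is twofold: the precise Pontryagin--Thom identification of the $N^\#$-obstruction with $E\,\scr\circ\dis\partial_N([f])$ in the second step, and, for the conditional equivalences and the metastable claim, the low-dimensional stable-homotopy bookkeeping needed to control $\ker E$ and to certify looseness in the presence of a nonvanishing but suspension-trivial index.
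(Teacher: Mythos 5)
Your proposal is correct and follows essentially the same route as the paper: the index interpretation of $\partial_N([f])$ as the obstruction carried by a single zero of a section of $f^{\dis *}(TN)$ over $S^m$ gives $MC(f,f)\le 1$, hence all seven invariants lie in $\{0,1\}$ and $MC=MCC$; (i)$\Leftrightarrow$(ii) is the homotopy-lifting argument at \ref{equ:13}; (iv)$\Leftrightarrow$(v) is exactly the identification $\pr_{\dis *}(\omega^{\#}(f,f))=\pm E\,\scr\circ\dis\,\partial_N([f])\in\pi_m(S^n)$ from \cite{ko7}, 5.6--5.7 (your ``stabilized'' is a slight misnomer, since $\omega^\#$ is the nonstabilized invariant, but the cited identification is the right one); and the sufficient conditions are checked the same way (nowhere-zero vector fields, Freudenthal, EHP-sequence plus Toda's tables for $m\le n+4$, $n$ even, excluding $(10,6)$ where $\ker E\neq 0$). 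The one under-specified step is the pair of conditional reverse arrows, which the paper obtains not from the covering-space description of Nielsen numbers but from theorem \ref{thm:2} --- looseness of $(f,f)$ forces $j_{N\dis *}(\partial_N[f])=0$, and $\ker j_{N\dis *}=0$ when $\pi_1(N)\neq 0$ or $N=\K P(n')$, $n'\ge 2$ --- and, for $N=S^n/G$ with $G\not\cong\Z_2$, from the Euler-characteristic dichotomy $(\#G)\cdot\chi(N)=\chi(S^n)=2$: either $G$ is trivial and \ref{thm:7}c gives the Wecken property for spheres, or $n$ is odd and a nowhere-zero field makes every pair $(f,f)$ loose; your pointer to \cite{ko7}, 3.4 and the computations of section \ref{sec:4} lands in the right territory (and table \ref{tab:1} does settle the $\K P(n')$ case), but invoking theorem \ref{thm:10} here would be circular, since its homotopic case is itself deferred to the theorem being proved.
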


In view of this theorem we may say that \ $N^\# (f,f)$ \ is `\textit{at most} one desuspension short` of being a complete looseness obstruction
(whenever the domain of \ $f$ \ is a sphere).

Clearly the Wecken condition, as well as conditions (i)-(v) except possibly (iii), remain unaffected when we replace the map \ $f$ \ by any of its liftings into a covering space of \ $N$.

If \ $(S^m,N)$ happens to have the selfcoincidence Wecken property \ $MCC \equiv N^\#$ \ (cf. \ref{def:4}) then the Wecken condition for \ $(m,N)$ \ (cf. \ref{def:3}) is equivalent to each loose pair \ $(f,f)$ \ being already loose by small deformation. This holds e.\,g. when \ $N = S^n$.

On the other hand we have

\begin{cor}
	Let \ $N$ \ be a spherical space form \ $S^n/G$ \ (as in \ref{thm:11}), or else a real, complex or quaternionic projective space \ $\K P(n')$. \
	Assume that \ $\# G$, $n\geq2$, \ or that \ $n' \geq 2$, \ resp. (i.\,e. \ $N$ \ is not a sphere).

	Then \ $(S^m , N)$ \ has the full Wecken property \ $MCC \equiv N^\#$ \ if and only if the Wecken condition holds for \ $(m,N)$ \ (cf. definitions \ref{def:4} and \ref{def:3}).
\end{cor}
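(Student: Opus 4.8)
The plan is to deduce the statement from Theorem \ref{thm:9} by first reducing the \emph{full} Wecken property to the \emph{selfcoincidence} Wecken property, and then identifying the latter with the mutual implication between conditions (iv) and (iii) of Theorem \ref{thm:9}. First I would invoke Corollary \ref{cor:1} (for $N = S^n/G$) together with its projective-space analogue (obtained by inspecting Table 4.7, exactly as asserted in the text following \ref{cor:1}) to replace the phrase ``$(S^m,N)$ has the full Wecken property $MCC \equiv N^\#$'' by ``$(S^m,N)$ has the selfcoincidence Wecken property $MCC \equiv N^\#$''. Both reductions ultimately rest on Theorem \ref{thm:11}: any pair $(f_1,f_2)$ with $MCC(f_1,f_2) \neq N^\#(f_1,f_2)$ must satisfy $f_1 \sim f_2$, so a failure of the full property already occurs on a homotopic (hence selfcoincidence) pair.

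Next I would record an elementary reformulation of the selfcoincidence Wecken property. For a homotopic pair $(f,f)$ the coincidence set $C(f,f)=S^m$ is connected, so $MCC(f,f)$ and $N^\#(f,f)$ both lie in $\{0,1\}$; since every Nielsen number is a lower bound for the corresponding minimum number, $N^\#(f,f)\leq MCC(f,f)$, and equality can only fail in the single case $MCC(f,f)=1$, $N^\#(f,f)=0$. Hence $(S^m,N)$ enjoys the selfcoincidence Wecken property precisely when, for every $f\colon S^m\to N$, condition (iv) of Theorem \ref{thm:9} implies condition (iii); equivalently, when (iii) and (iv) are equivalent for all $f$.

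It then remains to show that, under the stated hypotheses, ``(iv)$\Rightarrow$(iii) for all $f$'' coincides with the Wecken condition for $(m,N)$. In Theorem \ref{thm:9} one always has (i)$\Leftrightarrow$(ii), (ii)$\Rightarrow$(iii), (iii)$\Rightarrow$(iv) and (iv)$\Leftrightarrow$(v). Under our hypotheses the parenthetical strengthening (ii)$\Leftrightarrow$(iii) applies throughout: for a spherical space form $S^n/G$ with $\#G,n\geq 2$ one has $\pi_1(N)=G\neq 0$, while for $N=\K P(n')$ with $n'\geq 2$ the clause $N=\K P(n')$ applies directly (this also subsumes $\R P(n')=S^{n'}/\Z_2$). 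Thus (i)$\Leftrightarrow$(ii)$\Leftrightarrow$(iii) and (iv)$\Leftrightarrow$(v) hold for every $f$, so the only possible gap in the cycle of implications is (iv)$\Rightarrow$(iii); that is, conditions (i)--(v) are all equivalent for every $f$ exactly when (iv)$\Rightarrow$(iii) holds for every $f$. By the final assertion of Theorem \ref{thm:9}, the five conditions are equivalent for all $f$ if and only if the Wecken condition for $(m,N)$ holds. Chaining these equivalences yields: full Wecken $\Leftrightarrow$ selfcoincidence Wecken $\Leftrightarrow$ [(iv)$\Rightarrow$(iii) for all $f$] $\Leftrightarrow$ [(i)--(v) equivalent for all $f$] $\Leftrightarrow$ Wecken condition for $(m,N)$, which is the claim.

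The main obstacle is the first reduction rather than the final bookkeeping. Once the passage from all pairs to homotopic pairs is granted, everything else is a direct reading of the implication diagram in Theorem \ref{thm:9} and of the inequality $N^\#\leq MCC$. The real content lies precisely in that passage, where Theorem \ref{thm:11} (for space forms) and the explicit computation recorded in Table 4.7 (for projective spaces) do the work. I would therefore take care to verify that the projective-space analogue of Corollary \ref{cor:1} genuinely holds for all three fields $\K=\R,\C,\H$ --- in particular that no pair $f_1\not\sim f_2\colon S^m\to\K P(n')$ can violate $MCC=N^\#$ --- since this is the one point where I must read off Table 4.7 instead of arguing abstractly.
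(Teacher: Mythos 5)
Your proposal is correct and takes essentially the same route as the paper: reduce the full Wecken property to the selfcoincidence one via corollary \ref{cor:1} (theorem \ref{thm:11} for space forms, table \ref{tab:1} for projective spaces), then read the equivalence with the Wecken condition \ref{def:3} off the implication diagram of theorem \ref{thm:9}, whose parenthetical (ii)$\Leftrightarrow$(iii) applies here because $\pi_1(N) \neq 0$ or $N = \K P(n')$ with $n' \geq 2$, and whose final assertion identifies ``(i)--(v) equivalent for all $f$'' with the Wecken condition. One small imprecision: for $\K = \C, \H$ the reduction cannot ``ultimately rest on theorem \ref{thm:11}'' (these projective spaces are not spherical space forms), a slip you effectively correct yourself by insisting that table \ref{tab:1} be verified for all three fields.
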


When can failures of the Wecken condition occur, and which geometric consequences do they have?
\bigskip

\begin{example}\textbf{spherical space forms.}
	Let \ $N = S^n / G$ \ be the orbit manifold of a free smooth action of a \textbf{nontrivial} finite group \ $G$ \ on \ $S^n$ \ as in \ref{thm:11}.
\end{example}

\begin{cor}\label{cor:2}
	Given \ $[f] \in \pi_m(S^n/G)$ \ and a lifting \ $[\tilde f] \in \pi_m (S^n)$ \ of \ $[f], \; m,n \geq 1$, \ the following conditions are equivalent:
	\begin{enumerate}[(i)]
		\item $\partial_N (f) \not= 0$ \ but \ $E \, \scr \circ \dis \partial_N(f) = 0$ \ ;
		\item $MCC (f,f) \not= N^\# (f,f)$ \ ;
		\item $N^\# (f,f) = 0$ \ but \ $f$ \ is coincidence producing (i.\,e. the pair \ $(f,f')$ \ cannot be loose for \textnormal{any}
			map \ $f' \colon S^m \to N$ \ ; thus \ $MCC (f,f') \not= 0$);
		\item $MCC (f,f) > MCC (\tilde f, \tilde f)$ \ ;
		\item $MC (f,f) > MC (\tilde f, \tilde f)$ \ ;
		\item $(\tilde f, \tilde f)$ \ is loose, but \ $(f,f)$ \ is not loose;
		\item $(\tilde f, \tilde f)$ \ is loose, but not by small deformation.
	\end{enumerate}
\end{cor}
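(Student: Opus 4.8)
The plan is to show that every one of the seven conditions is equivalent to the single relation
$$\alpha \neq 0 \quad\text{and}\quad E\alpha = 0, \qquad \text{where } \alpha := \partial_N([f]) \in \pi_{m-1}(S^{n-1}),$$
which is exactly condition (i) and is precisely a failure of the Wecken condition for $(m,N)$ (cf. \ref{def:3}). Throughout I work with $m,n \geq 2$; the cases $m=1$ (where $\partial_N \equiv 0$ by the convention in \eqref{equ:13}) and $n=1$ (where $S^n/G$ is again a circle and $\pi_{m-1}(S^0)=0$ for $m\geq 2$) are degenerate, all seven conditions then failing vacuously.

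First I would record the covering identity $\partial_N([f]) = \partial_{S^n}([\tilde f]) = \alpha$. Since the universal covering $p\colon S^n \to S^n/G$ is a local diffeomorphism, it lifts to a bundle map $STS^n \to STN$ of unit tangent sphere bundles which is the identity on the common fibre $S^{n-1}$ and covers $p$. Naturality of the connecting homomorphism in the fibration sequence \eqref{equ:13}, together with the fact that $p_*$ is an isomorphism on $\pi_k$ for $k\geq 2$, then gives $\partial_N \circ p_* = \partial_{S^n}$; as $[f]=p_*[\tilde f]$, this yields $\partial_N([f]) = \partial_{S^n}([\tilde f])$.

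Next I would translate the six conditions (i), (ii), (iv), (v), (vi), (vii) into statements about $\alpha$ and $E\alpha$ by invoking Theorems \ref{thm:9} and \ref{thm:7}. Because $\pi_1(N)=G\neq 0$, parts (i)--(iii) of Theorem \ref{thm:9} coincide, so for the pair $(f,f)$ one has $MCC(f,f)=0 \Leftrightarrow (f,f)$ loose $\Leftrightarrow \alpha=0$, while unconditionally $N^\#(f,f)=0 \Leftrightarrow E\alpha=0$. For the sphere pair $(\tilde f,\tilde f)$, Theorem \ref{thm:7} gives $MCC(\tilde f,\tilde f)=N^\#(\tilde f,\tilde f)$, whence $(\tilde f,\tilde f)$ loose $\Leftrightarrow E\alpha=0$, whereas loose by small deformation $\Leftrightarrow \alpha=0$. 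Since all relevant minimum and Nielsen numbers take only the values $0$ and $1$, satisfy $MCC \geq N^\#$ (the Nielsen number being a lower bound), and obey $MC=MCC$ in every selfcoincidence setting, each of (ii), (iv), (v), (vi), (vii) unwinds to exactly $\alpha\neq 0$ and $E\alpha=0$, matching (i). The asymmetry driving the corollary is that looseness over $N$ detects $\alpha$ itself (because $\pi_1 N\neq 0$), whereas looseness over the simply connected $S^n$ detects only $E\alpha$; the conditions therefore separate precisely on the gap $\{\alpha\neq 0,\ E\alpha=0\}$.

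The main work, and the expected obstacle, is the coincidence-producing clause (iii). The implication (iii) $\Rightarrow$ (i) is immediate, since a coincidence-producing $f$ cannot have $(f,f)$ loose, forcing $\alpha\neq 0$, while $N^\#(f,f)=0$ gives $E\alpha=0$. For the converse I would argue by contradiction: assume $\alpha\neq 0$ and $E\alpha=0$, and suppose some $f'$ makes $(f,f')$ loose; after a homotopy take $f(x)\neq f'(x)$ for all $x$ and choose a lift $\tilde f'$ of $f'$ (possible as $S^m$ is simply connected). Reading the coincidence-free condition over $N$ for the trivial deck transformation gives $\tilde f(x)\neq \tilde f'(x)$ for all $x$, so $MC(\tilde f,\tilde f')=0$ and hence $[\tilde f]=[a\circ \tilde f']$ by Theorem \ref{thm:7}. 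But $E\alpha=0$ means, again via Theorems \ref{thm:9} and \ref{thm:7}, that $\tilde f \sim a\circ \tilde f$, i.e. $[\tilde f]=[a\circ \tilde f]$; comparing the two identities and cancelling the homotopy equivalence $a$ yields $[\tilde f']=[\tilde f]$, so $f\sim f'$. Then $(f,f)$ would itself be loose, forcing $\alpha=0$, a contradiction. Hence $f$ is coincidence producing, closing (i) $\Leftrightarrow$ (iii). I expect the delicate points to be the careful justification of the covering identity for $\partial$ and the clean extraction of the equivalence ``$E\alpha=0 \Leftrightarrow \tilde f \sim a\circ \tilde f$'' for spheres; once these are in hand the lifting argument completes the loop.
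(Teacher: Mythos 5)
Your proposal is correct, and its skeleton is the same as the paper's implied derivation: the paper states this corollary as a consequence of Theorems \ref{thm:9} and \ref{thm:2} (together with Theorem \ref{thm:7} and the covering identity $\partial_{\tilde N}=\partial_N\scr\circ\dis p_{\dis *}$ recorded at the start of section \ref{sec:6}), and your translation of (i), (ii), (iv), (v), (vi), (vii) into the single statement $\alpha\neq 0$, $E\alpha=0$ uses exactly those ingredients, correctly exploiting that all the invariants lie in $\{0,1\}$, that $MC=MCC$ for domain a sphere, and that $MCC\equiv N^\#$ on pairs of spheres. The one place you genuinely diverge is the coincidence-producing clause (iii). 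The paper's route goes through Theorem \ref{thm:2}: for non-simply-connected $N$ the homomorphism $j_{N\dis *}$ is injective (proved via the configuration space $\tilde C_2(N)$ and the retraction argument in section \ref{sec:5}), so ``$f$ coincidence producing'' is equivalent to $j_{N\dis *}(\partial_N[f])\neq 0$, i.e.\ to $\partial_N[f]\neq 0$. You instead argue by contradiction with a lifting argument: a loose pair $(f,f')$ lifts to a coincidence-free pair $(\tilde f,\tilde f')$ over $S^n$, whence $\tilde f\sim a\scr\circ\dis\tilde f'$ by Theorem \ref{thm:7}b, while $E\alpha=0$ forces $\tilde f\sim a\scr\circ\dis\tilde f$; cancelling $a$ gives $f\sim f'$, hence $(f,f)$ loose and $\alpha=0$, a contradiction. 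This is sound (it is in fact the same mechanism the paper uses in proving Theorem \ref{thm:10}, where a loose lifted pair forces homotopy to $a\scr\circ\dis\tilde f_1$, cf.\ the citation of \cite{gr}), and it buys self-containedness: you only need the sphere classification of Theorem \ref{thm:7}, not the configuration-space criterion \ref{equ:15}. What the paper's route buys in exchange is generality -- Theorem \ref{thm:2} characterizes coincidence-producing maps for arbitrary targets $N$, not just those covered by a sphere. Two cosmetic points: condition (i) for a given $[f]$ \emph{witnesses} a failure of the Wecken condition \ref{def:3} rather than being identical to it (the latter quantifies over all of $\pi_m(N)$), and your dispatch of the degenerate cases $m=1$ or $n=1$ (where $\partial_N\equiv 0$, so $(f,f)$ and $(\tilde f,\tilde f)$ are loose by small deformation and all seven conditions fail) is correct as stated.
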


All this cannot occur when \ $G \not\cong \Z_2$ \ (since then \ $\chi (N) \cdot \# G \not= \chi (S^n)$ \ or \ $n \not\equiv 0(2)$) \ or when \ $m \leq n+4$ \ (even in the exceptional case \ $m = n+4 = 10$, \ since \ $\pi_{10} (S^6) = 0$) \ or when \ $m = n +5 \not = 11$ \ (since then \ $\ker E = 0$ \ or \ $n \not \equiv 0 (2)$).

However, consider the case \ $(m,n) = (11,6)$.
According to \cite{to} and \cite{pae} we have (in the sequence 1.16 for \ $\tilde N = S^6$)
\begin{equation*}
	\frac12 H \colon \pi_{11} (S^6) \xrightarrow{\cong} \Z \quad ; \quad \pi_{10} (S^5) \cong \Z_2 \quad ; \quad \pi_{10} (V_{7,2}) = 0
\end{equation*}
where \ $H$ \ denotes the Hopf invariant.
Thus \ $\partial_{S^6}$ \ is onto (since \ $ST (S^n) = V_{n+1,2}$, cf. 1.16), but \ $E$ \ and hence \ $E \, \scr \circ \dis \partial_{S^6}$ \ is trivial.
Therefore
\begin{equation*}
	0 \;\; \not= \;\; \pi_{10} (S^5) \;\; = \;\; \partial_{S^6} (\pi_{11} (S^6)) \; \cap \; \ker (E \colon \pi_{10} (S^5) \to \pi_{11} (S^6))
\end{equation*}
and the Wecken condition fails.

Given any map \ $ f \colon S^{11} \to \R P(6)$ and a lifting \ $\tilde f \colon S^{11} \to S^6$ \ of it, we see that
\begin{equation*}
	N^\# (f,f) \; = \; N^\# (\tilde f, \tilde f) \; = \; MCC (\tilde f, \tilde f) \; = \; 0
\end{equation*}
(cf. \ref{thm:9} and \ref{thm:7}c).
If \ $H (\tilde f) \equiv 0 (4)$ \ then \ $\partial_{S^6} (\tilde f) = 0$ \ and both pairs \ $(f,f) , (\tilde f, \tilde f)$ \ are loose by small deformation.
However, if \ $H (\tilde f) \equiv 2 (4)$ \ then \ $\partial_{\R P(6)} (f) \, = \, \partial_{S^6} (\tilde f) \, \not= \, 0$ \ and \ $f$ \ is even coincidence producing (i.\,e. \ $MCC (f,f') \not= 0$ \ for \textit{every} map \ $f' \colon S^{11} \to \R P(6)$ \ whether homotopic to \ $f$ \ or not; cf. theorem \ref{thm:2} below) and \ $MCC (f,f) \not = N^\# (f,f)$;\ moreover \ $(\tilde f, \tilde f)$ \ is loose, but not by small deformation (for further illustrations of the delicate difference between conditions (ii) and (iii) in theorem \ref{thm:9} see e.\,g. \cite{gr1} or \cite{gr2}; compare also \cite{gw}, example 2.4).--

You can find the precise values of \ $MCC (f_1, f_2)$ \ and \ $N^\# (f_1,f_2)$ \ for \textit{all} pairs of maps from \ $S^m$ \ to arbitrary spherical
space forms in theorem \ref{thm:10} below (and in 1.10).  \hfill$\Box$

\bigskip

We may want to look for further target manifolds \ $N$ \ where conditions (ii) and (iii) of theorem \ref{thm:9} are equivalent (and hence so are the Wecken condition and the selfcoincidence Wecken property \ $MCC \equiv N^\#$).
Thus let \ $j_N \colon S^{n-1} \to N - \{x_0\}$ \ denote a (base point preserving) inclusion map of the boundary sphere of a small $n$-ball in \ $N$ \ around some point \ $x_0$.

\begin{thm}\label{thm:2}
	Given \ $[f] \in \pi_m(N)$, we have the following logical implications (campare \ref{thm:9}):
	\medskip

	{\rm (ii)} \ $(f,f)$ \ is loose by small deformation;
	
	$\phantom{i}$ $\Downarrow$

	{\rm (iii)} \ $(f,f)$ \ is loose (by any deformation);

	$\phantom{i}$ $\Downarrow$

	{\rm (iii')} \ $f$ \ is not coincidence producing (i.\,e. there exists some map \ $f' \colon S^m \to N$ \ such the pair \ $(f, f')$ \ is loose,
		cf. \cite{brs});

	$\phantom{i}$ $\Updownarrow$

	{\rm (iii'')} \ $j_{N*} (\partial_N [f]) \in \pi_{m-1} (N - \{x_0\})$ vanishes.
	\medskip

	The three conditions (ii) - (iii') are equivalent for all maps \ $f \colon S^m \to N$ \ if and only if
	\stepcounter{thm}
	\begin{equation}\label{equ:15}
		0 \;\;\; = \;\;\; \partial_N (\pi_m(N)) \; \cap \; \ker (j_{N*} \colon \pi_{m-1} (S^{n-1}) \to \pi_{m-1} (N - \{x_0\})).
	\end{equation}
	This holds e.\,g. when \ $N$ \ is not simply connected or a (real, complex or quaternionic) projective space \ $\K P(n')$, $n' \geq 2$ \ (since then \ $\ker j_{N*} = 0$).
\end{thm}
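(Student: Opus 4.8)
The downward implications are immediate, and I would dispose of them first: a small deformation is in particular a deformation, so (ii)$\Rightarrow$(iii), and if $(f,f)$ is itself loose then $f'=f$ already witnesses that $f$ is not coincidence producing, so (iii)$\Rightarrow$(iii'). The substance lies in (iii')$\Leftrightarrow$(iii'') and in the final criterion, which I treat by a lifting reformulation. A pair $(g,g')\colon S^m\to N$ is coincidence free precisely when $(g,g')\colon S^m\to N\times N$ avoids the diagonal $\Delta$, i.e. factors through $(N\times N)\setminus\Delta$. The first projection $p_1\colon (N\times N)\setminus\Delta\to N$ is a fibre bundle with fibre $N\setminus\{x_0\}$, and a routine homotopy-lifting argument shows that $f$ \emph{fails} to be coincidence producing if and only if $[f]\in\pi_m(N)$ lifts through $p_1$, equivalently if and only if $\partial'([f])=0$, where $\partial'\colon\pi_m(N)\to\pi_{m-1}(N\setminus\{x_0\})$ is the connecting homomorphism of $p_1$.

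The key step, and the one I expect to require the most care, is to identify $\partial'$ on $[f]$ with $j_{N*}\circ\partial_N$. I would exploit that the normal bundle of $\Delta$ in $N\times N$ is canonically $TN$: a deleted tubular neighbourhood of $\Delta$ deformation retracts onto the unit tangent bundle $STN$, and the resulting inclusion $STN\hookrightarrow(N\times N)\setminus\Delta$ commutes with the projections to $N$. This exhibits a map of fibrations from the sphere bundle $STN\to N$ of sequence \ref{equ:13} to $p_1$; on fibres it is, via the exponential map, exactly the small-sphere inclusion $j_N\colon S^{n-1}\to N\setminus\{x_0\}$. Naturality of the connecting homomorphism then yields the commuting square relating $\partial_N$ and $\partial'$, so that $\partial'([f])=j_{N*}(\partial_N[f])$; combined with the previous paragraph this gives (iii')$\Leftrightarrow$(iii''). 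The delicate points are the geometric identification of the fibre map with $j_N$ and the basepoint/$\pi_1$ bookkeeping needed to read liftability of a free homotopy class off the vanishing of $\partial'$.

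For the final equivalence I invoke Theorem \ref{thm:9}, by which (ii) holds if and only if $\partial_N[f]=0$. Since the downward implications always hold, conditions (ii)--(iii') coincide for a given $f$ exactly when $j_{N*}(\partial_N[f])=0$ forces $\partial_N[f]=0$. As $[f]$ ranges over $\pi_m(N)$ the element $\partial_N[f]$ ranges over $\partial_N(\pi_m(N))$, so this holds for all $f$ precisely when $\partial_N(\pi_m(N))\cap\ker j_{N*}=0$, which is \ref{equ:15}.

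Finally I verify $\ker j_{N*}=0$ in the two stated cases. If $N$ is not simply connected I pass to the universal cover $q\colon\widetilde N\to N$; its restriction over $N\setminus\{x_0\}$ is a covering, so $q_*$ is injective and $\ker j_{N*}=\ker\tilde\jmath_*$, where $\tilde\jmath$ is the link of one point $\tilde x_0$ of the orbit $q^{-1}(x_0)$, an orbit with at least two points. Since some further orbit point is deleted, the complement $Y$ of a small ball about $\tilde x_0$ (punctured at the remaining orbit points) is noncompact, whence $H^n(Y,\partial Y)=0$ by Lefschetz duality; as $\dim Y=n$ this annihilates every obstruction to extending $\id_{S^{n-1}}$ over $Y$, so the link sphere is a retract and $\tilde\jmath_*$ is injective. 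For the simply connected projective spaces $\C P(n')$ and $\H P(n')$, $n'\geq2$, I instead use $N\setminus\{x_0\}\simeq\K P(n'-1)$ together with the fact that $j_N$ becomes the sphere-bundle projection $S^{n-1}\to\K P(n'-1)$, whose fibre inclusion $S^{d-1}\hookrightarrow S^{n-1}$ is null-homotopic (as $n-1>d-1$); the fibration sequence then forces $j_{N*}$ to be injective. (Real projective space falls under the first case.)
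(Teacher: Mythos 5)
Your argument is correct, and for the heart of the theorem it coincides with the paper's proof: the paper, too, transports a tubular neighbourhood of the diagonal by \ $p_{1\dis *}$ \ to embed the unit tangent bundle \ $STN$ \ fibrewise into the configuration space \ $\tilde C_2(N) = N \times N - \Delta$ \ (fibered over \ $N$ \ by \ $p_1$ \ with fibre \ $N - \{x_0\}$), identifies the induced fibre map with \ $j_N$, \ and concludes by comparing the two exact homotopy sequences that the connecting homomorphism of \ $p_1$ \ equals \ $j_{N \dis *} \scr \circ \dis \partial_N$, \ so that \ $f$ \ admits a coincidence-free partner \ $f'$ \ precisely when \ $j_{N \dis *}(\partial_N[f]) = 0$; \ criterion \ref{equ:15} is then extracted exactly as you do, via the equivalence of (ii) with \ $\partial_N[f] = 0$ \ (this appeal to theorem \ref{thm:9} is not circular, since that equivalence is the section/homotopy-lifting argument of the introduction and does not depend on the present theorem). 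The one place where you genuinely depart from the paper is the injectivity of \ $j_{N \dis *}$ \ when \ $\pi_1(N) \not= 0$: \ the paper deletes only \emph{two} points \ $y_0, y_0'$ \ of the fibre in the universal cover and constructs by hand a deformation retraction of \ $\tilde N - \{y_0, y_0'\}$ \ onto the wedge \ $(\tilde N - \stackrel{\circ}{B}) \vee \partial B'$ \ followed by the quotient map onto \ $\partial B'$, \ so that the link sphere splits off for purely geometric reasons, whereas you retract the complement of the whole orbit onto the link sphere by obstruction theory, using noncompactness and Lefschetz (Borel--Moore) duality to kill the single potentially nonzero obstruction in \ $H^n(Y, \partial Y)$. \ Both are valid; the paper's construction is elementary and uniform, while yours is quicker to state but should record that the obstruction coefficients are untwisted (automatic since \ $S^{n-1}$ \ is simply connected for \ $n \geq 3$, \ and \ $S^1$ \ is aspherical when \ $n = 2$) and, like the paper's proof, tacitly assumes \ $m \geq 2$ \ for the basepoint bookkeeping you rightly flag (the paper likewise glosses over it, noting elsewhere that the vanishing conditions are basepoint-independent). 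Your \ $\K P(n')$ \ case --- null-homotopic fibre inclusion \ $S^{d-1} \hookrightarrow S^{n-1}$, \ hence \ $p_{\dis *}$ \ and therefore \ $j_{N \dis *}$ \ injective --- is exactly the argument the paper compresses into its reference to diagram \ref{equ:16} and theorem \ref{thm:3}.
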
\medskip

Theorems \ref{thm:9} and \ref{thm:2} together yield a criterion for knowing when the Nielsen number \ $N^\# (f,f)$ \ is \textit{precisely} `one desuspension short' of being a complete looseness obstruction.

\begin{cor}
	Assume condition \ref{equ:15}. Then, given \ $[f] \in \pi_m (N)$, \ the pair \ $(f,f)$ \ is loose if and only if \ $\partial_N ([f]) = 0$.
\end{cor}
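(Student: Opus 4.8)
The plan is to obtain the corollary as a purely formal splicing of the implication chains already proved in Theorems \ref{thm:9} and \ref{thm:2}, with hypothesis \ref{equ:15} supplying the one arrow those chains do not give for free. I first record that ``$(f,f)$ is loose'' is literally condition (iii) in both theorems, while ``$\partial_N([f]) = 0$'' is condition (i) of Theorem \ref{thm:9}; so the assertion is exactly the equivalence of (i) and (iii) under the standing assumption \ref{equ:15}.

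For the implication $\partial_N([f]) = 0 \Rightarrow (f,f)$ loose I would invoke only the unconditional part of Theorem \ref{thm:9}: the equivalence (i) $\Leftrightarrow$ (ii) identifies the vanishing of $\partial_N([f])$ with looseness by small deformation, and the implication (ii) $\Rightarrow$ (iii) (a small deformation is in particular a deformation) then yields looseness. No use of \ref{equ:15} is needed in this direction.

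The reverse implication $(f,f)$ loose $\Rightarrow \partial_N([f]) = 0$ is where \ref{equ:15} does the work. Starting from condition (iii), I would run the chain (iii) $\Rightarrow$ (iii') $\Leftrightarrow$ (iii'') of Theorem \ref{thm:2} to conclude $j_{N*}(\partial_N([f])) = 0$, i.e. $\partial_N([f]) \in \ker(j_{N*})$. Since $\partial_N([f])$ trivially lies in $\partial_N(\pi_m(N))$, it sits in the intersection $\partial_N(\pi_m(N)) \cap \ker(j_{N*})$, which is zero by \ref{equ:15}; hence $\partial_N([f]) = 0$.

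I expect no genuine obstacle: all the mathematical content resides in the two cited theorems, and the corollary merely routes their arrows through the shared node (iii). The sole point demanding care is bookkeeping with the directions of the arrows—several implications, notably (ii) $\Rightarrow$ (iii) and (iii) $\Rightarrow$ (iii'), are genuinely one-way in the absence of \ref{equ:15}—so each must be traversed in its valid direction, with \ref{equ:15} invoked exactly once to pass from $\partial_N([f]) \in \ker(j_{N*})$ to $\partial_N([f]) = 0$.
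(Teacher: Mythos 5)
Your proof is correct and coincides with the paper's intended argument: the paper offers no separate proof of this corollary, presenting it precisely as the splice of Theorem \ref{thm:9} (the chain (i) $\Leftrightarrow$ (ii) $\Rightarrow$ (iii) for the backward implication, needing no use of \ref{equ:15}) with Theorem \ref{thm:2} (the chain (iii) $\Rightarrow$ (iii') $\Leftrightarrow$ (iii''), after which $\partial_N([f]) \in \partial_N(\pi_m(N)) \cap \ker(j_{N*}) = 0$ by \ref{equ:15}). Your bookkeeping of which arrows are one-way and the single invocation of \ref{equ:15} is exactly right.
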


\begin{rem}
	The requirement \ref{equ:15} is sufficient but not always necessary for conditions (ii) and (iii) in theorem \ref{thm:9} to be equivalent.

	E.\,g. if \ $N = S^n$ \ and \ $m \leq 2n-3$, \ then the Wecken condition \ref{def:3} holds and (ii), (iii) are equivalent.
	However, \ $j_{N \dis *} \equiv 0$ \ and
	\begin{equation*}
		\partial_N (\pi_m (N)) \; \cap \; \ker j_{N \dis *} \; = \; \partial_N (\pi_m (S^n)) \; \cong \; \chi (S^n) \cdot \pi_{m-n}^S \,;
	\end{equation*}
	thus for even \ $n$ \ condition \ref{equ:15} fails to be satisfied whenever \ $2 \cdot \pi_{m-n}^S \not= 0$, \ e.\,g. when \ $m-n = 3, 7, 10, 11, 13, 15, 18$ or $19$ \ (cf. \cite{to}).

	On the other hand, if \ $m = 11$ \ and \ $N = \R P(6)$, \ then \ref{equ:15} holds but \ref{def:3} does'nt (cf. the discussion following \ref{cor:2}).

	We conclude that conditions \ref{def:3} and \ref{equ:15} are independant.
	This is not surprising since -- unlike condition \ref{equ:15} -- the Wecken condition remains always unaffected when we replace a manifold \ $N$ \ by a covering space \ $\tilde N$. \hfill $\Box$
\end{rem}

\bigskip

Here is another consequence of theorems \ref{thm:9} and \ref{thm:2} (it sharpens corollary 1.10 in \cite{ko6}).

\begin{cor}
	If \ $\pi_1 (N)$ \ has a nontrivial proper subgroup \ $G$ \ then for all \ $m \geq 2$ \ every map \ $f \colon S^m \to N$ \ can be  homotoped away from itself by a small deformation.
\end{cor}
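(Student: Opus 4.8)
The plan is to reduce the assertion, via Theorem~\ref{thm:9}, to the vanishing of $\partial_N([f])$ for every $[f]\in\pi_m(N)$, and to extract that vanishing from a cleverly chosen covering space on which $f$ is manifestly \emph{not} coincidence producing. Write $\Gamma:=\pi_1(N)$; by hypothesis $1\neq G\subsetneq\Gamma$. Let $\hat N\to N$ be the universal cover and let $p\colon \tilde N\to N$ be the connected cover with $\pi_1(\tilde N)=G$, so that $\tilde N=\hat N/G$ and $\pi\colon\hat N\to\tilde N$ is the quotient projection. Since $m\geq 2$ the sphere $S^m$ is simply connected, so $f$ lifts to $\hat f\colon S^m\to\hat N$ and to $\tilde f=\pi\circ\hat f\colon S^m\to\tilde N$. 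Because condition (i) of Theorem~\ref{thm:9} is unaffected by passage to a lift into a covering space, it suffices to prove $\partial_{\tilde N}([\tilde f])=0$.

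Next I would exploit that $\tilde N$ is \emph{not} simply connected (here the nontriviality of $G$ enters): then $\ker j_{\tilde N *}=0$, so condition~\ref{equ:15} holds for $\tilde N$, and Theorem~\ref{thm:2} makes conditions (ii), (iii), (iii') equivalent for all maps into $\tilde N$. Hence it is enough to show that $\tilde f$ is not coincidence producing, i.e.\ to exhibit one map $f'\colon S^m\to\tilde N$ for which $(\tilde f,f')$ is loose.

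The geometric heart of the argument, and the only place where properness of $G$ is used, is the construction of $f'$. Choose $\gamma\in\Gamma\setminus G$ (possible precisely because $G$ is proper) and regard $\gamma$ as a deck transformation of $\hat N$; set $f':=\pi\circ(\gamma\cdot\hat f)$. I claim $(\tilde f,f')$ has \emph{empty} coincidence set: indeed $\tilde f(x)=f'(x)$ would force $\pi(\hat f(x))=\pi(\gamma\cdot\hat f(x))$, i.e.\ $\gamma\cdot\hat f(x)=g\cdot\hat f(x)$ for some $g\in G$, and since $\Gamma$ acts freely on $\hat N$ this yields $\gamma=g\in G$, contradicting $\gamma\notin G$. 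Thus $\tilde f$ and $f'$ are already coincidence-free, so $\tilde f$ is not coincidence producing.

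Combining the steps: by the equivalence supplied by Theorem~\ref{thm:2} the pair $(\tilde f,\tilde f)$ is loose by small deformation, whence $\partial_{\tilde N}([\tilde f])=0$ by Theorem~\ref{thm:9}; covering invariance of condition (i) then gives $\partial_N([f])=0$, and a final application of Theorem~\ref{thm:9} shows that $(f,f)$ is loose by small deformation, i.e.\ $f$ can be homotoped away from itself by an arbitrarily small deformation. The main obstacle is conceptual rather than computational: one must resist working on the universal cover $\hat N$, where a fixed-point-free deck transformation is immediate but the target is simply connected, so the upgrade from ``not coincidence producing'' to ``loose by small deformation'' through~\ref{equ:15} is unavailable. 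The correct level is the intermediate cover $\tilde N=\hat N/G$, where nontriviality of $G$ furnishes the required non-simple-connectivity and properness of $G$ furnishes the displacing element $\gamma\notin G$; securing both features at once is exactly what makes both hypotheses on $G$ indispensable.
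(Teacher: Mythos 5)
Your proof is correct and follows essentially the same route as the paper's: pass to the intermediate covering space $\tilde N$ with $\pi_1(\tilde N)\cong G\neq 0$, observe that the lifting $\tilde f$ is not coincidence producing, and then use the injectivity of $j_{\tilde N\,\dis *}$ (via theorem \ref{thm:2}) together with $\partial_{\tilde N}=\partial_N\scr\circ\dis p_{\dis *}$ and theorem \ref{thm:9} to conclude. Your explicit construction of the coincidence-free partner $f'=\pi\scr\circ\dis(\gamma\cdot\hat f)$ with $\gamma\in\pi_1(N)\setminus G$ is precisely a worked-out version of the paper's parenthetical hint to ``pair $\tilde f$ up with a different lifting,'' since any two distinct liftings of $f$ to $\tilde N$ are automatically coincidence-free.
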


Indeed, \ $G$ \ corresponds to a nontrivial covering space \ $\tilde N$ \ of \ $N$ \ with \ $\pi_1 (\tilde N) \not= 0$.
Thus a lifting \ $\tilde f$ \ of \ $f$ \ is not coincidence producing (to see this, pair \ $\tilde f$ \ up with a different lifting);
therefore \ $\partial_{\tilde N} ([\tilde f]) \, = \, \partial_N ([f]) \, = \, 0$. \hfill $\Box$
\bigskip

Now let us analyse a few simple concrete cases where the Wecken condition actually fails to hold.
Again we consider maps into a spherical space form \ $N = S^n/G$ \ as in theorem \ref{thm:11}.
Already in the first nonstable dimension settings we encounter fascinating interrelations with other, seemingly distant, branches of topology.

\begin{thm}\label{thm:13}
	Let \ $\tilde f \, \colon \, S^{2n-2} \, \to \, S^n$ \ be a lifting of a map \ $f \, \colon \, S^{2n-2} \,\to\, N = S^n/\Z_2$.
	Assume that \ $n$ \ is even, \ $n \not= 2,4,8,128$.

	Then the pair \ $(f,f)$ \ is loose if and only if both \ $N^\# (f,f)$ \ and the Kervaire invariant \ $K ([\tilde f])$ \ vanish.
\end{thm}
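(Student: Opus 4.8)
The plan is to derive the whole statement from Theorem \ref{thm:9} by pinning down the two homomorphisms occurring in the exact sequence \ref{equ:13} for $S^n$, and then by identifying one secondary obstruction with the Kervaire invariant. Write $N=S^n/\Z_2$. Since the action is free, $\pi_1(N)=\Z_2\neq0$, and the covering $p\colon S^n\to N$ is a local diffeomorphism; hence it carries the unit tangent bundle of $S^n$ onto that of $N$ and identifies the two boundary homomorphisms of \ref{equ:13}, giving $\partial_N([f])=\partial_{S^n}([\tilde f])\in\pi_{2n-3}(S^{n-1})$. (Here $m=2n-2\geq2$, so $p_*\colon\pi_m(S^n)\to\pi_m(N)$ is an isomorphism and the two liftings differ by the deck action $a_*$, which on $[\tilde f]$ is multiplication by $(-1)^{n+1}=-1$.) Because $\pi_1(N)\neq0$, conditions (i), (ii), (iii) of Theorem \ref{thm:9} are all equivalent, so $(f,f)$ is loose if and only if $\partial_{S^n}([\tilde f])=0$; and by (iv)$\Leftrightarrow$(v), $N^\#(f,f)=0$ if and only if $E\circ\partial_{S^n}([\tilde f])=0$.

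Next I would compute $E\circ\partial_{S^n}$ and $\ker E$ explicitly. The unit tangent bundle is $ST(S^n)=V_{n+1,2}$, a sphere bundle of Euler number $\chi(S^n)=2$ (as $n$ is even). On $\pi_{2n-2}(S^n)$ the James--Hopf invariant $H$ vanishes, its target $\pi_{2n-2}(S^{2n-1})$ being zero; so the Whitehead-product correction term in the standard formula for $E\circ\partial_{S^n}$ (a multiple of $[\iota_n,\iota_n]$ weighted by $H$) drops out, leaving $E\circ\partial_{S^n}=\chi(S^n)\cdot\id=2\cdot\id$. Thus $N^\#(f,f)=0\iff 2[\tilde f]=0\iff\partial_{S^n}([\tilde f])\in\ker E$. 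The relevant segment of the EHP sequence, $\pi_{2n-1}(S^{2n-1})\xrightarrow{P}\pi_{2n-3}(S^{n-1})\xrightarrow{E}\pi_{2n-2}(S^n)$, shows $\ker E=\operatorname{im}P=\langle[\iota_{n-1},\iota_{n-1}]\rangle$, the Whitehead square $P(\iota_{2n-1})$. By Adams' solution of the Hopf invariant one problem this square is nonzero exactly when $n-1\notin\{1,3,7\}$, i.e. when $n\neq2,4,8$, and for $n-1$ odd it has order two, so $\ker E\cong\Z_2$.

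With these identifications the theorem collapses to a single assertion about $\partial_{S^n}$ on $2$-torsion: for $[\tilde f]\in\pi_{2n-2}(S^n)$ with $2[\tilde f]=0$,
\begin{equation*}
	\partial_{S^n}([\tilde f])\;=\;K([\tilde f])\cdot[\iota_{n-1},\iota_{n-1}].
\end{equation*}
Granting this, suppose first $N^\#(f,f)=0$, so $2[\tilde f]=0$ and $\partial_{S^n}([\tilde f])\in\ker E\cong\Z_2$ with $[\iota_{n-1},\iota_{n-1}]\neq0$; then $(f,f)$ loose $\iff\partial_{S^n}([\tilde f])=0\iff K([\tilde f])=0$. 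If instead $N^\#(f,f)\neq0$, then $\partial_{S^n}([\tilde f])\notin\ker E$ is nonzero, so $(f,f)$ is not loose, and the right-hand side fails as well. This is precisely the stated equivalence, and it is independent of the choice of lift since $K(-x)=K(x)$ for the $\Z_2$-valued Kervaire invariant and $a_*[\tilde f]=-[\tilde f]$.

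The heart of the argument, and the step I expect to be the main obstacle, is the displayed formula. I would read $\partial_{S^n}([\tilde f])$ geometrically as the obstruction to a nowhere-zero section of $\tilde f^*TS^n$ over $S^{2n-2}$: its primary part is the Euler term $2[\tilde f]$, and once that vanishes the residual obstruction lives in $\ker E\cong\Z_2$. Using the normal-bordism description of this secondary looseness obstruction (cf. \cite{ko7}, 5.6--5.10) one represents it by the framed coincidence manifold of dimension $n-2$ and identifies its $\Z_2$-value with the Arf--Kervaire invariant of that framed manifold, namely $K([\tilde f])$; dually, this says that the Whitehead square $[\iota_{n-1},\iota_{n-1}]=P(\iota_{2n-1})$ is hit by $\partial_{S^n}$ exactly on classes of Kervaire invariant one (compare James' analysis of $V_{n+1,2}$ and the Hopf--James invariants in \cite{j}). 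The case $n=128$ must be excluded precisely here, since it corresponds to the stem $n-2=126$, where the existence of a Kervaire invariant one element — equivalently, whether $[\iota_{127},\iota_{127}]$ lies in $\operatorname{im}\partial_{S^{128}}$ — was not known, so the formula could not be certified. The remaining exclusions $n=2,4,8$ are already forced by the second paragraph: these are exactly the dimensions in which $\ker E$ degenerates to $0$, so that the $K$-condition would become spurious and the stated equivalence would fail.
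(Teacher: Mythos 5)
Your reduction is exactly the one the paper uses in section \ref{sec:6}: pass to the double covering to get $\partial_N([f])=\partial_{S^n}([\tilde f])$, invoke Theorem \ref{thm:9} (with $\pi_1(N)\neq 0$, so (i)--(iii) are equivalent) to translate looseness into $\partial_{S^n}([\tilde f])=0$ and $N^\#(f,f)=0$ into $E\circ\partial_{S^n}([\tilde f])=0$, and use the EHP sequence together with Adams' Hopf invariant one theorem to obtain $\partial=2E^{-1}$ (your $E\circ\partial_{S^n}=2\cdot\id$ is the same statement) and $\ker E=\Z_2\cdot[\iota_{n-1},\iota_{n-1}]$ for even $n\neq 2,4,8$. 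Up to that point you agree with the paper. The gap is your displayed formula $\partial_{S^n}([\tilde f])=K([\tilde f])\cdot[\iota_{n-1},\iota_{n-1}]$ on the $2$-torsion, which you support only with a heuristic. That identity is precisely the deep input which the paper itself does not prove but imports from Gon\c{c}alves--Randall (\cite{gr1}, \S 3; compare \cite{gr2}): nonvanishing of the secondary obstruction is the question whether the Whitehead square can be halved, and its equivalence with order-two Kervaire-invariant-one classes rests on Browder's theorem, on Barratt--Jones--Mahowald-type results, and, for the pattern of exceptional $n$, on Hill--Hopkins--Ravenel. Your sketch supplies none of this.

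Concretely, the step ``represent the secondary obstruction by the framed coincidence manifold and identify its $\Z_2$-value with the Arf--Kervaire invariant'' fails as stated: once $N^\#(f,f)=0$, i.e. $2[\tilde f]=0$, the stabilized framed bordism class of the coincidence manifold, $E^\infty(\partial_{S^n}([\tilde f]))=2\,E^\infty([\tilde f])$, is zero, so the residual obstruction lives in the \emph{unstable} kernel $\ker E\cong\Z_2$, which dies under suspension; the Arf invariant is an invariant of the stable framed bordism class and cannot read off an unstable class directly. (The correct mechanism is unstable: since $H([\iota_{n-1},\iota_{n-1}])=\pm2$, one has $\partial([\tilde f])\neq 0$ if and only if a desuspension $\alpha$ of $[\tilde f]$ has odd Hopf invariant, and relating odd $H(\alpha)$ to $K([\tilde f])=1$ is exactly the Barratt--Jones--Mahowald/Gon\c{c}alves--Randall theorem.) Your own handling of $n=128$ exposes the problem: a uniform bordism-theoretic identification, were it a proof, would apply at $n=128$ just as at $n=16,32,64$, whereas the validity of your formula in the stem $n-2=126$ \emph{was} the then-open strong Kervaire invariant one question --- which is the very reason the theorem excludes that case. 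So your skeleton is correct and matches the paper, but the heart of the theorem is asserted rather than proved; to close the argument you must cite (or reprove) \cite{gr1}, \S 3, as the paper does.
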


Originally M. Kervaire introduced his ($\Z_2$-valued) invariant in order to exhibit a triangulable closed manifold which does not admit any differentiable structure (cf. \cite{k}).
Subsequently M. Kervaire and J. Milnor used it in their classification of exotic spheres (cf. \cite{km}).
Then W. Browder showed that \ $K([\tilde f]) = 0$ \ whenever \ $n$ \ is not a power of 2 (cf. \cite{b1}).
But for \ $n = 16,\ 32$ or 64 \ there exist maps \ $f$ \ and \ $\tilde f$ \ as in corollary \ref{cor:2} such that \ $K([\tilde f]) = 1$ \ but \ $N^\# (f,f) = 0$ \ (and hence the seven equivalent conditions (i), \dots (vii) in \ref{cor:2} are all satisfied).
On the other hand, according to the spectacular recent results of M. Hill, M. Hopkins and D. Ravenel \ $K ([ \tilde f]) \equiv 0$ \ whenever \ $n > 128$ \ (cf. \cite{hhr}).
Only the case \ $n = 128$ \ remains open.
This leads us to the following Wecken theorem.
\bigskip

\begin{cor}\label{cor:3}
	Let \ $G$ \ be any nontrivial finite group acting freely and smoothly on \ $S^n$, \ $n\geq 1$.

	Then \ $MCC (f_1, f_2) = N^\# (f_1,f_2)$ \ for all maps \ $f_1, f_2 \, \colon \, S^{2n-2} \to N = S^n/G$ \ \textbf{except} precisely if \ $n = $ 16, 32 or 64 (or maybe 128).
\end{cor}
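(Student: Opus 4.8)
The plan is to reduce the assertion, uniformly in the group $G$, to a single homotopy-theoretic question about the Wecken condition for the pair $(2n-2,S^n)$, and then to read off the exceptional dimensions from Browder's theorem together with the Hill--Hopkins--Ravenel solution of the Kervaire invariant one problem.

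First I would reduce everything to the sphere $S^n$. The case $n=1$ is degenerate: a nontrivial finite group acting freely on $S^1$ is cyclic, so $N=S^1/G\cong S^1$ is itself a sphere and $MCC=N^\#$ already follows from Theorem~\ref{thm:7}. For $n\geq2$ the orbit manifold $N=S^n/G$ is a non-spherical spherical space form (since $\#G\geq2$), so the corollary immediately following Theorem~\ref{thm:9} applies: the pair $(S^{2n-2},N)$ enjoys the full Wecken property $MCC\equiv N^\#$ if and only if the Wecken condition of Definition~\ref{def:3} holds for $(2n-2,N)$. Since this condition is unaffected by passage to a covering space (cf.\ the Remark following Theorem~\ref{thm:2}), it coincides with the Wecken condition for $(2n-2,S^n)$, namely
\begin{equation*}
	0\;=\;\partial_{S^n}\bigl(\pi_{2n-2}(S^n)\bigr)\;\cap\;\ker\bigl(E\colon\pi_{2n-3}(S^{n-1})\to\pi_{2n-2}(S^n)\bigr).
\end{equation*}
In particular this makes the answer independent of $G$, and the whole corollary is reduced to deciding for which $n$ the displayed intersection is nonzero.

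Next I would settle the easy dimensions. For odd $n$ one has $\chi(S^n)=0$, the unit tangent bundle $ST(S^n)=V_{n+1,2}$ admits a section, and hence $\partial_{S^n}\equiv0$; the condition holds trivially. For even $n$ I would compute $\ker E$ from the exactness of the $EHP$ sequence of $S^{n-1}$: it is the image of the Whitehead--product homomorphism, i.e.\ the cyclic subgroup generated by $[\iota_{n-1},\iota_{n-1}]\in\pi_{2n-3}(S^{n-1})$. By the Hopf invariant one theorem this Whitehead square vanishes exactly when $n-1\in\{1,3,7\}$, so for $n\in\{2,4,8\}$ we get $\ker E=0$ and the condition holds again. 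Thus these dimensions are \emph{not} exceptional, even though Kervaire invariant one classes already live in $\pi_2^S$ and $\pi_6^S$; this is precisely why $n=4,8$ drop out of the final list. For every remaining even $n$ the group $\ker E$ has order $2$, generated by $[\iota_{n-1},\iota_{n-1}]$, and the condition fails exactly when this Whitehead square lies in the image of $\partial_{S^n}$.

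The heart of the argument is to identify this last, purely unstable, requirement with the stable Kervaire invariant, and this is the step I expect to be the main obstacle. Here I would exploit the $G$-independence established above together with Theorem~\ref{thm:13} for $G=\Z_2$. Since $MCC(f,f)$ and $N^\#(f,f)$ take only the values $0,1$ and $MCC=0\Rightarrow N^\#=0$, the Wecken property for $(S^{2n-2},S^n/\Z_2)$ fails precisely when some $f$ has $N^\#(f,f)=0$ but $MCC(f,f)=1$; for even $n\notin\{2,4,8,128\}$, Theorem~\ref{thm:13} says $(f,f)$ is loose iff $N^\#(f,f)=0$ and $K([\tilde f])=0$, so such an $f$ exists if and only if some lift $[\tilde f]\in\pi_{2n-2}(S^n)$ carries Kervaire invariant one, equivalently $\pi_{n-2}^S$ contains a Kervaire invariant one class (correspondingly $[\iota_{n-1},\iota_{n-1}]\in\operatorname{im}\partial_{S^n}$). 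This is the delicate point where the geometry of the tangent sphere bundle meets the stable homotopy of spheres, and it is the content supplied by Theorem~\ref{thm:13}. It then remains only to invoke the known values: by Browder (\cite{b1}) a Kervaire invariant one class in $\pi_{n-2}^S$ forces $n$ to be a power of $2$, such classes are known to exist for $n\in\{4,8,16,32,64\}$, and Hill--Hopkins--Ravenel (\cite{hhr}) exclude every $n>128$, leaving only $n=128$ undecided. Discarding $n=4,8$ (already handled) leaves exactly $n\in\{16,32,64\}$, together with the open case $n=128$, as the dimensions where the Wecken condition for $(2n-2,S^n)$ fails; combining this with the reduction of the first paragraph yields $MCC\equiv N^\#$ for $(S^{2n-2},S^n/G)$ for every nontrivial finite $G$ and every $n\geq1$ outside this list, which is the assertion.
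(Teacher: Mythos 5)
Your architecture is in fact the paper's own: the proof in section~\ref{sec:6} also reduces everything, via \ $\partial_{\tilde N} = \partial_N \circ p_{\dis *}$ \ for coverings, to the Wecken condition \ref{def:3} for \ $(2n-2,S^n)$, kills odd \ $n$ \ by \ $\partial_{S^n} \equiv 0$, kills \ $n = 2,4,8$ \ by Adams' Hopf invariant one theorem applied to \ $\ker E = \Z_2 \cdot w_{n-1}$ \ in the EHP sequence, and then invokes Browder and Hill--Hopkins--Ravenel. The one structural difference is that the paper proves theorem \ref{thm:13} and this corollary \emph{simultaneously}, from the short exact sequence \ $0 \to \Z_2 \cdot w_{n-1} \to \pi_{2n-3}(S^{n-1}) \xrightarrow{E} \pi_{2n-2}(S^n) \to 0$, \ the explicit formula \ $\partial = 2E^{-1}$, \ and the equivalence of \cite{gr1}: \ $w_{n-1}$ \ is halvable if and only if \ $\pi_{n-2}^S$ \ contains an element \emph{of order two} with Kervaire invariant one. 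Treating theorem \ref{thm:13} as a black box, as you do, is legitimate for a corollary stated after it.

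The genuine gap sits exactly at your self-declared delicate point. From theorem \ref{thm:13}, Wecken failure for \ $(S^{2n-2}, S^n/\Z_2)$ \ is equivalent to the existence of a lift with \ $N^\#(f,f) = 0$ \ \emph{and} \ $K([\tilde f]) = 1$; and since \ $\partial = 2E^{-1}$ \ in this range, \ $E \circ \partial$ \ is multiplication by \ $2$ \ on \ $\pi_{2n-2}(S^n)$, so by the equivalence (iv)$\Leftrightarrow$(v) of theorem \ref{thm:9} the condition \ $N^\#(f,f) = 0$ \ says precisely that \ $[\tilde f]$ \ has order at most two. Your sentence ``equivalently $\pi_{n-2}^S$ contains a Kervaire invariant one class'' silently discards this order constraint: what is needed is an order-two element of Kervaire invariant one -- the \emph{strong} Kervaire invariant one problem (the very subject of \cite{gr2}) -- which is not formally equivalent to the ordinary one. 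Your forward direction survives (failure forces some \ $K=1$ \ class, whence \ $n = 2^j$ \ by \cite{b1} and \ $n \leq 128$ \ by \cite{hhr}), but your converse for \ $n = 16, 32, 64$ \ requires order-two Kervaire-one elements of \ $\pi_{14}^S$, $\pi_{30}^S$, $\pi_{62}^S$. These are indeed known (e.\,g. \ $\sigma^2$ \ in dimension 14; \ $\pi_{30}^S \cong \Z_2$; \ an order-two \ $\theta_5$), and they are exactly what the paper imports from \cite{gr1}, \cite{gr2}; but your citation, as worded, invokes the wrong statement, so the step must be repaired by restoring the order-two condition throughout. (A small side remark: for \ $n=1$ \ you apply theorem \ref{thm:7} to maps from \ $S^{2n-2} = S^0$, which lies outside the paper's standing convention \ $m \geq 1$; the odd-$n$ argument \ $\partial_{S^n} \equiv 0$ \ is the cleaner way to dispose of this degenerate case.)
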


In the second nonstable dimension setting there are even infinitely many exceptional combinations of \ $m$ \ and \ $n$.

\begin{thm}\label{thm:14}
	Let \ $\tilde f\,\colon\, S^{2n-1} \to S^n$ \ be a lifting of a map \ $f \,\colon\, S^{2n-1} \to N = S^n/\Z_2$.
	Assume that \ $n \equiv 2(4), \; n \geq 6$.

	Then \ $(f,f)$ \ is loose if and only if \ $N^\# (f,f) = 0$ \ and, in addition, the Hopf invariant \ $H (\tilde f)$ \ of \ $\tilde f$ \ is divisible by 4.
\end{thm}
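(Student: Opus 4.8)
The plan is to reduce the statement to a computation in the homotopy groups of spheres and then to run an EHP/tangent-bundle argument parallel to the Kervaire case \ref{thm:13}.

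First I would apply Theorem \ref{thm:9}. Since $\pi_1(S^n/\Z_2) = \Z_2 \neq 0$, the parenthetical equivalence there shows that conditions (ii) and (iii) coincide, so $(f,f)$ is loose if and only if $\partial_N([f]) = 0$. Because $\partial_N$ and the invariant $E \circ \partial_N$ are unaffected by passage to a covering (the remark following \ref{thm:9}), I may read everything off the lift $[\tilde f] \in \pi_{2n-1}(S^n)$: looseness is equivalent to $\partial_{S^n}([\tilde f]) = 0$, while $N^\#(f,f) = 0$ is equivalent to $E \circ \partial_{S^n}([\tilde f]) = 0$ by condition (v). Thus the theorem becomes the purely homotopy-theoretic assertion that, for $\tilde f \in \pi_{2n-1}(S^n)$ with $n \equiv 2\,(4)$ and $n \geq 6$,
\[
 \partial_{S^n}([\tilde f]) = 0 \quad\Longleftrightarrow\quad E\circ\partial_{S^n}([\tilde f]) = 0 \ \text{ and }\ H(\tilde f) \equiv 0\,(4).
\]

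Next I would analyse this equivalence through the exact sequence \ref{equ:13} for $N = S^n$, whose total space is the unit tangent bundle $ST(S^n) = V_{n+1,2}$, together with the EHP sequence of $S^{n-1}$. The forward implication is essentially formal: $\partial_{S^n} = 0$ gives $E\circ\partial_{S^n} = 0$ at once, and it forces $4 \mid H(\tilde f)$ because $\partial_{S^n}$ carries precisely the reduction of $H$ modulo $4$. The two factors of $2$ have transparent origins: one is the Euler number $\chi(S^n) = 2$ (so that $\partial_{S^n}(\iota_n) = 2$ and $H$ has image $2\Z$, there being no Hopf invariant one for these $n$); the second is the order-two Whitehead product $[\iota_{n-1},\iota_{n-1}]$, which for $n \equiv 2\,(4)$, $n \geq 6$ is nonzero and governs the kernel of $E$ on $\pi_{2n-2}(S^{n-1})$. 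Concretely, for $n = 6$ the relevant groups are $\pi_{11}(S^6) \cong \Z$ with $\frac12 H$ an isomorphism, $\pi_{10}(S^5) \cong \Z_2$ and $\pi_{10}(V_{7,2}) = 0$; here $E \equiv 0$ and $\partial_{S^6}$ is the surjection $\Z \to \Z_2$ given by $\frac12 H \bmod 2$, so $\partial_{S^6}([\tilde f]) = 0$ iff $4 \mid H(\tilde f)$, exactly as needed (and matching the $(11,6)$ discussion after \ref{cor:2}).

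The remaining, and hardest, step is the converse in general: assuming $E\circ\partial_{S^n}([\tilde f]) = 0$ and $4 \mid H(\tilde f)$, to conclude $\partial_{S^n}([\tilde f]) = 0$. The obstacle is to control $\partial_{S^n}$ on all of $\pi_{2n-1}(S^n)$ --- not merely its suspension --- uniformly for $n \equiv 2\,(4)$. I expect to handle this exactly as in \ref{thm:13}, by invoking James's description of the tangent-bundle connecting homomorphism in terms of the (James--)Hopf invariants: on $\ker(E) \subseteq \pi_{2n-2}(S^{n-1})$ the map $\partial_{S^n}$ is detected by $\frac12 H \bmod 2$, while on the suspension complement it is detected by $E\circ\partial_{S^n}$ itself. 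Feeding in the metastable computations of $\pi_{2n-1}(S^n)$, $\pi_{2n-2}(S^{n-1})$ and $\pi_{2n-2}(V_{n+1,2})$ then closes the equivalence. The principal risk is that for large $n$ extra suspension-kernel contributions to $\pi_{2n-2}(S^{n-1})$, invisible when $n = 6$, might interfere; verifying that they never disturb the two distinguished factors of $2$ is the crux of the argument.
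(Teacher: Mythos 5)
Your reduction of the theorem to pure homotopy theory is exactly the paper's: by theorem \ref{thm:9}, since $\pi_1(N) \neq 0$, looseness of $(f,f)$ is equivalent to $\partial_{S^n}([\tilde f]) = 0$, while $N^\#(f,f) = 0$ is equivalent to $E \circ \partial_{S^n}([\tilde f]) = 0$; and your $n = 6$ check agrees with the paper's $(11,6)$ discussion. Beyond that, however, the proposal has a genuine gap --- one you flag yourself (``I expect to handle this\dots'', ``the principal risk\dots'') --- and it is compounded by a misidentification of the key element. The generator of $\ker\bigl(E \colon \pi_{2n-2}(S^{n-1}) \to \pi_{2n-1}(S^n)\bigr)$ is the Whitehead product $v_{n-1} = [\iota_{n-1}, \eta_{n-1}]$; the Whitehead square $[\iota_{n-1}, \iota_{n-1}]$ you invoke lies in $\pi_{2n-3}(S^{n-1})$ and is the element governing theorem \ref{thm:13}, one dimension setting lower. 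The paper closes both implications with four facts from James (\cite{ja2}, lemmas 3.5, 7.4, 5.2 and 3.6): for $n \equiv 2\,(4)$, $n \geq 6$ the EHP sequence truncates to
\begin{equation*}
	0 \to \Z_2 v_{n-1} \to \pi_{2n-2}(S^{n-1}) \xrightarrow{E} \pi_{2n-1}(S^n) \xrightarrow{\frac12 H} \Z \to 0;
\end{equation*}
moreover $\partial w_n = v_{n-1}$, $\partial \circ E$ is multiplication by $2$, and --- crucially --- $v_{n-1}$ \emph{cannot be halved}: $v_{n-1} \notin 2\pi_{2n-2}(S^{n-1})$. Since $H(w_n) = \pm 2$, one gets the splitting $\pi_{2n-1}(S^n) \cong E(\pi_{2n-2}(S^{n-1})) \oplus \Z w_n$, so $[\tilde f] = E(\alpha) \pm \frac12 H(\tilde f)\, w_n$ and
\begin{equation*}
	\partial_{S^n}([\tilde f]) \;=\; 2\alpha + \frac12 H(\tilde f)\, v_{n-1}.
\end{equation*}

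From this formula both directions are two-line computations, but \emph{each} direction needs the non-halvability, not only the converse. Your claim that the forward implication is ``essentially formal'' because ``$\partial_{S^n}$ carries precisely the reduction of $H$ modulo $4$'' is false as stated: $\partial_{S^n}([\tilde f])$ also records $2\alpha$ (which is exactly why $N^\#(f,f)$ enters the statement at all), and if $\frac12 H(\tilde f)$ were odd one could still have $\partial_{S^n}([\tilde f]) = 0$, provided $v_{n-1} = -2\alpha$, i.\,e. provided $v_{n-1}$ were halvable. Likewise, in the converse, $N^\#(f,f) = 0$ gives $E(2\alpha) = 0$, which only places $2\alpha$ in $\ker E = \Z_2 v_{n-1}$; non-halvability is what forces $2\alpha = 0$, and then $4 \mid H(\tilde f)$ kills the $v_{n-1}$-term, so $\partial_{S^n}([\tilde f]) = 0$. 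The ``extra suspension-kernel contributions for large $n$'' you worry about are exactly what James's identification $\ker E = \Z_2 v_{n-1}$ rules out. Since you neither prove nor cite that identification, nor the non-divisibility of $v_{n-1}$ by $2$, the proposal establishes neither implication as it stands; supplying the references to \cite{ja2} would complete it along precisely the paper's lines.
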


These two conditions are independant.
Indeed, the Hopf invariant homomorphism
\begin{equation*}
	 H \; \colon \; \pi_{2n-1} (S^n) \; = \; \Z \cdot [\iota_n, \iota_n] \oplus \operatorname{torsion} \; \to \Z
\end{equation*}
maps onto \ $2\Z$ \ and kills torsion while \ $N^\# (f,f)$ \ depends only on the torsion part of \ $[\tilde f]$.

\begin{cor}\label{cor:4}
	Let \ $G$ \ be as in \ref{cor:3}.

	Then \ $MCC (f_1,f_2) \, = \, N^\# (f_1,f_2)$ \ for all maps \ $f_1,f_2 \,\colon\, S^{2n-1} \to N= S^n/G$ \ \textbf{except} precisely when \ $n \equiv 2(4)$ \ and \ $n \geq 6$ \ (and therefore \ $G \cong \Z_2$).
\end{cor}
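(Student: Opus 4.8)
The plan is to reduce the full Wecken property for $(S^{2n-1},N)$ to the Wecken condition of Definition~\ref{def:3}, to transport the latter to the universal cover $S^n$, and then to split according to the residue of $n$ modulo~$4$. First I would apply Theorem~\ref{thm:11}: since $MCC(f_1,f_2)\neq N^\#(f_1,f_2)$ forces $f_1\sim f_2$, every violation of the full Wecken property already shows up in the selfcoincidence setting. For $n\geq2$ the space $N=S^n/G$ is not a sphere, so the corollary following Theorem~\ref{thm:9} applies and yields that $(S^{2n-1},N)$ enjoys the full Wecken property $MCC\equiv N^\#$ if and only if the Wecken condition holds for $(2n-1,N)$. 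Because $\partial_N$ and $E$ are unchanged under passage to a covering space (remark after Theorem~\ref{thm:9}), this condition is in turn equivalent to the Wecken condition for $(2n-1,S^n)$; moreover, whenever $n$ is even the only nontrivial free action on $S^n$ is the antipodal one, so $G\cong\Z_2$ and $N=\R P(n)$, exactly as claimed. The case $n=1$ gives $S^1/G\cong S^1$ with $m=n=1$ and is settled by Theorem~\ref{thm:7}. Thus everything reduces to deciding, for $m=2n-1$, whether
\begin{equation*}
	\partial_{S^n}\bigl(\pi_{2n-1}(S^n)\bigr)\;\cap\;\ker\bigl(E\colon\pi_{2n-2}(S^{n-1})\to\pi_{2n-1}(S^n)\bigr)\;=\;0 .
\end{equation*}

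Next I would clear the easy ranges straight from Theorem~\ref{thm:9}. For odd $n$ one has $\chi(S^n)=0$, so the Wecken condition holds (and this accounts for all odd $n$, irrespective of $G$); and for $n\leq5$ one has $m=2n-1\leq n+4$ with $(m,n)\neq(10,6)$, so it holds as well. Only the even values $n\geq6$ remain, and these I would treat according to $n\bmod4$.

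For $n\equiv2\,(4)$, $n\geq6$ (hence $G\cong\Z_2$ and $N=\R P(n)$), I would produce an offending map directly. Using that the Hopf invariant $H\colon\pi_{2n-1}(S^n)\to\Z$ maps onto $2\Z$ and annihilates torsion, pick $\tilde f\colon S^{2n-1}\to S^n$ with trivial torsion part and $H(\tilde f)=2$, for instance $[\tilde f]=[\iota_n,\iota_n]$, and set $f=p\circ\tilde f$ for the covering projection $p\colon S^n\to\R P(n)$. Since $N^\#(f,f)$ depends only on the torsion part of $[\tilde f]$ we obtain $N^\#(f,f)=0$; but $4\nmid H(\tilde f)$, so Theorem~\ref{thm:14} shows that $(f,f)$ is not loose, i.e.\ $MCC(f,f)=1>0=N^\#(f,f)$. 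Hence the full Wecken property fails precisely in this range.

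Finally, the range $n\equiv0\,(4)$, $n\geq8$ is where I expect the main obstacle, since here the intersection above must be shown to vanish. Via the EHP sequence, $\ker\bigl(E\colon\pi_{2n-2}(S^{n-1})\to\pi_{2n-1}(S^n)\bigr)$ is the image of $P\colon\pi_{2n}(S^{2n-1})\to\pi_{2n-2}(S^{n-1})$, hence at most $\pi_{2n}(S^{2n-1})\cong\Z_2$, generated by the composite $[\iota_{n-1},\iota_{n-1}]\circ\eta$. The crux is to prove that this single class does not contribute: concretely that $[\iota_{n-1},\iota_{n-1}]\circ\eta$ vanishes in $\pi_{2n-2}(S^{n-1})$ (so that $\ker E=0$ and the Wecken condition is automatic), in sharp contrast with $n\equiv2\,(4)$, where the case $n=6$ already exhibits the class as nonzero and in the image of $\partial_{S^n}$ (there $\pi_{10}(S^5)\cong\Z_2$, $\partial_{S^6}$ is onto and $E\equiv0$). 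Since $n-1$ is odd one has $2[\iota_{n-1},\iota_{n-1}]=0$, so this reduces to a $2$-primary computation whose outcome is governed by $n-1\equiv3$ versus $n-1\equiv1\,(4)$ --- equivalently by the divisibility of the Hopf invariant by $4$ already visible in Theorem~\ref{thm:14}. Carrying out this computation (and, in any sub-case where the class should survive, checking that it is not hit by $\partial_{S^n}$) completes the proof; the delicate $2$-primary homotopy bookkeeping is the part demanding genuine care.
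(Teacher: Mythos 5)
Your reduction is exactly the paper's: Theorem \ref{thm:11} confines any failure to homotopic pairs, the corollary to Theorem \ref{thm:9} translates the full Wecken property (for $N$ not a sphere, which is where the hypothesis of \ref{cor:3} that $G$ be nontrivial enters) into the Wecken condition \ref{def:3}, covering-space invariance of $\partial_N$ moves everything to $S^n$, and odd $n$ and $n\le5$ are cleared by Theorem \ref{thm:9}. Your construction of an offending pair for $n\equiv2\,(4)$, $n\ge6$, via an odd multiple of $w_n=[\iota_n,\iota_n]$ (torsion part zero, so $N^\#(f,f)=0$, while $H(w_n)=\pm2\not\equiv0\,(4)$, so Theorem \ref{thm:14} forbids looseness and $MCC(f,f)=1$) is likewise the paper's converse step in section \ref{sec:6}, and citing Theorem \ref{thm:14} here is legitimate, since the statement is presented as its corollary. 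The genuine gap is the case you yourself flag as outstanding: for $n\equiv0\,(4)$, $n\ge8$, you correctly reduce the Wecken condition to the vanishing of $v_{n-1}=[\iota_{n-1},\eta_{n-1}]=[\iota_{n-1},\iota_{n-1}]\circ\eta_{2n-3}$ in $\pi_{2n-2}(S^{n-1})$, but then leave the ``$2$-primary bookkeeping'' undone. This is not a deferrable verification: it is a theorem of I.~James that for odd $q\ge3$ the class $[\iota_q,\eta_q]$ vanishes precisely when $q\equiv3\,(4)$, and the paper disposes of this case exactly by citing \cite{ja2}, 3.5 (and invokes \cite{ja2}, lemmas 5.2, 7.4 and 3.6 for the companion facts $\partial w_n=v_{n-1}$ and $v_{n-1}\notin2\pi_{2n-2}(S^{n-1})$ that underlie the $n\equiv2\,(4)$ analysis). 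Without this input your argument proves failure where failure occurs but never establishes the Wecken property in the remaining even dimensions, i.e.\ it does not justify the word ``precisely'' in the statement.

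Two smaller points. Your parenthetical fallback (``check that a surviving class is not hit by $\partial_{S^n}$'') cannot rescue a nonzero $v_{n-1}$: since $\partial w_n=v_{n-1}$ for all even $n$, the class always lies in $\partial(\pi_{2n-1}(S^n))\cap\ker E$ once it is nonzero, so the dichotomy is decided by the vanishing of $v_{n-1}$ alone. And the assertion that for even $n$ ``the only nontrivial free action on $S^n$ is the antipodal one, so $N=\R P(n)$'' is false --- there exist free involutions on even-dimensional spheres whose quotients are not standard projective spaces; only $G\cong\Z_2$ follows (from $\#G\cdot\chi(N)=\chi(S^n)=2$), which is fortunately all that your argument, and Theorem \ref{thm:14} as stated for an arbitrary quotient $S^n/\Z_2$, actually uses.
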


The results \ref{thm:13} - \ref{cor:4} \ are proved with the help of the EHP-sequence.
They center around the question whether certain Whitehead products \ $[\iota_{n-1}, \iota_{n-1}]$ \ and \ $[\iota_{n-1}, \eta_{n-1}]$ \ in the kernel of \ $E$ \ can be halved, i.\,e. lie in \ $2 \cdot \pi_{\dis *} (S^{n-1})$.

In the next six nonstable dimension settings similar Wecken questions still allow fairly complete answers (cf. \cite{kr}).
These tend to get less and less homogeneous as the degree \ $m - 2n + 2$ \ of nonstability (and hence homotopy theoretical complications) increase.
However, we obtain the following simple result.

\begin{thm}
	Let \ $G$ \ be any finite group acting freely and smoothly on \ $S^n$, $n \geq 1$, $n \not= 4,6$. \ If \ $m = 2n +2$ \ or \ $m = 2n + 3$, \ then \
	$MCC (f_1,f_2) = N^\# (f_1, f_2)$ \ for all maps \ $f_1, f_2 \, \colon \, S^m \to S^n/G$.
\end{thm}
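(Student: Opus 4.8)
The plan is to reduce the full Wecken property to the purely homotopy-theoretic Wecken condition of Definition \ref{def:3}, and then to verify that condition by an EHP computation in which the relevant obstruction lands in a vanishing stable stem. First I would invoke the corollary characterizing the full Wecken property for spherical space forms, which for nontrivial \ $G$ \ (and \ $n \geq 2$) identifies \ $MCC \equiv N^\#$ \ for \ $(S^m, S^n/G)$ \ with the Wecken condition for \ $(m, S^n/G)$; the case of trivial \ $G$, where \ $S^n/G = S^n$, \ is already settled by Theorem \ref{thm:7}. Since the Wecken condition is unaffected by passage to a covering space, the condition for \ $(m, S^n/G)$ \ coincides with the one for \ $(m, S^n)$. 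Everything thus reduces to showing
\[
	\partial_{S^n}\bigl(\pi_m(S^n)\bigr) \; \cap \; \ker\bigl(E \colon \pi_{m-1}(S^{n-1}) \to \pi_m(S^n)\bigr) \; = \; 0
\]
for \ $m = 2n+2$ \ and \ $m = 2n+3$, \ with \ $n \neq 4,6$.

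For odd \ $n$ \ (in particular \ $n=1$) the Euler characteristic \ $\chi(S^n)$ \ vanishes, so \ $TS^n$ \ admits a nowhere-zero section and \ $\partial_{S^n} \equiv 0$ \ in the sequence \ref{equ:13}; the intersection is then trivially \ $0$, \ exactly as recorded in Theorem \ref{thm:9}. So I may assume \ $n$ \ is even.

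For even \ $n$ \ the engine is the (2-local) James EHP sequence, which is exact in every degree and identifies
\[
	\ker\bigl(E \colon \pi_{m-1}(S^{n-1}) \to \pi_m(S^n)\bigr) \; = \; \operatorname{im}\bigl(P \colon \pi_{m+1}(S^{2n-1}) \to \pi_{m-1}(S^{n-1})\bigr),
\]
where \ $P$ \ is the Whitehead-product homomorphism; since at odd primes \ $\Omega S^n$ \ splits off \ $S^{n-1}$, \ the suspension \ $E$ \ is there split injective and this kernel is \ $2$-primary. For \ $m = 2n+2$ \ (resp. \ $m = 2n+3$) the source \ $\pi_{m+1}(S^{2n-1})$ \ lies in the stable \ $4$-stem (resp. \ $5$-stem). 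Because \ $\pi_4^S = \pi_5^S = 0$ \ and these groups are already stable once \ $2n-1$ \ is large, the kernel vanishes and the Wecken condition holds automatically. This stable vanishing is what produces the uniform, `simple' conclusion, and it is precisely the contrast with the neighboring settings \ $m = 2n-2, 2n-1$ \ (Theorems \ref{thm:13} and \ref{thm:14}), where the source sits in the nonzero stems \ $\pi_0^S, \pi_1^S$ \ and genuine exceptions appear.

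The delicate part is the low-dimensional end. The stability that kills \ $\ker E$ \ degenerates for small \ $n$, so there one must compute \ $\pi_{m-1}(S^{n-1})$ \ and the surviving Whitehead products \ $[\iota_{n-1},\iota_{n-1}]$ \ and \ $[\iota_{n-1},\eta_{n-1}]$ \ explicitly and check that \ $\operatorname{im}\,\partial_{S^n}$ \ misses whatever remains of the kernel. The dimensions \ $n = 4$ \ (where \ $S^{n-1} = S^3$ \ is a Lie group and Hopf-invariant-one phenomena intervene) and \ $n = 6$ \ (where the exceptional free summand of \ $\pi_{11}(S^6)$ \ and its neighbors distort the relevant stems) are exactly the residual values whose bookkeeping is intricate enough to be excluded from the present clean statement, the exhaustive case analysis being carried out in \cite{kr}. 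I expect this explicit unstable homotopy computation to be the main obstacle, the reduction and the stable-range argument being essentially formal.
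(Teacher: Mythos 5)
Your proposal is correct and follows essentially the same route as the paper, whose entire proof reads: ``In view of theorems \ref{thm:11} and \ref{thm:9} this follows from the EHP-Sequence and the fact that the stable homotopy groups $\pi_4^S$ and $\pi_5^S$ vanish'' --- i.\,e. reduce \ $MCC \equiv N^\#$ \ for \ $(S^m, S^n/G)$ \ to the Wecken condition \ref{def:3} (via the spherical-space-form results, with the condition invariant under passage to the covering \ $S^n$), then kill \ $\partial(\pi_m(S^n)) \cap \ker E$ \ by placing \ $\ker E = \operatorname{im}\bigl(P\colon \pi_{m+1}(S^{2n-1}) \to \pi_{m-1}(S^{n-1})\bigr)$ \ in the stable $4$- and $5$-stems. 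Your closing speculation about \emph{why} \ $n = 4, 6$ \ are excluded is not quite accurate (e.\,g. for \ $n=4$ \ the splitting \ $\Omega S^4 \simeq S^3 \times \Omega S^7$ \ makes \ $E$ \ injective, so Hopf-invariant-one phenomena help rather than obstruct), but since those values are excluded from the statement this does not affect the validity of your argument.
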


In view of theorems 1.12 and \ref{thm:9} this follows from the EHP-Sequence and the fact that the stable homotopy groups \ $\pi_4^S$ \ and \
$\pi_5^S$ \ vanish. \hfill $\Box$
\bigskip

After having discussed at length whether Nielsen numbers are equal to minimum numbers let us try to actually determine their values.
In spite of its apparent strength (`at most one desuspension short of being a complete looseness obstruction'), the Nielsen number \ $N^\# (f,f)$ \ turns out to vanish in a large number of cases (compare \cite{ko8}, 1.25).

\begin{thm}\label{thm:12}
	Let \ $N$ \ be a connected smooth $n$-dimensional manifold without boundary.

	If \ $N^\# (f,f) \not= 0$ \ for some map \ $f \, \colon \, S^m \to N$, then the following restrictions must all be satisfied:
	\begin{enumerate}[a.)]
		\item $n$ \ is even and  \ $m \geq n \geq 4$, \ or else \ $m=2$ \ and \ $N = S^2$ \ or \ $\R P(2)$; \ and
		\item $\pi_1(N) \cong \Z_2$ \ and \ $N$ \ is not orientable, or else \ $\pi_1(N) = 0$ \ (in other words, the first Stiefel-Whitney class of \ $N$ \ induces a \textnormal{monomorphism} from \ $\pi_1(N)$ \ to \ $\Z_2$); \ and
		\item $E \scr \circ \dis \partial_N \not\equiv 0 \; \colon \; \pi_m(N) \,\to\, \pi_m (S^n)$; \ in particular \ $N$ \ is closed and \ $\chi (N) \not= 0$; \; and
		\item if \ $\pi_1(N) \not= 0$ \ then there is no fixed point free selfmap of \ $N$ \ and the homomorphism \ $i_{\dis *} \, \colon \, \pi_m (N-\{ * \} ) \,\to\, \pi_m(N)$ \ (induced by the inclusion of \ $N$, \ punctured at some point) is not onto.
	\end{enumerate}
\end{thm}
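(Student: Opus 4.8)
The backbone of the whole argument is the equivalence (iv)$\Leftrightarrow$(v) of Theorem~\ref{thm:9}, which for every $[f]\in\pi_m(N)$ reads: $N^\#(f,f)\neq 0$ precisely when $E\circ\partial_N([f])\neq 0$ in $\pi_m(S^n)$. Assuming $N^\#(f,f)\neq 0$, restriction (c) is then immediate, since $E\circ\partial_N$ does not kill $[f]$. For the supplementary assertions of (c), recall from the discussion preceding \ref{equ:13} that $(f,f)$ is loose by small deformation as soon as $f^*(TN)$ has a nowhere-zero section; this happens whenever $N$ is noncompact or $\chi(N)=0$ (e.g. $N$ carrying a nowhere-zero vector field). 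In either case (i)$\Leftrightarrow$(ii) of Theorem~\ref{thm:9} gives $\partial_N([f])=0$, hence $E\circ\partial_N([f])=0$, contradicting the hypothesis. Thus $N$ is closed with $\chi(N)\neq 0$; since a closed odd-dimensional manifold has vanishing Euler characteristic, $n$ must be even.

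The dimension restriction (a) is next. When $m<n$ the obstruction to a nowhere-zero section of $f^*(TN)$ lives above $\dim S^m$, so $(f,f)$ is again loose by small deformation and $N^\#(f,f)=0$; hence $m\geq n$. Together with $n$ even this yields the first alternative of (a) as soon as $n\geq 4$. The only remaining even value is $n=2$, where $N$ is a closed surface with $\chi(N)\neq 0$. Granting restriction (b) (proved independently below), $\pi_1(N)$ embeds in $\Z_2$, and among closed surfaces this leaves only $S^2$ and $\RP(2)$. Since $n=2$, $\partial_N$ takes values in $\pi_{m-1}(S^{n-1})=\pi_{m-1}(S^1)$, which vanishes for $m\geq 3$; hence $N^\#(f,f)\neq 0$ forces $m=2$, giving the exceptional clause of (a).

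Restriction (b) — that $w_1(N)\colon\pi_1(N)\to\Z_2$ be a monomorphism — is the heart of the matter, and I would prove it in two steps. First, for $m\geq 2$ the map $f$ lifts to $\tilde f\colon S^m\to\tilde N$ into the universal cover, and since $T\tilde N\cong p^*(TN)$ we have $f^*(TN)\cong\tilde f^*(T\tilde N)$; if $\tilde N$ were noncompact it would carry a nowhere-zero vector field, forcing a nowhere-zero section of $f^*(TN)$ and hence $\partial_N([f])=0$, a contradiction. So $\tilde N$ is compact and $\pi_1(N)$ is finite. Second, I would show that $\pi_1(N)$ has no nontrivial orientation-preserving element: assuming $\gamma\neq 1$ with $w_1(\gamma)=0$, the corresponding deck transformation of the (now compact) universal cover is fixed-point-free and orientation-preserving, and passing to the covering-space description of $N^\#$ (\cite{ko7}, 3.4) the unique occupied self-coincidence Nielsen class of $(f,f)$ acquires, from the interchange symmetry of the two equal maps, a $\gamma$-translate counted with the sign $w_1(\gamma)=+1$ and the opposite local orientation, so that the two contributions cancel and the class becomes inessential, i.e. $N^\#(f,f)=0$. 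Since $w_1$ is a homomorphism, having no nontrivial orientation-preserving element forces $\pi_1(N)$ to inject into $\Z_2$, which is (b). The step I expect to be the genuine obstacle is making the second part precise: one must control the interplay of the interchange sign, the deck translation and the $w_1$-twisting of the coincidence index so that a nontrivial orientation-preserving $\gamma$ really does annihilate the index — this is exactly where the monomorphism hypothesis becomes unavoidable.

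Finally, suppose $\pi_1(N)\neq 0$, so by (b) $\pi_1(N)\cong\Z_2$ and $N$ is non-orientable. By the closing remark of Theorem~\ref{thm:2}, $\pi_1(N)\neq 0$ gives $\ker j_{N*}=0$; as $N^\#(f,f)\neq 0$ already yields $\partial_N([f])\neq 0$, we get $j_{N*}(\partial_N([f]))\neq 0$, so by (iii')$\Leftrightarrow$(iii'') of Theorem~\ref{thm:2} the map $f$ is coincidence producing. Both clauses of (d) follow at once. If $N$ admitted a fixed-point-free self-map $h$, then $f'\coloneqq h\circ f$ would satisfy $f'(x)\neq f(x)$ for all $x$, so $(f,f')$ would be loose, contradicting coincidence producibility; hence no such $h$ exists. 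Similarly, if $i_*\colon\pi_m(N-\{*\})\to\pi_m(N)$ were onto, we could homotope $f$ to a map avoiding some point $*$ and pair it with the constant map at $*$, producing a loose pair and again contradicting coincidence producibility; hence $i_*$ is not onto. The only laborious ingredient in the whole scheme is the index bookkeeping of the second step of (b); everything else reduces to the cited theorems and elementary looseness arguments.
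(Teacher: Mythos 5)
Your derivations of restrictions a.), c.) and d.), and of the finiteness of $\pi_1(N)$, are correct and use exactly the intended machinery: the equivalence (iv)$\Leftrightarrow$(v) of Theorem \ref{thm:9} to translate $N^\#(f,f)\neq 0$ into $E\circ\partial_N([f])\neq 0$, the vector-field/section criterion for looseness by small deformation (giving closedness, $\chi(N)\neq 0$, $n$ even, $m\geq n$, and -- via the lift to the universal cover -- finiteness of $\pi_1(N)$), and, for d.), the injectivity of $j_{N*}$ when $\pi_1(N)\neq 0$ established in the proof of Theorem \ref{thm:2}, which upgrades $\partial_N([f])\neq 0$ to coincidence producibility; the two clauses of d.) then follow by the elementary pairings you describe. (Note that the paper itself gives no proof of Theorem \ref{thm:12} but imports it from \cite{ko8}, 1.25, so there is no in-text argument to match; your reductions are nevertheless the natural ones.) The dependence of the $n=2$ clause of a.) on b.) creates no circularity, since your argument for b.) uses only $m\geq 2$ and c.).

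The genuine gap is the second step of b.), i.e. the injectivity of $w_1\colon\pi_1(N)\to\Z_2$ -- and your cancellation sketch fails as stated. By definition \ref{def:2}, $N^\#$ tests essentiality \emph{class by class} in $\pi_0(E(f,f))$; for the pair $(f,f)$ the only occupied class is the component $Q_0$ containing $s(S^m)$, whose contribution is $\pm E\circ\partial_N([f])$, while the ``$\gamma$-translate'' you invoke lies in a \emph{different} path component $Q_\gamma$, which is unoccupied and contributes zero by definition. Contributions attached to distinct Nielsen classes can never cancel against each other in $N^\#$, so there is no ``second contribution'' for the interchange symmetry and the $w_1$-sign to annihilate; a translation-type argument can at best move the occupied class (and only when $\gamma$ fixes $[f]$ under the $\pi_1$-action), and it produces no orientation sign. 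Note also that the paper already hands you a cheap partial substitute: by the corollary following Theorem \ref{thm:2}, if $\pi_1(N)$ has a nontrivial proper subgroup then every $(f,f)$ is loose by small deformation, whence $N^\#(f,f)=0$; this reduces b.) to $\pi_1(N)\cong\Z_p$, $p$ prime, without any index bookkeeping. But the residual cases -- $p$ odd, and $p=2$ with $N$ orientable, which are not excluded by anything proved so far (closed orientable even-dimensional manifolds with $\chi\neq 0$ and $\pi_1\cong\Z_2$ exist, e.g. the quotient of $S^2\times S^2$ by the free orientation-preserving involution $(x,y)\mapsto(-x,-y)$) -- are precisely where the substance of b.) lies, and these your proposal does not settle; they require the analysis of \cite{ko8} that the paper cites.
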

\medskip

On the other hand, if e.\,g. \ $n=4,\ 8,\ 12,\ 14,\ 16$ \ or \ 20, \ then there exist infinitely many homotopy classes \ $[f] \in \pi_{2n-1} (\R P(n))$ \ such that \ $N^\# (f,f) \not= 0$.

\begin{cor}
	Assume that at least one of the four restrictions a.), \dots, d.) in \ref{thm:12} is \textnormal{not} satisfied.

	If the Wecken condition for \ $(m,N)$ \ holds (cf. \ref{def:3}) \ then for all maps \ $f \colon S^m \to N$ \ the pair \ $(f,f)$ \ is loose by small deformation.
\end{cor}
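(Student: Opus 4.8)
The plan is to deduce this corollary purely formally from the two immediately preceding results, Theorem \ref{thm:12} and Theorem \ref{thm:9}, without any new geometric or homotopy-theoretic input. The argument splits into two logical steps, one supplied by each theorem, followed by a trivial combination.

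First I would apply the contrapositive of Theorem \ref{thm:12}. That theorem asserts that $N^\#(f,f) \neq 0$ for \emph{some} map $f \colon S^m \to N$ forces all four restrictions a.)--d.) to hold simultaneously. Since these restrictions are conditions on the pair $(m,N)$ alone, the hypothesis that at least one of them fails immediately yields $N^\#(f,f) = 0$ for \emph{every} map $f \colon S^m \to N$. In the language of Theorem \ref{thm:9} this says precisely that condition (iv) is satisfied for all such $f$.

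Second I would invoke the ``in particular'' clause of Theorem \ref{thm:9}: the five conditions (i)--(v) are equivalent for all maps $f \colon S^m \to N$ if and only if the Wecken condition for $(m,N)$ holds (cf. \ref{def:3}). Since the Wecken condition is assumed, conditions (i)--(v) are mutually equivalent for each $f$. In particular (iv) implies (ii), so the vanishing of $N^\#(f,f)$ established in the first step forces condition (ii), namely that $(f,f)$ is loose by small deformation. As $f$ was arbitrary, this holds for all maps $f \colon S^m \to N$, which is exactly the assertion.

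There is essentially no obstacle here beyond bookkeeping: the corollary is a direct logical consequence of the contrapositive of Theorem \ref{thm:12} combined with the equivalence of (i)--(v) guaranteed by the Wecken condition in Theorem \ref{thm:9}. The one point worth verifying is that the failure of a restriction is genuinely a statement about $(m,N)$ rather than about an individual map, so that the conclusion $N^\#(f,f)=0$ is uniform over all $f$; this is clear from the formulations of a.)--d.), each of which refers only to $n$, $m$, $\pi_1(N)$, $\chi(N)$, and the homomorphisms $\partial_N$, $E \circ \partial_N$ and $i_{\dis *}$ attached to $(m,N)$.
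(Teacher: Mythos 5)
Your proposal is correct and is exactly the argument the paper intends: the corollary is stated without separate proof as an immediate consequence of the contrapositive of Theorem \ref{thm:12} (whose restrictions a.)--d.) depend only on $(m,N)$, so their failure gives $N^\#(f,f)=0$ uniformly in $f$) combined with the ``in particular'' clause of Theorem \ref{thm:9}, under which the Wecken condition makes condition (iv) equivalent to condition (ii). Nothing is missing.
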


Further looseness results along these lines can be found e.\,g. in \cite{ko8}, 1.22 and 1.23.

\begin{thm}\label{cor:5}
	Let \ $k \in \Z \cup \{ \infty \}$ \ be the number elements in \ $\pi_1 (N)$. \
	Then for each pair of maps \ $f_1,f_2 \,\colon\, S^m \to N$, \ $m \geq 2$, \ the Nielsen numbers \ $N^\# (f_1,f_2)$, $\tilde N (f_1,f_2)$, $N (f_1,f_2)$ and $N^\Z (f_1,f_2)$ \ may assume only the values \ $0$ \ or \ $k$ \ or, if (at least) all the restrictions a.),\dots, d.) in \ref{thm:12} are satisfied, also \ $1$ \ as a third possible value.
\end{thm}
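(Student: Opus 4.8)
The plan is to reduce the whole statement to the strongest Nielsen number $N^\#$ and to a homogeneity property of the Reidemeister classes. Since $m\geq 2$ the domain $S^m$ is simply connected, so the Reidemeister action is trivial and the $k=\#\pi_1(N)$ path-components of the homotopy coincidence space $E(f_1,f_2)$ are in canonical bijection with $\pi_1(N)$; as each Nielsen number is at most the Reidemeister number $\#\pi_0(E(f_1,f_2))$, all four numbers lie in $\{0,1,\dots,k\}$. The four essentiality criteria are nested ($N^\Z$-essential $\Rightarrow N$-essential $\Rightarrow \tilde N$-essential $\Rightarrow N^\#$-essential), so each of $\tilde N(f_1,f_2)$, $N(f_1,f_2)$, $N^\Z(f_1,f_2)$ counts a subset of the $N^\#$-essential classes. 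It therefore suffices to prove the trichotomy for $N^\#$ by an argument whose symmetry simultaneously respects all four criteria; the conclusion for the coarser three then follows.

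First I would establish a homogeneity statement for the $k$ classes, which are indexed by $\pi_1(N)$. The looseness obstruction splits as a family indexed by $\pi_0(E(f_1,f_2))$, one contribution per class, and a class is essential precisely when its contribution is nonzero. I would realize elements $\delta\in\pi_1(N)$ as monodromies of self-homotopies of $f_1$ (and of $f_2$): a loop of maps based at $f_1$ whose basepoint-evaluation traverses $\delta$ lifts, on the universal cover $\widetilde N$, to a free homotopy carrying $\tilde f_1$ to $\delta\tilde f_1$, and under this deformation — which preserves every coincidence-theoretic index, Nielsen numbers being homotopy invariants — the class labelled $\gamma$ is carried to $\delta^{-1}\gamma$. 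Together with the conjugation symmetry induced by the genuine deck transformations of $\widetilde N$, this shows that essentiality (in each of the four flavours) is constant along the orbits of the subgroup generated by $\stab_{\pi_1(N)}([f_1])$, $\stab_{\pi_1(N)}([f_2])$ and the inner automorphisms. Hence the set of essential classes is a union of such orbits.

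Next I would split along whether some class is of self-coincidence type, equivalently (projecting a free homotopy $\tilde f_1\simeq\gamma_0\tilde f_2$ down to $N$, and lifting a homotopy back up) whether $f_1\simeq f_2$. If $f_1\not\simeq f_2$, no lift of $f_2$ is freely homotopic to $\tilde f_1$, the distinguished class is absent, and I would argue that any surviving essential class then has full orbit, forcing the value to be $0$ or $k$. If $f_1\simeq f_2$ we may assume $f_1\equiv f_2=:f$; then $C(f,f)=S^m$ is connected, lies in the single class labelled by the identity, and every Nielsen number is $\leq 1$, exactly as recorded in the self-coincidence discussion preceding \ref{prop:1}. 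In that case the value $1$ occurs iff $N^\#(f,f)\neq 0$, whereupon Theorem \ref{thm:12} forces restrictions a.)--d.) to hold; and if any of a.)--d.) fails then $N^\#(f,f)=0$, so by nestedness all four Nielsen numbers vanish. Assembling the cases gives the values $0$, $k$, or — only under a.)--d.) — $1$.

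The step I expect to be the main obstacle is precisely the transitivity claim in the non-self-coincidence case: that every essential class has a full $\pi_1(N)$-orbit, so that intermediate values $2\leq v\leq k-1$ (and the spurious $v=1$) cannot occur. The difficulty is that the monodromies actually realizable are only the stabilizers $\stab([f_i])$, which need not exhaust $\pi_1(N)$, so the orbit argument of the second paragraph by itself yields only constancy on these (possibly proper) orbits. The resolution must use that a proper stabilizer records a nontrivial $\pi_1(N)$-action on $\pi_m(N)$ and feed this back into the per-class looseness obstructions — as in the spherical space form case, where a fixed-point-free lift has degree $(-1)^{n+1}$, forcing either a trivial action (hence a single full orbit and the values $0$ or $k$) or $\pi_1(N)\cong\Z_2$ (where $k=2$ and the lone self-coincidence class accounts for the value $1$). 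Making this dichotomy precise and uniform over all target manifolds $N$, thereby isolating the single self-coincidence class as the only possible lone essential class, is where the real work lies.
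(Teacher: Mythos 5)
Your reduction leaves the decisive case unproven, and you say so yourself: when $f_1 \not\sim f_2$ you need every essential Nielsen class to have a full $\pi_1(N)$-orbit, but the monodromies you can actually realize are only the stabilizers of $[f_1]$ and $[f_2]$ under the $\pi_1(N)$-action on $\pi_m(N)$, and these may be proper subgroups. Worse, the statement you are steering toward -- that the self-coincidence class is the only possible lone essential class -- is strictly stronger than the theorem and not needed for it: the theorem permits the value $1$ for non-homotopic pairs as well, provided a.)--d.) hold, so your proposed dichotomy sets a harder (and possibly false) goal than the claim requires. The sketch via degrees of fixed-point-free lifts is tailored to spherical space forms and, as you concede, does not obviously extend uniformly to all targets $N$; at this point the proof has a genuine hole.

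The paper avoids transitivity for general pairs altogether by exploiting the group structure of $\pi_m(N)$ instead of the $\pi_1(N)$-orbit structure. Lemma \ref{lem:1} (additivity of $\omega^\#$ when $M = S^m$) gives the splitting $\omega^\# (f_1,f_2) = \deg^\# (f_1 - f_2^\tau) + \omega^\# (f_2^\tau, f_2)$ into a root component and a selfcoincidence component, and the case split is on the vanishing of $N^\#(f_2,f_2)$, not on whether $f_1 \sim f_2$. If $N^\#(f_2,f_2) = 0$ -- which is automatic as soon as one of a.)--d.) fails, by theorem \ref{thm:12} -- only the root component survives, and proposition \ref{prop:3} finishes: in the root case $(f,*)$ the transitivity you were missing comes for free, since \emph{every} loop $\theta$ at $*$ is a self-homotopy of the constant map and hence induces a fiber homotopy equivalence of $E(f,*)$ carrying the distinguished component $Q_0$ to any prescribed component $Q$ while preserving each of the four per-class obstructions; so each Nielsen number is $0$ or the full Reidemeister number $\# \left( \pi_1(N)/f_{\dis *} \pi_1 (S^m) \right) = k$. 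If instead $N^\#(f_2,f_2) \neq 0$, theorem \ref{thm:12}\,b.) forces $\pi_1(N)$ to inject into $\Z_2$, so $k \leq 2$, the Reidemeister number bounds all four Nielsen numbers by $2$, and a.)--d.) hold -- precisely the case in which the value $1$ is admitted. Your opening observations (Reidemeister bound $k$ for simply connected domain, nestedness of the essentiality criteria, the selfcoincidence reduction via theorem \ref{thm:12}) all match the paper, but without something playing the role of lemma \ref{lem:1} the case $f_1 \not\sim f_2$ remains unproved.
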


Clearly such results can simplify calculations enormously.
E.\,g. if the target manifold \ $N$ \ fails to satisfy just one of the restrictions a.), \dots, d.) we have to decide only whether a given Nielsen number vanishes or not.
(Since this is less likely for \ $N^\#$ \ than e.\,g. for \ $N^\Z$, \ it is in general harder -- but also more rewarding -- to deal with \ $N^\#$).

The proof of theorem \ref{cor:5} (given in section \ref{sec:7} below) combines results involving not only selfcoincidences but also the so-called root case.
Here pairs of the form \ $(f, *)$ \ are studied where \ $*$ \ stands for a constant map.
If \ $m,n \geq 2$ \ the geometry of coincidence data gives rise to a degree homomorphism
\stepcounter{thm}
\begin{equation}
	\deg^\#  \; \coloneqq \; \omega^\# (-,*) \; \colon \; \pi_m (N) \; \to \; \pi_m (S^n \wedge ( \Omega N )^+ )
\end{equation}
(cf. \ref{lem:1} below) which often yields a homogenous approach to dealing with arbitrary pairs \ $(f_1,f_2)$ \ of maps.
Moreover \ $\deg^\#$ \ turns out to be equivalent to an enriched Hopf-Ganea invariant (cf. \cite{ko6}, 7.2 and (64)).
So it is not surprising that some coincidence results such as finiteness criteria for \ $MC (f_1,f_2)$ \ can be expressed in terms of Hopf-Ganea invariants (cf. e.\,g. \cite{ko6}, 7.4 and 7.6).

In the second part of this paper (to be published later) we will discuss a few recent developments and future possibilities concerning minimum numbers and Wecken theorems in the \textit{setting of fiberwise maps}.
\begin{con}
	Throughout this paper \ ${f_1, f_2, f, .. \,\colon \, M \to N}$ \ denote (continuous) maps between connected smooth (non-empty, Hausdorff) manifolds without boundary, having countable bases and (possibly different) strictly positive dimensions \ $m$ \ and \ $n$. \
	We assume \ $M$ \ to be compact (this garantees e.\,g. that \ $MCC (f_1, f_2)$ \ is always finite).
	\ $\sim$ \ means homotopic (or another equivalence relation when this is understood from the context).
	\ $\#$ \ means cardinality or number (in $\{0, 1, 2, \dots, \infty \}$).
\end{con}

\section{Looseness obstructions} \label{sec:2}

	When trying to decide whether a given pair \ ${(f_1,f_2)}$ \ is loose we should take a cereful look at the geometry of generic coincidence data.
	\begin{figure}[H]
		\includegraphics[width=.5\textwidth]{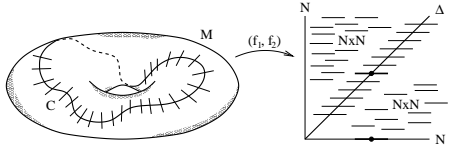}
		\caption{A generic coincidence manifold and its normal bundle}
		\label{fig:1}
	\end{figure}
	After performing an approximation we may assume that the map \ ${(f_1,f_2)\colon M\to N\times N}$ \ is smooth and transverse to the diagonal \newline
	${\Delta=\{(y,y)\in N\times N\mid y\in N\}}$.
	Then the coincidence locus
	\addtocounter{thm}{2}
	\begin{equation}\label{equ:3}
		C=C(f_1,f_2)=(f_1,f_2)^{-1}(\Delta)=\{x\in M\mid f_1(x)=f_2(x)\}
	\end{equation}
	is a closed smooth \ ${(m-n)}$-dimensional submanifold of \ $M$.
	It comes with two important data.
	First there is a commuting diagram of maps
	\stepcounter{thm}
	\begin{equation} \label{equ:12}
		\xymatrix@C=-2.6cm{
			& **[r] E(f_1,f_2) \ar[d]^{\pr} \coloneqq \{ (x,\theta) \in M \times P(N) \mid \theta(0) = f_1(x);\theta(1) = f_2(x)\} \\
			C \ar[ur]^{\tilde g} \ar[r]_{g=\incl} & M
		}
	\end{equation}
	where \ $P(N)$, \ (and pr, resp.), denote the space of all continuous paths \ $\theta \colon [0,1] \to N$, endowed with the compact-open topology, (and the obvious projection, resp.);
	the lifting \ $\tilde g$ \ adds the constant path at \ $f_1(x) = f_2(x)$ \ to \ $g(x) = x \in C$.
	The second datum is the (composite) vector bundle isomorphism
	\stepcounter{thm}
	\begin{equation}\label{equ:10}
		\overline{g}^\# \;\; \colon \;\; \nu (C,M)\; \cong \; ((f_1,f_2) \vert C)^{\dis *}(\nu (\Delta, N \times N)) \; \cong \; f_1^{\dis *} (TN) \vert C
	\end{equation}
	which describes the normal bundle of \ $C$ \ in \ $M$ \ (see figure \ref{fig:1} for an illustration).

	The resulting bordism class
	\stepcounter{thm}
	\begin{equation}\label{equ:2}
		\omega^\# (f_1,f_2) = [C(f_1,f_2), \tilde g, \overline g^\#] \in \Omega^\# (f_1,f_2)
	\end{equation}
	in an appropriate bordism \textit{set} is our strongest -- but also most unmanageable-coincidence invariant (for details and e.\,g. relations to Hopf-Ganea homomorphisms see \cite{ko6}). If the pair \ ${(f_1,f_2)}$ \ is loose, then \ ${\omega^\#(f_1,f_2)}$ \ must necessarily be trivial.
	
	Successive simplifications now yield further looseness obstructions, which we list in the two top lines of diagram \ref{dia:1}.
	Here the maps \textit{stab} and \ ${pr_{\dis{*}} \circ stab}$ \ forget about \ $g\colon C \subset M$ \ being an embedding and stabilize \ $\overline g^\#$, \  i.\,e.

	\begin{equation*}
		\xymatrix @R=0pt @C=22pt {
			\omega^\#(f_1,f_2)                          \ar[r]                 &
			\tilde \omega (f_1,f_2)                     \ar[r]                 &
			\omega(f_1,f_2)                             \ar[r]^-{\mu}          &
			g_{\dis *}([C(f_1,f_2)])                                           \\
			\rotatebox{-90}{$\in$}                                             &
			\rotatebox{-90}{$\in$}                                             &
			\rotatebox{-90}{$\in$}                                             &
			\rotatebox{-90}{$\in$}                                             \\
			\Omega^\# (f_1,f_2)                         \ar[r]^-{\stab}        &
			\Omega_{m-n} (E (f_1,f_2) ; \tilde \varphi) \ar[r]^-{\pr_{\dis *}} &
			\Omega_{m-n} (M;\varphi)                    \ar[r]^-{\mu}      &
			H_{m-n}(M;\tilde \Z_{\varphi})                                     \\
			\infty > N^\#(f_1,f_2)                      \ar@{}[r]|\geq         &
			\tilde N (f_1,f_2)                          \ar@{}[r]|\geq         &
			N (f_1,f_2)                                 \ar@{}[r]|\geq         &
			N^\Z (f_1,f_2) \geq 0
		}
	\end{equation*}

	\begin{dia}\label{dia:1}
		Looseness obstructions (and the corresponding Nielsen numbers, cf. section \ref{sec:3} below)
	\end{dia}
	\bigskip
	
	\noindent
  	  replace it by the induced \textit{stable} tangent bundle isomorphism
  	\begin{equation*}
		\tag{2.4'}
		\overline g \;\;\colon\; TC \oplus \tilde g^{\dis *} \!\lt \pr^{\dis *}\!\!\lt f_1^{\dis *}\lt TN\rt \rt\rt \oplus \uwave{\R^k}  \;\; \cong \; \; \tilde g^{\dis *}\!\lt \pr^{\dis *} (TM) \rt \oplus \uwave{\R^k}
	\end{equation*}
	(or, equivalently,
	\begin{equation*}
		\overline g \; \;\colon\; TC \oplus g^{\dis *} (f_1^{\dis *} (TN)) \oplus \uwave{\R^k} \;\; \cong \;\; g^{\dis *} (TM) \oplus \uwave{\R^k};
	\end{equation*}
	compare \ref{equ:10}), \ $k >> 0$.
	Thus the invariants \smallskip
	\stepcounter{thm}	
	\begin{align}
		\tilde \omega (f_1,f_2) \; &\coloneqq \; [C, \tilde g, \overline g] \; = \; \stab (\omega^\# (f_1,f_2)) & \qquad \text{and} \\
	\stepcounter{thm}\label{equ:14}
		\omega (f_1,f_2) \; &\coloneqq \; [C, g, \overline g] \; = \; \pr_{\dis *}(\tilde\omega (f_1,f_2))
	\end{align}

	\noindent
	lie in the indicated normal bordism groups with coefficients in the virtual vector bundles
	\stepcounter{thm}
	\begin{equation}
		\varphi \coloneqq f_1^{\dis *} (TN) - TM \qquad \text{ and }\qquad \tilde \varphi \coloneqq \pr^{\dis *} (\varphi).
	\end{equation}
	(You may think of normal bordism theory as ``twisted framed bordism''; some background can be found in \cite{da}, \cite{ko1} and also in \cite{sa} where the opposite sign convention is used for coefficient bundles).
	Details about the looseness obstructions \ $\tilde \omega$ \ and \ $\omega$ \ are given in \cite{ko4}.
	Homotopy theoretical versions of the $\tilde \omega$-invariant are discussed in great depth and generality in \cite{cr}; compare also the work of Jaren, Klein and Williams (cf. e.\,g. \cite{kw}).

	Finally note that the Hurewicz homomorphism \ $\mu$ \ in diagram \ref{dia:1} maps \ $\omega (f_1,f_2)$ \ to the image (under the inclusion \ $g$) of the fundamental class of \ $C$ \ with integer coefficients twisted like (the orientation line bundle of) \ $\varphi$.

\section{Nielsen and Reidemeister numbers as bounds for minimum numbers} \label{sec:3}

	Each of the coincidence invariants discussed so far seems to have a flaw which makes it either too hard to compute or else too weak: in general, \
	${\Omega^\# (f_1,f_2)}$ \ is only a set without an algebraic structure while \
	$\tilde\omega(f_1,f_2)$ \ lies in a group which, however, may vary with \
	$(f_1,f_2)$; on the other hand, \
	$\mu (\omega (f_1, f_2))$ \ contains no more information than a (co-)homological first order obstruction.

	But we can extract simple numerical invariants which yield lower and upper bounds for our minimum numbers.

	The key is the `pathspace' \
	$E(f_1, f_2)$ \ (sometimes called `homotopy coincidence space' of \
	$(f_1,f_2)$, compare \cite{cj}, II. 6.11), together with the inclusion map \
	$\tilde g$ \ (cf. \ref{thm:6}).

	\begin{defn}
		Two coincidence points \
		$x,x' \in C(f_1, f_2)$ \ are called \textit{Nielsen equivalent} if \
		$\tilde g(x)$ \ and \ $\tilde g(x')$ \ lie in the same pathcomponent of \
		$E(f_1,f_2)$ \ (or, equivalently, if there is a path  \ $c$ \ in \ $M$ \ from \ $x$ \ to \ $x'$ \ such that \
		$f_1 \scr \circ \dis c$ \ and \
		$f_2 \scr \circ \dis c$ \ are homotopic in \ $N$ \ by a homotopy which leaves the endpoints \
		$f_1(x) = f_2(x)$ \ and \ $f_1(x') = f_2(x')$ \ fixed).
	\end{defn}

	This equivalence relation yields a decomposition of the (generic) coincidence space \
	$C = C (f_1, f_2)$ \ into the closed manifolds \
	$C_Q = \tilde g^{-1} (Q), \quad Q\subset E(f_1,f_2)$ \ a pathcomponent.

	\begin{defn}\label{def:2}
		\textit{The Nielsen number} \
		$N^\# (f_1, f_2)$ \ (and \
		$\tilde N(f_1, f_2)$, \ $N(f_1,f_2)$, \ $N^\Z(f_1,f_2)$, resp.) is the number of path components \
		$Q \in \pi_0 (E (f_1, f_2))$ \ such that the coincidence data, when restricted to \
		$C_Q = \tilde g^{-1} (Q)$, contribute nontrivially to \
		$\omega^\# (f_1, f_2)$ \ (and \ $\tilde \omega (f_1, f_2), \; \omega(f_1, f_2), \; \mu (\omega (f_1, f_2)$), resp.).
	\end{defn}

	\begin{war}[change of notation]
		Our Nielsen number \
		$\tilde N(f_1, f_2)$ \ (which may well differ from our Nielsen number \
		$N(f_1, f_2)$ \ (see e.\,g. theorem \ref{thm:7} in the introduction) was previously denoted by \
		$N(f_1, f_2)$ \	(in \cite{ko3} -- \cite{ko11} and \cite{gk}). \hfill$\Box$
	\end{war}\medskip
	
	For an interpretation of Nielsen numbers in terms of covering spaces of \ $N$ \ see \cite{ko7}, \S 3.
	
	Since we assume \ $M$ \ to be compact, \
	$N^\# (f_1, f_2)$, \ $\tilde N(f_1, f_2)$, \ $N(f_1, f_2)$ \ and \
	$N^\Z (f_1, f_2)$ \ must be (\textit{finite}) nonnegative integers.
	Clearly, the weaker an invariant, the less it is able to detect ``essential'' Nielsen equivalence classes.
	This implies the order relations, spelled out in diagram \ref{dia:1}, among our four Nielsen numbers.

	\begin{defn}\label{def:1}
		The \textit{geometric Reidemeister set} and the \textit{Reidemeister number}, resp., of the pair \
		$(f_1,f_2)$ \ is the set \ $\pi_0 (E (f_1, f_2))$ \ of all pathcomponents of \
		$E (f_1, f_2)$ \ (cf. \ref{equ:12}) and its cardinality (
		$\in \{1,2,3,\dots,\infty\}$), resp.
	\end{defn}

	Given any coincidence point \
	$x_0 \in C(f_1, f_2)$ \ (with \
	$y_0 \coloneqq f_1 (x_0) = f_2 (x_0))$, there is a canonical bijection between \
	$\pi_0 (E (f_1, f_2))$ \ and the \textit{algebraic Reidemeister set}
	\addtocounter{thm}{1}
	\begin{equation}\label{equ:1}
		\pi_1(N;y_0) \text{ / Reidemeister equivalence}
	\end{equation}
	\noindent
	where we call \ $[\theta], [\theta'] \in \pi_1 (N, y_0)$ \ \textit{Reidemeister equivalent} if \
	$[\theta'] = f_{1\dis *} (\mu)^{-1} \cdot [\theta] \cdot f_{2 \dis *} (\mu)$ \ for some \
	${\mu \in \pi_1 (M,x_0)}$ \ (compare \cite{ko4}, prop. 2.1).

	\begin{thm}\label{thm:1}
		Let \ $f_1, f_2 \colon M^m \to N^n$ \ be (continuous) maps between connected smooth manifolds (without boundary) of the indicated strictly positive dimensions, \ $M$ \ being compact.
		Then we have
		\begin{enumerate}[(i)]
			\item (Homotopy invariance). The Reidemeister number \ $\#\pi_0 (E (f_1, f_2))$
						\ as well as the Nielsen numbers \ $N^\# (f_1, f_2)$, \
						$\tilde N(f_1, f_2)$, \ $N(f_1, f_2)$ \ and \ $N^\Z(f_1,f_2)$ \ depend
						only on the homotopy classes of \ $f_1$ \ and \ $f_2$.
			\item (Symmetry).
				\begin{align*}
							\# \pi_0 (E (f_1, f_2)) & = \# \pi_0 (E (f_2, f_1)); \\
							N^\# (f_1, f_2)         & = N^\# (f_2, f_1);         \\
							\tilde N (f_1, f_2)     & = \tilde N (f_2, f_1).
						\end{align*}
			\item (Lower bounds for minimum numbers): \ $MCC(f_1,f_2)$ \ is finite and we have
				\begin{equation*}
						MC (f_1, f_2) \geq MCC (f_1, f_2) \geq N^\# (f_1, f_2) \geq \tilde N
						(f_1,f_2) \geq N (f_1, f_2) \geq	N^\Z (f_1, f_2).
				\end{equation*}
			\item (Upper bounds): If \ $n \not= 2$, then
				\begin{equation*}
						MCC (f_1, f_2) \leq \# \pi_0 (E (f_1, f_2));
				\end{equation*}
						if \ $(m,n) \not = (2,2)$ \ then
				\begin{equation*}
					MC (f_1,f_2) \leq \# \pi_0 (E (f_1,f_2)) \text{ \ or \ } MC (f_1,f_2) = \infty.
				\end{equation*}
		\end{enumerate}
	\end{thm}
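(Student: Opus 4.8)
The plan is to treat (i)--(ii) as formal consequences of the functoriality of the pathspace \ref{equ:12}, and (iii)--(iv) as the geometric content. For (i) I would first note that homotopies $f_i\sim f_i'$ induce a homotopy equivalence $E(f_1,f_2)\simeq E(f_1',f_2')$: one concatenates the path $\theta$ of a point $(x,\theta)$ with the tracks $t\mapsto H_i(x,t)$ of the two homotopies to obtain a path from $f_1'(x)$ to $f_2'(x)$, with the reversed concatenation as homotopy inverse. This gives a bijection on $\pi_0$, hence homotopy invariance of the Reidemeister number; alternatively one reads it off the identification \ref{equ:1}, whose equivalence relation depends only on $f_{1*},f_{2*}$ on $\pi_1$. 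For the Nielsen numbers one observes that a homotopy of $(f_1,f_2)$ produces a bordism of coincidence data respecting the $\tilde g$-structure over the pathspace, so the partial invariant attached to each component $Q$ is preserved; the essential classes, and hence their count, are unchanged.

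For (ii) the reversal $\theta\mapsto\bar\theta$, $\bar\theta(t)=\theta(1-t)$, is a homeomorphism $E(f_1,f_2)\cong E(f_2,f_1)$, giving symmetry of the Reidemeister number. For the bordism invariants I would use that $(f_2,f_1)$ is the composite of $(f_1,f_2)\colon M\to N\times N$ with the swap involution of $N\times N$; this fixes the diagonal and acts as $-1$ on its normal bundle, hence negates the framing \ref{equ:10}. Negation is a bijection of the ambient bordism set (an automorphism of the bordism group) fixing the trivial class, so it carries essential classes to essential classes and preserves $N^\#$ and $\tilde N$. This argument is available for $N^\#,\tilde N$ but is \emph{not} asserted for $N,N^\Z$ because over the pathspace the tautological path $\theta$ supplies a canonical isomorphism $\pr^*(f_1^*TN)\cong\pr^*(f_2^*TN)$ matching the coefficient bundle $\tilde\varphi$ of the two pairs, whereas on $M$ the bundles $f_1^*TN$ and $f_2^*TN$ need not be isomorphic, so the groups carrying $\omega$ and $\mu(\omega)$ cannot be identified.

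For (iii), finiteness of $MCC$ is immediate since a transverse representative has coincidence locus \ref{equ:3} a closed $(m-n)$-submanifold of the compact $M$, with finitely many components; $MC\ge MCC$ because $\#\pi_0(C)\le\#C$ for every representative; and $N^\#\ge\tilde N\ge N\ge N^\Z$ is the order relation of \ref{dia:1}, as each weaker invariant is the image of the stronger under $\stab$, $\pr_*$, $\mu$, so essentiality propagates upward. The crux is $MCC\ge N^\#$. For any representative $(f_1',f_2')$ each path component of $C(f_1',f_2')$ lies in a single Nielsen class, because a connecting arc $c\subset C$ satisfies $f_1'\circ c=f_2'\circ c$ and hence $\tilde g$ maps it into one component of $E$ (cf. \ref{def:2}); thus $\tilde g$ descends to $\pi_0(C)\to\pi_0(E)$. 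Since the partial invariant $\omega^\#|_{C_Q}$ of an essential class $Q$ is a homotopy invariant that is nontrivial, and an empty $C_Q$ would contribute the trivial class, $C_Q\ne\emptyset$ for every representative; distinct essential classes therefore meet distinct components of $C$, giving $\#\pi_0(C)\ge N^\#$ and, on minimizing, $MCC\ge N^\#$.

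Finally (iv) is the real work. The strategy is a Wecken-type merging move: if two components of $C$ share a Nielsen class, the homotopy rel endpoints of $f_1'\circ c$ to $f_2'\circ c$ along a connecting arc $c\subset M$ supplies the data to deform $(f_1',f_2')$ and join the two components by a tube of coincidences, lowering $\#\pi_0(C)$ without producing new classes; iterating makes $\tilde g\colon\pi_0(C)\to\pi_0(E)$ injective, so $MCC\le\#\pi_0(E)$. The move is a general-position construction in the target, and the hypothesis $n\ne2$ enters exactly here, just as surfaces obstruct the Wecken process in classical fixed-point theory. For the point count I would split on codimension: when $m>n$ a nonempty transverse $C$ is positive-dimensional, hence infinite as a point set, forcing $MC\in\{0,\infty\}$; when $m=n$ the finite set $C$ is thinned by the same move, now ordinary cancellation of coincidence points, reducing each class to at most one point and giving $MC\le\#\pi_0(E)$ unless $(m,n)=(2,2)$. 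The main obstacle I anticipate is the careful dimension-counting and execution of this general-position deformation, together with verifying that the obstructed cases are precisely $n=2$ (respectively $(m,n)=(2,2)$); the remaining assertions are formal.
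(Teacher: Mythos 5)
Your parts (i)--(iii), your symmetry discussion (including the correct explanation of why $N$ and $N^{\Z}$ are absent from (ii): the coefficient bundle $\varphi=f_1^{\dis *}(TN)-TM$ lives on $M$ and breaks the symmetry there, while over $E(f_1,f_2)$ the tautological path identifies the two pullbacks), and your tube-merging argument for $MCC(f_1,f_2)\leq \#\pi_0(E(f_1,f_2))$ when $n\neq 2$ all follow essentially the route of the paper, which delegates the proof to \cite{ko4}, 1.9, and \cite{ko6}, theorem 1.2. The genuine gap is in your treatment of $MC$ in (iv). You claim that for $m>n$ a nonempty transverse coincidence set is positive-dimensional and hence infinite, ``forcing $MC\in\{0,\infty\}$.'' This is false: $MC$ is the minimum of $\#C(f_1',f_2')$ over \emph{all} homotopic representatives, not merely transverse ones, and non-generic representatives can have finitely many --- even isolated --- coincidence points in strictly positive codimension. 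The paper itself records this: theorem \ref{thm:7}, part (b), gives $MC(f_1,f_2)=1$ whenever $m,n\geq 2$ and $[f]\in E(\pi_{m-1}(S^{n-1}))\setminus\{0\}$ (e.g. $(m,n)=(4,3)$ with $[f]$ the suspended Hopf class), and proposition \ref{prop:2} yields $MC(f_1,f_2)=\#G$, finite and nonzero, with $m>n$. So your dichotomy contradicts the very theorem you are bounding, and the second assertion of (iv) is left unproven precisely where it has content, namely $m>n$ with $0<MC<\infty$.

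The repair is the argument of \cite{ko6}, theorem 1.2, which is what the paper invokes: assume $MC(f_1,f_2)<\infty$ and choose a representative pair with finitely many coincidence points; then merge two coincidence points lying in the same Reidemeister class by deforming $f_1$ along an embedded arc $c$ in $M$ joining them, guided by the homotopy rel endpoints between $f_1\scr\circ\dis c$ and $f_2\scr\circ\dis c$, checking via general position that no new coincidences arise. This isolated-point merging is available exactly when $(m,n)\neq(2,2)$ --- the classical surface case being the known obstruction (Jiang) --- and it reduces the count to at most one point per Reidemeister class, giving $MC\leq\#\pi_0(E(f_1,f_2))$; the cases $m<n$ (generic $C$ empty, so $MC=0$) and $m=n\neq 2$ (your classical Wecken cancellation) are then the easy ends of the same argument, not a substitute for it.
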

	\smallskip

	This is proved along the lines of \cite{ko4}, 1.9, and \cite{ko6}, theorem 1.2.
	\bigskip

	In many concrete settings theorem \ref{thm:1} \
	can be used for explicit calculations. In particular, we will be able to prove theorem \ref{thm:7} and the following generalization of theorem \ref{thm:8}.


	\begin{example}\label{exa:1}{\bf maps into tori.}\vspace{-1ex}
		\begin{thm}\label{thm:5}
			Let \ $f_1,f_2\colon M\to T^n\coloneqq(S^1)^n$ \ be any pair of maps from a
			closed, connected, smooth $m$-manifold $M$ into the $n$-dimensional torus
			$\;T^n,\; m,n\geq1$.

			Then its Nielsen, minimum and Reidemeister numbers
			satisfy the inequalities
			\begin{equation*}
				\#\pi_0(E) \;\;\; {\geq} \;\; \mid\det\mid \;\;\; \underset{\overbrace{\text{if }n\not=2}}{\geq} \;\;
        MCC \;\; \geq \;\; N^\# \;\; \geq \;\; \tilde N\;\;\geq\;\;N\;\;\geq\;\; N^\Z\;.
			\end{equation*}
			Here \ $\det$ \ denotes the determinant of an (arbitrary) \ $n\times n$-matrix \
			$(u_1,\dots, u_n)$ \ with integer entries whose column vectors \
			$u_i,\ i=1, \dots, n$, generate the image of
			\begin{equation*}
				f_{1\dis *}-f_{2\dis *} \colon H_1 (M;\Z) \to H_1 (T^n; \Z) = \Z^n.
			\end{equation*}
			Moreover, \ $N^\Z (f_1, f_2) = \vert \det \vert$ \ whenever \
			$(f_1 - f_2)^{\dis *} \colon H^n (T^n; \Z) \to H^n (M; \Z)$ \ is not
			zero; \ if \ $\det \cdot z \not= 0$ \ for all nontrivial \ $z \in H^n(M; \Z)$,
			then \ $N^\Z (f_1, f_2) = 0$ \ whenever \ $(f_1 - f_2)^{\dis *} \equiv 0$ \ on \
			$H^n (T^n; \Z)$.

			\noindent (Here \ $f_{1 \dis *}, f_{2 \dis *}$ \ (and \ $(f_1 - f_2)^{\dis *}$)
			denote the obvious induced homomorphisms in (co)homology).
			
			In particular, if \ $n \not= 2$ \  and \ $(f_1 - f_2)^{\dis *} \not \equiv 0$ \
			on \ $H^n (T^n ; \Z)$, then \ $MCC (f_1, f_2)$ \ is equal to \
			$\vert\!\det (u_1, \dots, u_n) \vert$ \ and to all four Nielsen numbers.

			However, for all \ $m,n \geq 2$ \ and \ $r \in \Z$ \ there exists an $m$-manifold \ $M$ \ and a pair \ $f_1, f_2 \colon M \to T^n$ \ of maps such that
			\begin{equation*}
				MCC \; = \; N^\# \; = \; \tilde N \; = \; N \; = \; N^\Z \; = \; 0 \quad \text{but} \quad
				\det(u_1, \dots ,u_n) \; = \; r.
			\end{equation*}
		\end{thm}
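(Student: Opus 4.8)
The plan is to exploit the group structure of $T^n$ to turn the coincidence problem for $(f_1,f_2)$ into a \emph{root} problem for the single difference map $g\coloneqq f_1-f_2\colon M\to T^n$, so that $C(f_1,f_2)=g^{-1}(e)$ for the neutral element $e\in T^n$; in the codimension-zero case $m=n$ this is just the fixed point problem for $f_1-f_2+\id$ on the torus. Since $T^n=K(\Z^n,1)$, the free homotopy class of $g$ is determined by $g_{\dis *}=f_{1\dis *}-f_{2\dis *}$ on $H_1$, and because $\pi_1(T^n)=\Z^n$ is abelian the algebraic Reidemeister set \ref{equ:1} reduces to the cokernel $\Z^n/\operatorname{im}(g_{\dis *})$. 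Its cardinality is $|\det(u_1,\dots,u_n)|$ when $\operatorname{im}(g_{\dis *})$ has finite index and $\infty$ otherwise, giving $\#\pi_0(E)\geq|\det|$ with equality precisely when $\det\neq0$. The chain $MCC\geq N^\#\geq\tilde N\geq N\geq N^\Z$ is \ref{thm:1}(iii).

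For the upper bound $|\det|\geq MCC$ when $n\neq2$ I would distinguish two cases. If $\det\neq0$ then $\#\pi_0(E)=|\det|$ and \ref{thm:1}(iv) yields $MCC\leq\#\pi_0(E)=|\det|$. If $\det=0$ then $\operatorname{im}(g_{\dis *})$ has rank $<n$; saturating it to a proper direct summand of $\Z^n$ and using $T^n=K(\Z^n,1)$ shows that $g$ factors up to homotopy through a proper subtorus, hence misses a point. Thus $g$ is rootless, $(f_1,f_2)$ is loose, and $MCC=0=|\det|$.

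The heart of the matter is the evaluation of $N^\Z$. As $TT^n$ is trivial, the coefficient bundle in \ref{equ:14} is $\varphi=\underline{\R^n}-TM$, whose orientation character is that of $M$, so twisted Poincar\'e duality gives $H_{m-n}(M;\tilde\Z_\varphi)\cong H^n(M;\Z)$; under this isomorphism the Hurewicz image $\mu(\omega(f_1,f_2))=g_{\dis *}[C]$ becomes the Poincar\'e dual of $C=g^{-1}(e)$, namely $(f_1-f_2)^{\dis *}$ applied to the top generator of $H^n(T^n;\Z)$. Hence the \emph{total} homological contribution vanishes exactly when $(f_1-f_2)^{\dis *}\equiv0$ on $H^n$. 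To convert this into the \emph{count} $N^\Z$ I would invoke the homogeneity (Jiang property) of the torus: target translations are homotopic to the identity and permute the Reidemeister classes, so every essential Nielsen class $C_Q$ carries the same contribution $c\in H^n(M;\Z)$ and $\mu(\omega)=|\det|\cdot c$. If $(f_1-f_2)^{\dis *}\not\equiv0$ on $H^n$ then $g$ cannot factor through a proper subtorus, so $\det\neq0$ and there are exactly $|\det|$ Reidemeister classes; moreover $|\det|\cdot c\neq0$ forces $c\neq0$, every class is essential, and $N^\Z=|\det|$. If instead $(f_1-f_2)^{\dis *}\equiv0$ on $H^n$ while $\det\cdot z\neq0$ for all nonzero $z\in H^n(M;\Z)$, then $|\det|\cdot c=0$ and this injectivity forces $c=0$, whence $N^\Z=0$. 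Combining $N^\Z=|\det|$ with the squeeze $|\det|\geq MCC\geq N^\#\geq\tilde N\geq N\geq N^\Z$ (valid for $n\neq2$) collapses the whole chain to $|\det|$, which is the asserted ``in particular'' equality.

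Finally, to realize $\det=r$ while all coincidence invariants vanish, I would decouple $g_{\dis *}$ (which fixes $\det$) from looseness (which is governed by how $g$ sits geometrically). For $m\geq3$ take $M=\#^{\,n}(S^1\times S^{m-1})$; collapsing the $(m-1)$-spheres gives a degree-one map $M\to\bigvee^n S^1$ onto the $1$-skeleton of $T^n$, through which I factor a map $g$ realizing any prescribed matrix of determinant $r$. Its image lies in $(T^n)^{(1)}\subset T^n\setminus\{e\}$, so $g$ is loose and yet $\det=r$, forcing $MCC=\dots=N^\Z=0$. The remaining low cases are handled by dimension when $m<n$ (any map into $T^n$ then misses points) and, for $m=n=2$, by choosing the two pulled-back one-classes inside an isotropic subspace of the intersection form of a surface of sufficiently high genus, so that their cup product vanishes while $\det=r$. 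The step I expect to be the main obstacle is the $N^\Z$ computation: pinning down $\mu(\omega)$ as $(f_1-f_2)^{\dis *}$ of the top class under twisted Poincar\'e duality, and above all justifying that \emph{every} Nielsen class contributes the same homology class, so that the number of essential classes -- not merely the total class -- is exactly $|\det|$.
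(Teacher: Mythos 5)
Your main line---translating $(f_1,f_2)$ into the root problem for $g\coloneqq f_1-f_2$, computing the Reidemeister set as $\Z^n/\operatorname{im}(g_{\dis *})$ with cardinality $|\det|$ or $\infty$, factoring $g$ through a proper subtorus when $\det=0$ (via asphericity of $T^n$), and identifying the total Hurewicz obstruction with the Poincar\'e dual of $(f_1-f_2)^{\dis *}$ applied to the top class of $T^n$---is exactly the paper's route, including the use of theorem \ref{thm:1}(iv) for the bound $MCC\leq\#\pi_0(E)$ when $n\not=2$. The one step you flagged as the main obstacle (uniformity of the per-class contributions) is resolved in the paper not by a Jiang-type translation argument but concretely: when $\det\not=0$, the factorization $g\simeq q\scr\circ\dis f'$ makes $q\colon T^n\to T^n$ a $d$-fold covering, $d=|\det|$, so that $C(g,0)=\coprod_{i=1}^{d}f'^{-1}(\{y_i\})$ \emph{is} the Nielsen decomposition, each class is Poincar\'e dual to $\pm f'^{\dis *}(u)$, and $g^{\dis *}=\det\cdot f'^{\dis *}$ on $H^n(T^n;\Z)$. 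This delivers your identity $\mu(\omega)=|\det|\cdot c$ with a common class $c$, and the sign coherence comes for free (deck transformations of a torus covering are orientation-preserving translations). Your homogeneity argument can be made rigorous---it is essentially proposition \ref{prop:3} in the root setting, plus the observation that the induced automorphism of $H_{m-n}(M;\tilde\Z_\varphi)$ is the identity since translations are homotopic to $\id_{T^n}$---but the covering decomposition is cleaner and is what the paper does.

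The genuine gap is in your realization of $\det=r$ with vanishing invariants in the case $m=n=2$. Placing the two pulled-back $1$-classes in an isotropic subspace forces $(f_1-f_2)^{\dis *}\equiv0$ on $H^2$ and hence kills all four Nielsen numbers, but the theorem also demands $MCC=0$, i.e.\ that the pair be \emph{loose}, and vanishing Nielsen numbers do not imply this when $n=2$: the upper bound of theorem \ref{thm:1}(iv)---the very squeeze you use elsewhere---excludes $n=2$, and Wecken-type conclusions are exactly what fails in dimension two. You must exhibit looseness directly. The repair is easy: realize the isotropic classes by a map whose image lies in a proper subcomplex, e.g.\ pinch the surface onto $T^2\vee T^2$ and send the two summands into the two coordinate circles with the appropriate degrees, then translate so the image misses the neutral element. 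The paper's construction does this uniformly for \emph{all} $m,n\geq2$, the case $m=n=2$ included: $f$ is the composite $M'(n)\times M''\to M'(n)\to\bigvee^n S^1\xrightarrow{r\vee\id}\bigvee^n S^1\hookrightarrow T^n$ through the $1$-skeleton, which is not onto, so $(f,0)$ is loose while $\det=r$. Your $m\geq3$ construction through $\#^n(S^1\times S^{m-1})$ and the transversality argument for $m<n$ are fine; only the two-dimensional case as written is incomplete.
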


		\begin{proof}
			The torus \ $T^n$ \ has two special features. First, it is an abelian Lie
			Group whose addition induces also an addition of maps into \ $T^n$. We need
			to study only the (`translated') pair ($f \coloneqq f_1 - f_2, \; f_2 - f_2 = 0$)
			since it has the same coincidence behavior as \ $(f_1, f_2)$.

			Secondly, all tori are aspherical. Thus the decomposition
			\begin{equation*}
				f_{\dis *} \; \colon \; H_1 (M; \Z) \; \twoheadrightarrow \; f_{\dis *}
				(H_1 (M; \Z)) \cong \Z^k \; \hookrightarrow \; H_1 (T^n; \Z) = \Z^n
			\end{equation*}
			gives rise to maps
			\begin{equation*}
				M \xrightarrow{f'} T^k \xrightarrow{q} T^n
			\end{equation*}
			whose composite is homotopic to \ $f$ \ (compare \cite{wh}, V, 4.3).

			If \ $\det (u_1,\dots, u_n) = 0$ \ then the generators \ $u_1, \dots, u_n$ \
			of \ $f_{\dis *} (H_1 (M;\Z))$ \ are linearly dependant. Thus \ $k<n$ \ and \ $f$ \
			factors through the lower dimensional torus \ $T^k$ \ (after a suitable
			homotopy). Therefore the pair \ $(f,0)$ \ is loose, its Nielsen and
			minimum numbers vanish, and so does \ $f^{\dis *} (H^n (T^n; \Z))$. The
			Reidemeister number \ $\# \pi_0 (E (f, 0)) = \# (H_1 (T^n; \Z)/ f_{\dis *}
			(H_1(M;\Z))) = \# (\Z^n / q_{\dis *} (\Z^k))$ \	(cf. \ref{def:1} and
			\ref{equ:1}) is infinite here.

			In contrast, if \ $k=n$ then the linearly independant elements \ $u_1, \dots, u_n$ \ (which
			generate the subgroup \ $f_{\dis *} (H_1 (M; \Z))$ \ of \
			$H_1 (T^n; \Z) = \Z^n)$ \ span also a paralleliped in \ $\R^n$ \ whose
			$n$-dimensional volume equals
			\begin{equation*}
				d \coloneqq |\! \det (u_1, \dots, u_n) | \; = \; \# (H_1 (T^n;\Z)/f_{\dis *}
				(H_1(M;\Z)) \;\; = \;\; \# \pi_0 (E (f,0)).
			\end{equation*}
			We may assume that the map \ $q$ \ (in our factorization of \ $f$) is a $d$-fold
			covering map of \ $T^n$ \ with fiber \
			$q^{-1} (\{ 0 \}) = \{ y_1, \dots, y_d \}$. After making \ $f'$ \ transverse to
			the points of this fiber we see that
			\begin{equation*}
				C(f,0) = \coprod_{i=1}^d f'^{-1} (\{ y_i \}).
			\end{equation*}
			This is also the Nielsen decomposition of the coincidence manifold.
			A Nielsen component \ $C_i = f'^{-1} (\{ y_i \}),\; i = 1, \dots, d$, \ makes an
			essential contribution to the (weak, homological) looseness obstruction \
			$\mu (\omega (f, 0))$ \ (cf. diagram \ref{dia:1})
			precisely if its fundamental class \ $g_{\dis *} ([C_i])$ \ in \
			$H_{m-n} (M; \tilde \Z_M)$ \ does not vanish. But this is the Poincar\'{e} dual of \
			$\pm f'^{\dis *} (u)$, where \ $u$ \ generates \ $H^n (T^n; \Z) \cong \Z$
			(compare \cite{ms}, problem 11-C). We conclude that \ $N^\Z (f,0)$ \
			equals 0 or \ $d$, \ according as \ $f'^{\dis *} (u)$ \ vanishes or not. Observe also
			that \ $f^{\dis *} = \det (u_1, \dots u_n) \cdot f'^{\dis *}$ \ on \
			$H^n (T^n; \Z)$.

			In any case the invariants \ $|\!\det| \in [0, \infty)$ \ and
			\ $\# \pi_0 (E) \in (0, \infty]$ \ determine each other, but \ $|\! \det |$
			\ yields the sharper upper bound for \ $MCC$ \ whenever \ $n \not= 2$ \
			(compare with theorem \ref{thm:1},iv).

			Finally, given \ $m,n \geq 2$ \ and \ $r \in \Z$, consider the composed map
			\begin{equation*}
				f \; \colon \; M'(n) \times M'' \; \xrightarrow{\;\proj\;} \; M'(n) \; \xrightarrow{j} \; \bigvee^n
				S^1 \; \xrightarrow{r \vee \id} \; \bigvee^n S^1 \; \xrightarrow{\incl} \; T^n
			\end{equation*}
			where \ $M'(n)$ \ is an oriented surface of genus \ $n$; \ and \ $M''$; \ $\proj$; \ \ 
			$j$; \ \ $r \vee \id$; \ and incl; \ resp., denote an arbitrary closed connected smooth \
			$(m-2)$-manifold; the obvious projection; a map which induces an epimorphism of
			fundamental groups; the wedge of one degree $r$ map with the identity map on
			the remaining wedge of \ $n-1$ \ circles; and the inclusion of the 1-skeleton
			into the \ $n$-torus; resp. Then the pair \ $(f,0)$ \ is loose (since \ $f$ \ is not
			onto), but \ $\det (u_1, \dots, u_n) = r \cdot 1 \cdots 1$.
		\end{proof}
	\end{example}

	\begin{cor} \hspace{-.2cm} (cf. [Ko 4], 1.13).
		For all maps \ $f_1, f_2 \colon M \to S^1$ \ the minimum number \
		$MCC (f_1, f_2)$ \ agrees with the four Nielsen numbers and is characterized by the
		identity
		\begin{equation*}
			(f_{1 \dis *} - f_{2 \dis *}) (H_1 (M; \Z)) \; = \; MCC (f_1, f_2) \cdot H_1 (S^1;\Z).
		\end{equation*}
		In particular, the pair \ $(f_1, f_2)$ \ is loose if and only if \ $f_1$ \ and \
		$f_2$ \ are homotopic. \hfill$\Box$
	\end{cor}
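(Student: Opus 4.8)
The plan is to obtain this corollary by specializing Theorem \ref{thm:5} to the case $n=1$, where $T^1 = S^1$, and translating its cohomological hypotheses into the single homotopy dichotomy $f_1 \sim f_2$ versus $f_1 \not\sim f_2$. Since $n = 1 \not= 2$, all the sharp conclusions of Theorem \ref{thm:5} apply, and in dimension one the determinant degenerates to a single integer: if $u_1$ denotes a generator of the image subgroup $(f_{1\dis *} - f_{2\dis *})(H_1(M;\Z)) \subseteq H_1(S^1;\Z) = \Z$, then $|\!\det(u_1)| = |u_1|$ is just the nonnegative generator of that subgroup. The whole content of the corollary is therefore that this nonnegative generator is simultaneously $MCC$ and all four Nielsen numbers.

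First I would record the key translation. Because $S^1$ is a $K(\Z,1)$ and a topological abelian group, the class $(f_1 - f_2)^{\dis *}$ of the generator of $H^1(S^1;\Z)$ corresponds, under the isomorphism $H^1(M;\Z) \cong \operatorname{Hom}(H_1(M;\Z),\Z)$, to the homomorphism $f_{1\dis *} - f_{2\dis *}$ on $H_1$. Hence $(f_1 - f_2)^{\dis *} \not\equiv 0$ on $H^1(S^1;\Z)$ holds precisely when $f_{1\dis *} - f_{2\dis *} \not= 0$, i.e. exactly when $f_1 \not\sim f_2$; equivalently, $\det(u_1) = 0$ if and only if $f_1 \sim f_2$. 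This is the only point that needs a small argument, and I expect it to be the main (though modest) obstacle, since one must check that the two degenerate hypotheses appearing in Theorem \ref{thm:5} — vanishing of $(f_1 - f_2)^{\dis *}$ and vanishing of $\det$ — genuinely coincide in codimension $m-1$ and both reduce to $f_1 \sim f_2$.

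Next I would split into the two cases. If $f_1 \not\sim f_2$, then $(f_1 - f_2)^{\dis *} \not\equiv 0$ and, since $n = 1 \not= 2$, the ``in particular'' clause of Theorem \ref{thm:5} yields
\begin{equation*}
	MCC(f_1,f_2) = |\!\det(u_1)| = N^\#(f_1,f_2) = \tilde N(f_1,f_2) = N(f_1,f_2) = N^\Z(f_1,f_2).
\end{equation*}
As $|\!\det(u_1)|$ is precisely the nonnegative generator of $(f_{1\dis *} - f_{2\dis *})(H_1(M;\Z))$, the asserted identity $(f_{1\dis *} - f_{2\dis *})(H_1(M;\Z)) = MCC(f_1,f_2)\cdot H_1(S^1;\Z)$ follows immediately. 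If instead $f_1 \sim f_2$, then $\det = 0$, and the argument in the proof of Theorem \ref{thm:5} (its case $k<n$) shows directly that the translated pair $(f_1 - f_2, 0)$ is loose and that $MCC$ together with all four Nielsen numbers vanish; here both sides of the identity are the zero subgroup, so it holds trivially.

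Finally, the ``in particular'' assertion is then immediate: the pair $(f_1,f_2)$ is loose precisely when $MCC(f_1,f_2) = 0$, which by the established identity happens exactly when $(f_{1\dis *} - f_{2\dis *})(H_1(M;\Z)) = 0$, i.e. exactly when $f_1 \sim f_2$. (One may also see the looseness of a homotopic pair directly, since $f_1 - f_2$ is then null-homotopic and can be deformed to miss the identity element of $S^1$.) Everything beyond the $H^1$/$H_1$ identification of the first paragraph is a routine specialization of Theorem \ref{thm:5}.
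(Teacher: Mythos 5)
Your proposal is correct and matches the paper's intended route: the corollary is stated there as a direct consequence of Theorem \ref{thm:5} (the torus theorem, applied with $n=1\neq 2$), and your $H^1$/$H_1$ translation via $S^1=K(\Z,1)$ --- identifying vanishing of $(f_1-f_2)^{\dis *}$, vanishing of $\det(u_1)$, and $f_1\sim f_2$ --- is exactly the routine specialization the paper leaves implicit. The case split ($|\!\det|$ squeezing $MCC$ and all four Nielsen numbers when $f_1\not\sim f_2$; looseness of the translated pair when $f_1\sim f_2$) is sound as written.
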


	\noindent
	\textbf{Proof of theorem \ref{thm:8}.}
		We need to apply the arguments of the previous proof only to the case where \ $M = T^m, \ (f_1,f_2) = (f,0)$ \ and \ $k=n$. \
		After suitable `straightening` deformations (cf. \cite{ko11}, \S 2) the maps \ $f$ \ and $f'$ \ are surjective Lie group homomorphisms of tori. Then \ $0 \in T^n$ \ is a regular value of \ $f$ \ and the generic coincidence manifold \ $C(f,0)$ \ consists of affine $(m-n)$-dimensional subtori of \ $T^m$. \
		Each of them is a full (connected!) Nielsen class (and just one point if \ $m=n$); \
		its fundamental class yields a nontrivial element in \ $H_{m-n}(T^m; \Z)$. \ Hence
		\begin{equation*}
		\# \pi_0 (E (f,0)) = \vert \! \det (u_1, \dots, u_n) \vert = MCC (f,0) = N^\Z (f,0).
		\end{equation*}
		In fact, the map \ $\tilde g \colon C (f,0) \to E (f,0)$ \ (cf. \ref{equ:12}) turns out to be even a \textit{homotopy equivalence} (cf. \cite{ko11}, theorem 2.1 (ii)) in this very special setting of tori.
		Therefore, even after applying the \ $mod\ 2$ \ \textit{Hurewicz homomorphism} \ $\mu_2$ \ to \ $\tilde \omega (f,0)$ \ each Nielsen class gets still detected and \ $MCC (f,0)$ \ is also equal to the Nielsen number \ $\tilde N^{\Z_2} (f,0)$ \ based on
		\begin{equation*}
			\mu_2 (\tilde \omega (f,0)) = \tilde g_{\dis *} \lt[C(f,0)]_2\rt \; \in \; H_{m-n} (E (f,0) ; \Z_2)
		\end{equation*}
		(compare \ref{dia:1} and \ref{def:2}).

		If \ $MC (f,0) < \infty$ \ generic coincidence data cannot be detected by higher dimensional homology and hence \ $m = n$ \ or \ $N^\Z (f,0)= 0$.
		\hfill $\Box$
	\bigskip
	
	\noindent
	\textbf{Proof of theorem \ref{thm:7}.}
		The group structure of \ $\pi_m(S^n)$ \ allows us to simplify our arguments. Since \ $f_i$ \ is homotopic to \ $f_i'$ \
		for \ $i = 1,2$ \ and \ $(af_2', f_2')$ \ is loose we see that the pair \
		$([f] = [f_1'] - [a\,\scr \circ \dis f_2'], \;\, 0 = [f_2'] - [f_2'])$ \ has the same minimum, Reidemeister and Nielsen
		numbers as \ $(f_1, f_2)$ \ (compare \cite{ko6}, \S 6). Thus we need to check the claims of theorem \ref{thm:7} only for pairs of
		the form \ $(f,*)$ \ where \ $*$ \ denotes a constant map.

		Then the algebraic description of Reidemeister numbers (cf.\,\ref{equ:1})
		yields \ref{thm:7}a.

		Clearly all minimum and Nielsen numbers vanish if $\;[f]=0$. They agree also
		for a selfmap \ $z \to z^d$, $d \in \Z - \{ 0 \}$, of the unit circle \ $S^1$ \
		in the complex plane (indeed, we may assume that \ $* = 1 \in \C$,
		cf.\,\ref{thm:1}(i), and then each Nielsen class consists of a single
		$d^\text{th}$ root of unity and is obviously neither nullbordant nor
		nullhomologuous; compare e.\,g. \cite{ko4}, 1.13). This proves part of \ref{thm:7}b.

		The remainder of \ref{thm:7}b follows from the finiteness criterion \cite{ko6},
		corollary 6.10.	In spite of the restrictive assumption in theorem \ref{thm:1}(iv)
		above, we need not exclude the case $\; m=n=2$. Indeed, if a	generic map
		\ $f \colon S^2 \to S^2$ \ has a finite set \ $f^{-1}(\{ * \})$ \ of `roots'
		we may find a compact 2-disk \ $D$ \ in the domain and a point \ $y_0$ \ in the
		target such that \ $f^{-1} (\{ * \}) \subset \ring{D}$ \ and \
		$f(D) \subset (S^2 - \{ y_0 \}, *) \cong (\R^2 ,0)$; \ then deform \ $f \mid D$ \
		`linearly' until	\ $*$ \ has a single inverse image point.

		Our statement \ref{thm:7}c is proved in \cite{ko6}, example 1.12.

		The remaining claims in theorem \ref{thm:7}
		follow now from \cite{ko4}, 1.14, 1.15, 1.16, and from the definition of our
		looseness obstructions and Nielsen numbers. \hfill $\Box$

\bigskip
	Finally we turn to the setting of classical fixed point and coincidence theory.

	\begin{exa} {\bf $m = n \geq 1$} (compare Example I in \cite{ko4}).
		Here generic coincidence sets consist of finitely many points, each counted with an
		``index" \ $\pm 1 \in \Z$ \ or	\ $1 \in \Z_2$, according to the orientation
		behavior of \ $\varphi$ \ and	\ $\tilde \varphi$ \ (cf. 2.9). 
		We have:
		\bigskip
		\begin{align*}
			&\omega(f_1,f_2) \; \in \; \Omega_0(M;\varphi)=H_0(M;\tilde \Z_\varphi)=
			\begin{cases}
				\Z & \text{ if } \varphi \text{ is oriented}; \\
				\Z_2 & \text{ if } \varphi \text{ is not orientable.}
			\end{cases} \\
			&\tilde\omega(f_1,f_2) \; \in \; \Omega_0(E(f_1,f_2),\tilde\varphi) =
			\bigoplus_{\substack{Q \in \pi_0 (E (f_1,f_2)) \\
								\text{with } \tilde \varphi \mid Q   \\
								\text{oriented}}}
			\Z \quad \oplus
			\bigoplus_{\substack{Q \in \pi_0 (E (f_1,f_2)) \\
								\text{with } \tilde \varphi \mid Q \\
								\text{non orientable}}}
			\Z_2
		\end{align*}
		(Here the first Stiefel-Whitney classes of \ $M$ \ and \ $N$ \ may help us to
		decide whether a pathcomponent \ $Q$ \ of \ $E (f_1,f_2)$ \ contributes \ $\Z$ \ or \ $\Z_2$ \ as a direct summand to  \
		$\Omega_0 (E (f_1, f_2); \tilde \varphi)$ \ (cf. \cite{ko4}, 5.2): if \ $x_0 \in M$ \ is a coincidence point with \
		$y_0 \coloneqq f_1 (x_0) = f_2 (x_0)$, pick \ $[\theta]$ \ in \ $\pi_1 (N; y_0)$ \ such that \ $(x_0, \theta) \in Q$,
		i.\,e. the element \ $Q \in \pi_0 (E (f_1, f_2))$ \ of the geometric Reidemeister set corresponds -- via the canonical
		bijection, cf. \ref{equ:1} -- to the class of \ $[\theta]$ \ in the algebraic Reidemeister set. Then \
		$\tilde \varphi \mid Q$ \ is orientable if and only if
		\begin{equation*}
			w_1 (M) (\mu) = f_1^{\dis *} (w_1(N)) (\mu)
		\end{equation*}
		for all \ $\mu \in \pi_1 (M, x_0)$ \ such that \
		$f_{2 \dis *} (\mu) = [\theta]^{-1} \cdot f_{1 \dis *} (\mu) \cdot [\theta])$.

		The stabilizing map \
		$\stab \colon \Omega^\# (f_1, f_2) \to \Omega_0 (E (f_1, f_2); \tilde \varphi)$ \
		(cf. \ref{dia:1})
		is bijective except possibly when \ $m = n = 1$.
		\bigskip

		\textit{Special case: fixed point theory.} If \ $f$ \ is a selfmap of \ $M$ \ and we
		consider the coincidences of \ $(f_1, f_2) \coloneqq (\id, f)$ \ (i.\,e. the fixed
		points of \ $f$), then the coefficient bundles \ $\varphi = TM - TM$ \ and \
		$\tilde \varphi$ \ (cf. 2.9)
		are canonically oriented, all indices are integers and according to a theorem of
		Hopf, the index sum \ $\omega (\id, f)$ \ equals the Lefschetz number (cf. \cite{h} and \cite{l}).

		Moreover
		\addtocounter{thm}{1}
		\begin{equation}
			N^\# (\id, f) \; = \; \tilde N(\id, f) \; = \; N(\id, f) \; = \; N^\Z (\id, f)
		\end{equation}
		is the classical Nielsen number \ $N(f)$ \ of \ $f$ \ (cf. \cite{b})

		Since the early 1940's it was known from the work of Nielsen (on surfaces, cf. \cite{ni}) and Wecken (for \ $m \geq 3$,
		cf. \cite{we}) that this Nielsen number agrees with the minimum number \ $MF (f) = MC (\id, f)$ \ of \ $f$ \ whenever \
		$m \not= 2$ \ or the Euler characteristic \ $\chi(M)$ \ is nonnegative. In contrast, B. Jiang proved in 1984/85 that \
		$MF (f) - N (f)$ \ can be strictly positive, cf. \cite{ji1}, \cite{ji2} (and even arbitrarily large,
		cf. \cite{z}, \cite{ke}, \cite{ji3}) for suitable selfmaps of any surface with \
		$\chi(M) < 0$.
	\end{exa}

\section{Wecken theorems.} \label{sec:4}

	The inequalities in theorem \ref{thm:1} lead to the following questions.
	\bigskip

	When are they sharp? When is our minimum number $MCC$ (or even $MC$) \textit{equal}
	to a Nielsen number (and, if so, to which one?)
	\bigskip

	Positive results in this direction are costumarily called \textit{Wecken theorems}.
	In our framework they come in different types. The weakest (and most common) type
	would say that $MCC\equiv N^\#$ in a certain setting, while the strongest (but
	most unlikely when $m>n$) type of a Wecken theorem would claim that $MCC$ agrees
	always with $N^\Z$ (and hence also with our other three Nielsen numbers).


	\begin{exa}{\bf maps between spheres}\label{exa:4} (compare theorem \ref{thm:7}).
		Given a fixed dimension combination $(m,n)$ where $\; m,n\geq1$, consider
		arbitrary pairs of maps $\; f_1,f_2\colon S^m\to S^n$.

		If \ $m > n$ \ and \ $\pi_m (S^n) \not= 0$, then
		\begin{equation*}
			MCC\equiv N^\#\not\equiv N^\Z \equiv 0
		\end{equation*}
		and the two intermediate types of a Wecken theorem hold (or not) according as
		the total stabilized Hopf-James homomorphism
		\begin{equation*}
			\Gamma \coloneqq \bigoplus E^\infty \scr \circ \dis
			\gamma_k\colon\pi_m(S^n)\to \bigoplus_{k\geq1} \dis \pi_{m-1-k(n-1)}^S
		\end{equation*}
		(compare \cite{j} or also \cite{ko4}, 1.14-1.17), or the iterated Freudenthal
		suspension homomorphism
		\begin{equation*}
			E^\infty\colon\pi_m(S^n)\to\pi_{m-n}^S
		\end{equation*}
		resp., are injective (or not).

		In the remaining case where \ $m \leq n$ \ or \ $\pi_m (S^n) = 0$ \ we have
		\begin{equation*}
			MC \equiv MCC \equiv N^\# \equiv\tilde N \equiv N \equiv N^\Z.
		\end{equation*}
		This follows e.\,g. from theorem \ref{thm:7} above where the actual values of our
		minimum and Nielsen numbers are also given.		\hfill $\Box$
	\end{exa}
	\bigskip


	\begin{example}{\bf maps from spheres to spherical space forms.}\vspace{-1ex}
	\label{exa:3}
		\begin{thm}\label{thm:10}
			Given a free smooth action of a \textnormal{nontrivial} finite group \ $G$ \ on \ $S^n$, consider maps \
			$f_1,f_2\colon S^m\longrightarrow S^n/G \eqqcolon N$ \ into the resulting orbit manifold, \ $m, n \geq 1$.

			Then
			\begin{equation*}
				MCC (f_1,f_2) = N^\# (f_1,f_2) =
				\begin{cases}
					0 \text{ \ if \ } m<n \text{ \ or \ } (f_1 \sim f_2 \text{ \ and }E(\partial_N(f_1))=0); \\
					1 \text{ \ if \ } f_1 \sim f_2 \text{ \ and \ } E (\partial_N (f_1)) \not= 0; \\
					\#\pi_0 (E (f_1,f_2)) \text{ \ else;}
				\end{cases}
			\end{equation*}
			{\bf except} precisely when \ $f_1,f_2$ \ are homotopic (in the base point free sense) and
			$\partial_N (f_1) \not= 0$ \ but \ $E \scr \circ \dis \partial_N (f_1) = 0$ ; in this case \ $MCC (f_1,f_2) = 1$ \ but \
			$N^\#(f_1,f_2) = 0$. (Such an exception is possible only when \ $n$ \ is even and hence \ $G \cong \Z_2$).

			Moreover the Reidemeister number \ $\# \pi_0 (E(f_1,f_2))$ \ equals the order
			of \ $G$ \ whenever \ $m,n \geq 2$.
		\end{thm}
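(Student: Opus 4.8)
The plan is to split the analysis according to the codimension $m-n$ and, in nonnegative codimension, according to whether $f_1$ and $f_2$ are freely homotopic, feeding each piece into the selfcoincidence result \ref{thm:9}, the sphere result \ref{thm:7}, and the comparison \ref{thm:11}. First I would dispose of $m<n$: the generic coincidence manifold $C(f_1,f_2)$ has negative dimension $m-n$, hence is empty once $(f_1,f_2)$ is made transverse to the diagonal, so $(f_1,f_2)$ is loose and $MCC=N^\#=0$. From now on assume $m\ge n$. The Reidemeister count is then immediate from \ref{equ:1}: the set $\pi_0(E(f_1,f_2))$ is $\pi_1(N;y_0)$ modulo the equivalence $[\theta']=f_{1*}(\mu)^{-1}[\theta]f_{2*}(\mu)$, $\mu\in\pi_1(S^m)$; for $m,n\ge2$ one has $\pi_1(S^m)=0$ and $\pi_1(N)=G$, so the equivalence is trivial and $\#\pi_0(E(f_1,f_2))=\#G$.

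For the selfcoincidence case $f_1\sim f_2$ I would apply Theorem \ref{thm:9} directly to $(f,f)$. It gives $MCC(f,f),N^\#(f,f)\in\{0,1\}$, with $N^\#(f,f)=0$ iff $E\circ\partial_N([f])=0$ (equivalence of (iv) and (v)), and $MCC(f,f)=0$ iff $\partial_N([f])=0$, since $\pi_1(N)=G\neq0$ forces the equivalence of (ii) and (iii). Reading off the three alternatives $\partial_N([f])=0$; $\partial_N([f])\neq0$ with $E\partial_N([f])=0$; $E\partial_N([f])\neq0$ produces exactly the values $0$, the exceptional discrepancy $MCC=1\neq0=N^\#$, and $1$. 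By Corollary \ref{cor:2} together with the Euler characteristic relation $\chi(S^n)=\#G\cdot\chi(N)$, the middle alternative can occur only for $n$ even, where this relation forces $\#G\le2$ and hence $G\cong\Z_2$.

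For $f_1\not\sim f_2$ (still $m\ge n$, so $\#\pi_0(E)=\#G$ when $n\ge2$) Theorem \ref{thm:11} already yields $MCC(f_1,f_2)=N^\#(f_1,f_2)$, so it remains only to show that each of the $\#G$ Nielsen classes is \emph{essential}, giving $N^\#=\#\pi_0(E)$. Since $m\ge2$ I would lift to the universal cover, choosing $\tilde f_1,\tilde f_2\colon S^m\to S^n$ with $p\circ\tilde f_i=f_i$. By the covering description of Nielsen numbers (\cite{ko7}, 3.4) the class indexed by $g\in G$ is carried by the lifted pair $(\tilde f_1, g\circ\tilde f_2)$, and it is essential precisely when $N^\#(\tilde f_1, g\circ\tilde f_2)\neq0$. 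By Theorem \ref{thm:7}c this fails exactly when $\tilde f_1\sim a\circ g\circ\tilde f_2$, where $a$ is the antipodal map. Now $\deg a=(-1)^{n+1}$, so as a selfmap of $S^n$ the map $a$ is homotopic to a deck transformation $\tau\in G$: take $\tau=\id$ for $n$ odd, and for $n$ even use $\#G=2$ whose nontrivial element has degree $-1=\deg a$. Hence $a\circ g\circ\tilde f_2\sim(\tau g)\circ\tilde f_2$ with $\tau g\in G$, so an inessential class would give $\tilde f_1\sim(\tau g)\circ\tilde f_2$ and therefore, after projecting by $p$, $f_1\sim f_2$ — contradicting the hypothesis. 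Thus every class is essential and $MCC=N^\#=\#\pi_0(E)$.

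The hard part is the essentiality step of the previous paragraph: identifying the contribution of each Nielsen class with the upstairs invariant $N^\#(\tilde f_1,g\tilde f_2)$ through the covering description, and then converting the antipodal condition $\tilde f_1\sim a\circ g\circ\tilde f_2$ into $f_1\sim f_2$ via the degree computation $a\sim\tau\in G$. I expect the low dimensional boundary cases to require separate care: when $n=1$ the cover $S^n$ is not simply connected and the lifting scheme breaks down, but then $N=S^n/G\cong S^1$ and the statement reduces to the circle target results (the corollary to Theorem \ref{thm:5} and Theorem \ref{thm:7}); when $m=1$ and $n\ge2$ one is already in the case $m<n$. Finally, the surface case $n=2$ forces $N=\R P(2)$, yet here no extra argument is needed, because the equality $MCC=N^\#$ from Theorem \ref{thm:11} makes the restriction $n\neq2$ in Theorem \ref{thm:1}(iv) irrelevant to the conclusion.
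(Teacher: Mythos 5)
Your overall architecture tracks the paper's joint proof of Theorems \ref{thm:11} and \ref{thm:10} closely: dispose of $m<n$ and $n=1$ separately, compute $\#\pi_0(E(f_1,f_2))=\#G$ from the algebraic Reidemeister description, feed $f_1\sim f_2$ into Theorem \ref{thm:9} (including the Euler characteristic argument forcing $G\cong\Z_2$ for the exceptional case), and prove essentiality of every Nielsen class by lifting to $S^n$ and invoking Theorem \ref{thm:7}c. Your conversion of $\tilde f_1\sim a\circ g\circ\tilde f_2$ into $f_1\sim f_2$ via the degree computation $a\sim\tau$ with $\tau\in G$ is sound and is essentially the paper's argument in different clothing: the paper gets $g\circ\tilde f_1\sim a\circ\tilde f_1$ from the observation that $(\tilde f_1,g\circ\tilde f_1)$ is coincidence-free because the action is free, citing Greenberg's exercise 16.7, which packages the same antipodal/degree content without a parity case split.

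The genuine gap is your use of Theorem \ref{thm:11} to obtain $MCC(f_1,f_2)=N^\#(f_1,f_2)$ when $f_1\not\sim f_2$: within this paper that is circular, since \ref{thm:11} has no independent proof — it is established only by the joint proof in section 4, where the implication you are borrowing is precisely the ``else'' case of \ref{thm:10} that you are trying to prove. The paper instead closes the argument with the sandwich $\#G=\#\pi_0(E(f_1,f_2))\geq MCC(f_1,f_2)\geq N^\#(f_1,f_2)$ combined with the essentiality statement $N^\#(f_1,f_2)=\#G$; the needed upper bound $MCC\leq\#\pi_0(E)$ comes from Theorem \ref{thm:1}(iv) when $n\neq2$, but when $n=2$ — exactly the case you declare needs ``no extra argument'' — it requires Jezierski's surface coincidence result (\cite{je}, 4.0). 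So the $n=2$ case is not free: without that input you have no bound $MCC\leq\#G$ there, and without some such bound Theorem \ref{thm:11} itself is unproved, so your appeal to it begs the question. The repair is straightforward and turns your proposal into the paper's proof: replace the citation of \ref{thm:11} by Theorem \ref{thm:1}(iii)--(iv) together with \cite{je} for $n=2$; then your essentiality computation gives $N^\#=\#G$, the sandwich yields $MCC=N^\#=\#\pi_0(E)$, and Theorem \ref{thm:11} drops out as a corollary rather than serving as an input.
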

	\noindent
	\textbf{Proof of theorems \ref{thm:11} and \ref{thm:10}}.
	Clearly vanishing conditions concerning \ $\partial_N (f_1)$ \ or \ $E ( \partial_N (f_1))$ \ are independant of the basepoints chosen when defining
	the boundary homomorphism \ $\partial_N$. \ E.\,g. in the special case where the maps \ $f_1,f_2$ \ are homotopic \ $E( \partial_N (f_1)) = 0$ \ if
	and only if \ $N^\# (f_1,f_2) = 0$ \ (cf. \cite{ko7}, 5.6\,-\,7).
	In particular, the claims in \ref{thm:10} and  \ref{thm:11}, resp., hold for \ $n = 1$ \ and for trivial \ $G$ , resp. (use theorem \ref{thm:7} above).

	Thus we may assume that \ $m,n,\# G \geq 2$ . In view of a result of J. Jezierski (cf. \cite{je}, 4.0) we see that here
	\begin{equation*}
		\# G = \# \pi_0 (E (f_1, f_2)) \geq MCC (f_1, f_2) \geq N^\# (f_1, f_2)
	\end{equation*}
	even when \ $n=2$ \ (compare \ref{equ:1}, \ \ref{thm:1} and \cite{ko7}, 6.2).

	According to \ref{thm:7} and \cite{ko7}, 3.4, we have: \ $\# G \not= N^\# (f_1, f_2)$ \ if and only if there are liftings \
	$\tilde f_1, \tilde f_2 \colon S^m \to S^n$ \ of \ $f_1, f_2$ \ such  that the pair \ $(\tilde f_1, \tilde f_2)$ \ is loose. In turn, this is
	equivalent to \ $f_1, f_2$ \ being homotopic. Indeed, for every nontrivial element \ $g$ \ of \ $G$ \ the pair \
	$(\tilde f_1, g \scr \circ \dis \tilde f_1)$ \ is obviously also loose. Therefore both \ $\tilde f_2$ \ and \ $g \scr \circ \dis \tilde f_1$ \ are
	homotopic to \ $a \scr \circ \dis \tilde f_1$ \ (cf. \cite{gr}, exercise 16.7). Thus \ $g \scr \circ \dis \tilde f_1 \sim \tilde f_2$ \  and \
	$f_1 \sim f_2$. This case is discussed in detail in theorem \ref{thm:9} (to be proved in section 5 below). \hfill $\Box$
	\bigskip

	Here are also a few sample results concerning the minimum number \ $MC$ \ of coincidence \textit{points} (compare e.\,g. \cite{ko6}, 1.13).
	\end{example}
	\begin{prop}\label{prop:2}
		For all maps \ $f_1, f_2 \colon S^m \to N = S^n/G$ \ into an \textnormal{odd-dimensional} spherical space form, \ $n \geq 3$, we have:
		\begin{equation*}
			MC (f_1, f_2) = \begin{cases}
				\infty & \text{if } [f_1] - [f_2] \, \not\in \, p_{\dis *} {\scr \circ} E (\pi); \\
				0      & \text{if } f_1 \sim f_2 \text{ or } m<n; \\
				\# G   & \text{else.}
			\end{cases}
		\end{equation*}
		Here the diagram
		\begin{equation*}
			\pi \; \subset \; \pi_{m-1} (S^{n-1}) \xrightarrow{E} \pi_m (S^n) \xrightarrow{p_{\dis *}} \pi_m (N)
		\end{equation*}
		involves suspension and projection; \ $\pi$ \ denotes all of \ $\pi_{m-1} (S^{n-1})$ \ if \ $\# G \leq 2$ \; and the kernel of the Hopf-Hilton homomorphism
		\begin{equation*}
			h \, \coloneqq \, \bigoplus_{i=0}^{\infty} h_j \, \colon \; \pi_{m-1} (S^{n-1}) \;\to \; \pi_{m-1} (S^{2n-3}) \, \oplus \, \pi_{m-1} (S^{3n-5}) \, \oplus \, \dots
		\end{equation*}
		(cf. \cite{wh}, XI, 8.5) \ if \ $\# G \geq 3$.
	\end{prop}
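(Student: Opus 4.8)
The plan is to combine the lower and upper bounds of Theorem \ref{thm:1} with the identification of $MCC$ in Theorem \ref{thm:10}, reducing everything to a single \emph{finiteness} question for $MC$, and then to settle that question by a suspension criterion. First I would record two consequences of $n$ being odd. Since $\chi(N)=\chi(S^n)/\#G=0$, the bundle $TN$ admits a nowhere-zero section, so every pair $(f,f)$ is loose by small deformation and $\partial_N\equiv0$; in particular $E\circ\partial_N\equiv0$. Moreover every $g\in G$ acts on $S^n$ without fixed points, hence (by the Lefschetz formula) with degree $(-1)^{n+1}=1$, so $g\simeq\id_{S^n}$ and $g_\ast=\id$ on every $\pi_m(S^n)$; the same applies to the antipodal map. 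This already disposes of the value $0$: if $m<n$ then $\dim C=m-n<0$ and a generic representative has empty coincidence set, while if $f_1\sim f_2$ then Theorem \ref{thm:10} (with $E(\partial_N(f_1))=0$) gives $MCC(f_1,f_2)=0$, i.e. $(f_1,f_2)$ is loose and $MC=0$. Thus I may assume $m\geq n\geq 3$ and $f_1\not\sim f_2$.

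Next I would pin down the dichotomy. In this range Theorem \ref{thm:10} yields $MCC(f_1,f_2)=\#\pi_0(E(f_1,f_2))=\#G$, so $MC(f_1,f_2)\geq\#G$ by Theorem \ref{thm:1}(iii). On the other hand $n\geq 3$ gives $(m,n)\neq(2,2)$, so Theorem \ref{thm:1}(iv) forces $MC(f_1,f_2)\leq\#\pi_0(E)=\#G$ \emph{or} $MC(f_1,f_2)=\infty$. Hence $MC(f_1,f_2)\in\{\#G,\infty\}$, and the whole statement reduces to deciding when $MC$ is finite.

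For that I would pass to the universal cover $p\colon S^n\to N$, so that $p_\ast\colon\pi_m(S^n)\xrightarrow{\cong}\pi_m(N)$ for $m\geq2$ and, for chosen liftings $\tilde f_1,\tilde f_2$, the coincidence set decomposes as $C(f_1,f_2)=\coprod_{g\in G}C(\tilde f_1,g\circ\tilde f_2)$ into its $\#G$ Nielsen classes. Because $g_\ast=\id$, each class carries one and the same sphere-level obstruction $\delta:=[\tilde f_1]-[\tilde f_2]=p_\ast^{-1}([f_1]-[f_2])\in\pi_m(S^n)$ (and, since $n$ is odd, the antipodal twist present in Theorem \ref{thm:7} becomes trivial, which is why the criterion is stated with $[f_1]-[f_2]$ rather than $[f_1]-[a\circ f_2]$). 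By the Pontryagin--Thom description of $\omega^\#$ and the finiteness criterion \cite{ko6}, 6.10 (already used in the proof of Theorem \ref{thm:7}b), $MC$ is finite precisely when the coincidence datum of each class can be desingularized to a single point, which happens exactly when $\delta$ is the suspension $E(\beta)$ of a class $\beta\in\pi$, i.e. $[f_1]-[f_2]\in p_\ast\circ E(\pi)$; in that case one realizes one point per class, so $MC=\#G$, and otherwise $MC=\infty$.

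The hard part will be justifying the description of $\pi$, and in particular the jump at $\#G=3$. The disjoint twisted diagonals $\Delta_g=\{(gy,y)\}\subset S^n\times S^n$, which the lift $(\tilde f_1,\tilde f_2)$ must meet finitely, are the geometric carriers of the $\#G$ classes. For $\#G\leq2$ only $\Delta_e$ and one further diagonal occur; up to homotopy these are the zero-section and the antidiagonal, so the relevant model is the unit tangent sphere bundle $ST(S^n)=V_{n+1,2}$ together with its EHP sequence \ref{equ:13}, and the sole desingularization obstruction is the Freudenthal obstruction measured by $E$. Hence \emph{any} desuspension is admissible and $\pi=\pi_{m-1}(S^{n-1})$. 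For $\#G\geq3$ at least three disjoint diagonals must be simplified simultaneously, which forces the analysis into the full James filtration of $\Omega S^n$; I expect to show that a common desuspension $\beta$ compatible with all translates exists if and only if all of its James--Hopf (Hopf--Hilton) invariants vanish, i.e. $\beta\in\ker h=\pi$ (\cite{wh}, XI, 8.5). Making this final equivalence precise---that three or more sheets detect exactly the higher invariants $h_j$, whereas two sheets detect none beyond suspension---is the crux of the argument.
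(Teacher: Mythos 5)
Your reduction to the dichotomy $MC(f_1,f_2)\in\{\#G,\infty\}$ is sound and follows the paper's machinery: for odd $n$ you correctly obtain $\chi(N)=0$ and $\partial_N\equiv 0$, every $g\in G$ and the antipodal map are homotopic to $\id_{S^n}$ (Lefschetz plus Hopf's degree theorem), which legitimizes the untwisted class $\delta = p_{\dis *}^{-1}([f_1]-[f_2])$; the cases $m<n$ and $f_1\sim f_2$ give the value $0$; and the squeeze $\#G=MCC(f_1,f_2)\le MC(f_1,f_2)$ together with theorem \ref{thm:1}(iv) pins $MC$ to $\#G$ or $\infty$ (for trivial $G$ you should quote theorem \ref{thm:7} rather than \ref{thm:10}, a minor point). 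A consequence you rightly exploit: finite $MC$ forces \emph{exactly one} coincidence point in each of the $\#G$ essential Nielsen classes. Note that the paper itself contains no proof of proposition \ref{prop:2} --- it defers to \cite{ko6}, 1.13 --- so everything rests on the step you yourself call the crux.

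That step is genuinely missing, in two respects. First, \cite{ko6}, 6.10 cannot deliver what you ask of it: as used for theorem \ref{thm:7}b it is a finiteness criterion for maps between \emph{spheres} and yields only the suspension condition $\delta\in E(\pi_{m-1}(S^{n-1}))$; it is blind to $\#G$, so asserting that desingularization ``happens exactly when $\delta=E(\beta)$ with $\beta\in\pi$'' presupposes the $\#G$-dependent description of $\pi$ that is to be proved. Second, the mechanism behind the jump at $\#G=3$ is not your picture of ``three disjoint twisted diagonals'' but a concrete extension problem. With one point per class, a lift $\tilde f'$ has a single root over each of the $d=\#G$ points of $p^{-1}(\{*\})$; collapsing $S^n$ onto $S^n/(S^n-\mathring D_g)$ shows that \emph{each} local index $\beta_g\in\pi_{m-1}(S^{n-1})$ satisfies $E\beta_g=\pm\delta$ (each class alone forces the desuspension, not merely their sum), while on the complement $S^m-\coprod_g \mathring B_g\simeq\bigvee_{d-1}S^{m-1}$ the map must land in $S^n-p^{-1}(\{*\})\simeq\bigvee_{d-1}S^{n-1}$ extending the prescribed boundary data. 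For $d\le 2$ that target retracts to a single $(n-1)$-sphere, so nothing beyond desuspension obstructs and $\pi=\pi_{m-1}(S^{n-1})$; for $d\ge 3$ Hilton's theorem (\cite{wh}, XI) decomposes $\pi_{m-1}\bigl(\bigvee_{d-1}S^{n-1}\bigr)$, and the distributive-law expansion $(\sum_i e_i)\circ\beta=\sum_i e_i\circ\beta+\sum_w w\circ h_w(\beta)$ (where the $e_i$ are the wedge inclusions, $w$ runs over basic Whitehead products and $h_w$ are the Hopf--Hilton invariants) is exactly where $\ker h$ enters, in both directions: matching Hilton coordinates obstructs the extension unless the invariants vanish, and conversely $\beta\in\ker h$ permits the construction realizing $MC=\#G$. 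Your sketch never carries out this matching --- you write ``I expect to show'' --- so the identification of $\pi$, i.e.\ the entire content separating proposition \ref{prop:2} from the sphere case of theorem \ref{thm:7}, remains unproven; the missing details are in \cite{ko6} (1.13, \S 7, and the geometry of almost injective points, p.\ 655), which the paper cites for precisely this purpose.
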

	Observe that no specific feature of the group action -- apart from the order of \ $G$ \ -- enters the picture here. Such phenomena and the appearance of Hopf invariants here are related to the geometry of (almost) injective points in \cite{ko6}, p.\,655.

	\textit{Special case: \ $m = 4$, $n = 3$}. Here \ $h \colon \pi_3 (S^2) \xrightarrow{\cong} \Z$ \ and \ $E$ \ maps this group onto \ $\pi_4 (S^3) \cong \pi_4 (N) \cong \Z_2$. Hence we have for every map \ $f \colon S^4 \to N$
	\begin{equation*}
		MC (f, *) = \begin{cases}
			\infty & \text{if } [f] \not= 0 \text{ and } \# G \geq 3; \\
			\# G   & \text{if } [f] \not= 0 \text{ and } \# G \leq 2; \\
			0      & \text{if } [f] = 0.
		\end{cases}
	\end{equation*}
	In particular, if \ $\# G \geq 3$ \ and \ $[f] \not= 0$ \ then
	\begin{equation*}
		MC (f, *) = \infty \quad \text{but} \quad MC (\tilde f, \tilde *) = 1
	\end{equation*}
	where \ $\tilde f \colon \; S^4 \to S^3$ \ is a lifting of \ $f$. \hfill $\Box$
	\bigskip


	\begin{exa}{\bf maps from spheres to (real, complex or quaternionic) projective
		spaces.}\label{exa:5}

		Let \ $\mathbb K = \mathbb R$, \ $\mathbb C$ \ or \ $\mathbb H$ \ denote the field of real, complex or quaternionic numbers, and let \ $d = 1, 2,$ or $4$ \ be its real dimension. Let \ $\mathbb K P (n')$ and $V_{n'+1, 2} (\mathbb K)$, \ resp., denote the corresponding space of lines through $0$ and of orthonormal 2--frames, resp., in \ $\mathbb K^{n'+1}$. The real dimension of \ $N = \mathbb K P (n')$ \ is \ $n := d \cdot n'$.

		Consider the diagram
		\addtocounter{thm}{1}
		\begin{equation}\label{equ:16}
		\xymatrix{
			\dots                   \ar[r]                                       &
			\pi_m (V_{n'+1,2} (\K)) \ar[r]^{p_{\K \dis *}}                       &
			\pi_m (S^{n+d-1})       \ar[r]^{\partial_{\K}}  \ar[d]^{p_{\dis *}}  &
			\pi_{m -1} (S^{n-1})    \ar[r]                  \ar[d]^{E}           &
			\dots                                                                \\
			& & \pi_m (\K P (n'))   \ar@{-->}[ur]^{\partial_N}                         &
			\pi_m (S^n)
		}
		\end{equation}
		determined by the canonical fibrations \ $p$ \ and \ $p_{\mathbb K}$; \ $E$ \ denotes the Freudenthal suspension homomorphism.

		In view of theorem \ref{thm:7} above (and the appendix in \cite{ko6}) the following result determines the
		Nielsen and minimum numbers \ $N^\#, \, MCC$ and $MC$ \ for all \ $f_1, f_2 : S^m \to
		\mathbb K P (n'), \ m, n' \ge 1$. \ (Proofs can be found in section 6 of \cite{ko7}).

		\begin{thm}\label{thm:3}
		Assume \ $m, n' \ge 2$. Given \ $[f_i] \in \pi_m (\mathbb K P (n'))$, \ there is a unique homotopy class \ $[\widetilde f_i] \in \pi_m (S^{n + d - 1})$ \ such that \ $p_* ([\widetilde f_i]) - [f_i]$ \ lies in the image of \ $\pi_m (\mathbb K P (n' - 1)), \ i = 1, 2$. (Since this image is isomorphic to \ $\pi_{m -1} (S^{d -1})$, we may assume that \ $\widetilde f_i$ \ is a genuine lifting of \ $f_i$ \ when \ $\mathbb K = \mathbb R$ \ or when \ $m > 2$ \ and \ $\mathbb K = \mathbb C)$. Define \ $[f'_i] := [p \scirc \widetilde f_i ] \in \pi_m (\mathbb K P (n'))$.

		Each pair of homotopy classes \ $[f_1], [f_2] \in \pi_m (\mathbb K P (n'))$ \ satisfies precisely one of the seven conditions which are listed in table \ref{tab:1}, together with the corresponding Nielsen and minimum numbers.

		\end{thm}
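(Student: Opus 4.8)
The plan is to split Theorem~\ref{thm:3} into a purely homotopy-theoretic part (existence and uniqueness of the lift class $[\widetilde f_i]$ together with the identification of $\operatorname{im}\big(\pi_m(\K P(n'-1))\to\pi_m(\K P(n'))\big)$) and a coincidence-theoretic part (the seven rows of table~\ref{tab:1}), reducing the latter to the sphere-target computation of Theorem~\ref{thm:7} and the spherical space form computation of Theorem~\ref{thm:10} by means of the bundle $p$. First I would record the three fibrations underlying diagram~\ref{equ:16}: the Hopf-type bundle $S^{d-1}\to S^{n+d-1}\xrightarrow{p}\K P(n')$, the analogous bundle $S^{d-1}\to S^{n-1}\xrightarrow{p''}\K P(n'-1)$, and the Stiefel bundle $S^{n-1}\to V_{n'+1,2}(\K)\xrightarrow{p_\K}S^{n+d-1}$; the inclusion $\K^{n'}\subset\K^{n'+1}$ makes $p''$ and $p$ into a map of fibrations with fibre the identity on $S^{d-1}$ and base map $i\colon\K P(n'-1)\hookrightarrow\K P(n')$.

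For existence and uniqueness I would argue entirely with the long exact sequences of $p$ and $p''$. Because $n=dn'\geq 2d>d$, both fibre inclusions $S^{d-1}\hookrightarrow S^{n+d-1}$ and $S^{d-1}\hookrightarrow S^{n-1}$ are nullhomotopic, so the connecting maps $\partial_p\colon\pi_m(\K P(n'))\to\pi_{m-1}(S^{d-1})$ and $\partial_{p''}$ are onto while $p_*$ is injective (its kernel is the image of $\pi_m(S^{d-1})\to\pi_m(S^{n+d-1})$, which is $0$). Naturality of the connecting homomorphism gives $\partial_p\circ i_*=\partial_{p''}$, hence $\partial_p$ is already onto on $\operatorname{im}i_*$; moreover $i\circ p''$ is the nullhomotopic attaching map of the top cell of $\K P(n')$, so any $i_*(\alpha)\in\ker\partial_p$ has $\alpha=p''_*(\beta)\in\operatorname{im}p''_*$ and therefore $i_*(\alpha)=(i\circ p'')_*(\beta)=0$. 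Thus $\ker\partial_p\cap\operatorname{im}i_*=0$ and (for $m\geq 2$, where everything is abelian) $\pi_m(\K P(n'))=\operatorname{im}p_*\oplus\operatorname{im}i_*$ with $\operatorname{im}p_*\cong\pi_m(S^{n+d-1})$ and $\partial_p\colon\operatorname{im}i_*\xrightarrow{\cong}\pi_{m-1}(S^{d-1})$. Expanding $[f_i]$ in this decomposition yields the unique $[\widetilde f_i]$ and the claimed isomorphism $\operatorname{im}i_*\cong\pi_{m-1}(S^{d-1})$; the genuine-lifting remark then follows because $\pi_{m-1}(S^{d-1})=0$ precisely when $\K=\R,\ m\geq 2$ or $\K=\C,\ m>2$, which forces $p_*[\widetilde f_i]=[f_i]$.

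To fill in the table I would exploit diagram~\ref{equ:16} once more: by naturality of the tangent-bundle boundary map one has $\partial_N\circ p_*=\partial_\K$, so $\partial_N[f'_i]=\partial_\K[\widetilde f_i]$ and $E\circ\partial_N[f'_i]$ is obtained by suspending. Combined with Theorem~\ref{thm:9} (loose by small deformation $\Leftrightarrow\partial_N=0$, and $N^\#(f,f)=0\Leftrightarrow E\circ\partial_N=0$), with Proposition~\ref{prop:1} and with Theorem~\ref{thm:2}, every self-coincidence entry $(f,f)$ is governed by the pair $\big(\partial_\K[\widetilde f],\,E\circ\partial_\K[\widetilde f]\big)$, which cleanly separates the rows ``$\partial_N=0$'', ``$\partial_N\neq 0=E\partial_N$'' and ``$E\partial_N\neq 0$''. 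For the genuinely distinct case $[f_1]\neq[f_2]$ I would split on $\K$: when $\K=\R$ the target $\R P(n)=S^n/\Z_2$ is a spherical space form, so Theorems~\ref{thm:10} and~\ref{thm:11} apply verbatim and supply both $MCC=N^\#$ and the exceptional Kervaire/Hopf phenomena of Theorems~\ref{thm:13} and~\ref{thm:14}; when $\K=\C,\H$ the target is simply connected, the Reidemeister number $\#\pi_0(E(f_1,f_2))$ equals $1$, and I would transport the problem to $S^{n+d-1}$ through the lifts, reading off $N^\#,MCC,MC$ from the sphere-to-sphere Theorem~\ref{thm:7} via the degree homomorphism $\deg^\#$ and the Hopf--Ganea invariants of section~\ref{sec:1}. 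Matching the resulting vanishing patterns with the finiteness criterion of Theorem~\ref{thm:1}(iv) produces exactly the seven mutually exclusive rows; along the way one must check that the correction term $[f_i]-[f'_i]\in\operatorname{im}i_*$, being carried by the lower skeleton $\K P(n'-1)$, leaves $N^\#,MCC$ and $MC$ unchanged.

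The hard part will be this last step for $\K=\C$ and $\H$, where $p$ is a genuine fibre bundle rather than a covering, so coincidence homotopies cannot simply be lifted; one must instead control how $MCC,N^\#$ and $MC$, together with the stabilized Hopf--James/Ganea invariants, behave under $p_*$ and under $\partial_N=\partial_\K$. The most delicate rows are the nonstable ones: exactly as in Theorems~\ref{thm:13} and~\ref{thm:14}, whether a Whitehead product lying in $\ker E$ can be halved forces an appeal to the EHP-sequence and brings in the Kervaire invariant and the divisibility of the Hopf invariant by $4$. Verifying that these exceptional classes land in the correct rows, with mutually consistent values of all three invariants, is where the real work of the proof (carried out in detail in section~6 of~\cite{ko7}) resides.
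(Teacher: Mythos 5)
First, a point of calibration: the paper itself contains no proof of theorem \ref{thm:3} --- it states the result and refers to section~6 of \cite{ko7}, and the only argument actually carried out in the text is the commutativity \ $\partial_{\K} = \partial_N \scirc p_{\dis *}$ \ of the triangle in diagram \ref{equ:16} (via the observation that \ $p$ \ pulls \ $STN$ \ back to \ $V_{n'+1,2}(\K)$) together with its consequences. Measured against that, your first paragraph is a genuine contribution: the existence and uniqueness of \ $[\widetilde f_i]$ \ is proved correctly and completely. The splitting \ $\pi_m(\K P(n')) = \operatorname{im} p_{\dis *} \oplus \operatorname{im} i_{\dis *}$ \ does follow from the two fibrations with fibre \ $S^{d-1}$ \ (whose fibre inclusions are nullhomotopic since \ $d < n = dn'$), the naturality identity \ $\partial_p \scirc i_{\dis *} = \partial_{p''}$, and the fact that \ $i \scirc p''$ \ is the attaching map of the top cell of \ $\K P(n')$ \ and hence nullhomotopic there; this also yields \ $\operatorname{im} i_{\dis *} \cong \pi_{m-1}(S^{d-1})$ \ exactly as the parenthetical in the theorem asserts. (One quibble: your ``precisely'' is too strong, since \ $\pi_{m-1}(S^{d-1})$ \ also vanishes for \ $\K = \H$, $m \leq 3$; but the theorem only claims genuine liftings for \ $\R$ \ and for \ $\C$, $m>2$, so nothing is lost.) Your identification \ $\partial_N \scirc p_{\dis *} = \partial_{\K}$ \ by naturality is likewise exactly the paper's argument.

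For the table, however, your text is a plan rather than a proof, and the two steps you defer are precisely the load-bearing ones. (1) The table is stated for \ $(f_1,f_2)$ \ but every condition is on \ $(\widetilde f_1, \widetilde f_2)$ \ and \ $(f_1', f_2')$; without the claim that discarding the correction term in \ $\operatorname{im} i_{\dis *} \cong \pi_{m-1}(S^{d-1})$ \ changes none of \ $N^\#$, $MCC$, $MC$, the seven rows say nothing about the original pair --- you flag this, but offer no argument, and it is not routine (for \ $\K = \H$, and for \ $\C$ with $m=2$, the correction classes are nonzero). (2) The values of \ $MC$ \ for \ $\K = \C, \H$ \ --- in particular \ $MC = MCC$ \ in rows 2, 3, 4, 6 versus \ $MC = \infty$ \ in rows 5 and 7, and also \ $N^\# = 1$ \ in row 7 --- cannot be ``read off from theorem \ref{thm:7} through the lifts'': as you yourself note, \ $p$ \ has positive-dimensional fibres, so coincidences and removing homotopies do not transport along \ $p$, and theorem \ref{thm:1}(iv) with Reidemeister number 1 only yields \ $MC \in \{0,1,\infty\}$ \ without deciding between these. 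Your reduction of the selfcoincidence rows to theorem \ref{thm:9} (using the (ii)$\Leftrightarrow$(iii) equivalence for \ $\K P(n')$, $n' \geq 2$) and of the \ $\K = \R$ \ rows to theorems \ref{thm:10} and \ref{thm:11} is sound and matches the architecture of the quoted source; but as submitted, the proposal establishes the lifting statement and the row structure, not the entries of table \ref{tab:1} --- for those it, like the paper, ultimately points to section~6 of \cite{ko7} (and the \ $\omega^\#$/$\deg^\#$ machinery of \cite{ko6}) rather than proving them.
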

		\renewcommand{\arraystretch}{1.3}
		\begin{tabular}{l|c|c|c|} \hline
		{\footnotesize Condition } & $\!\!{\textstyle N^\# (f_1, f_2)}$\!\! & \!\!${\textstyle MCC (f_1, f_2)}$\!\! & \!\!${\textstyle MC (f_1, f_2)}$\!\! \\ \hline\hline
		\footnotesize 1) \ ${\textstyle f'_1 \, \sim \, f'_2,\ \ [\widetilde f_2] \ \in \ \ker \partial_{\mathbb K}}$  & \footnotesize 0 & \footnotesize0 & \footnotesize 0 \\ \hline
		\footnotesize 2) \ ${\textstyle f'_1 \, \sim \, f'_2,\ \ [\widetilde f_2] \ \in \ \ker E \scirc \partial_{\mathbb K} -  \ker \partial_{\mathbb K}}$ & \footnotesize 0 & \footnotesize 1 & \footnotesize 1 \\ \hline
		\footnotesize 3) \ ${\textstyle \mathbb K \, = \, \mathbb R, \ f'_1 \sim f'_2, \ \ \widetilde f_2 \not\sim a \scirc \widetilde f_2}$ & \footnotesize 1 & \footnotesize 1 & \footnotesize 1 \\ \hline
		\footnotesize 4) \ ${\textstyle  \mathbb K  =  \mathbb R,} \ {\textstyle f'_1 \not\sim f'_2, \ \ [\widetilde f_1] - [\widetilde f_2]}  \in  {\textstyle E}\, {\textstyle (\pi_{m -1} (S^{n -1}))} $ \!\!\!\!& \footnotesize 2 & \footnotesize 2 & \footnotesize 2 \\ \hline
		\footnotesize 5) \ ${\textstyle \mathbb K \, = \, \mathbb R,  \ [\widetilde f_1] - [\widetilde f_2] \ \not\in \ E (\pi_{m -1} (S^{n -1}))}$ & \footnotesize 2 & \footnotesize 2 & $\infty$ \\ \hline
		\footnotesize 6) \ ${\textstyle \mathbb K \, = \, \mathbb C \, \text{or} \, \mathbb H, \ [\widetilde f_1] \, = \, [\widetilde f_2] \ \not\in \ \ker E \scirc \partial_{\mathbb K}}$ & \footnotesize 1 & \footnotesize 1 & \footnotesize 1 \\ \hline
		\footnotesize 7) \ ${\textstyle \mathbb K \, = \, \mathbb C \, \text{or} \, \mathbb H, \ [\widetilde f_1] \ne [\widetilde f_2]}$ & \footnotesize 1 & \footnotesize 1 & $\infty$ \\ \hline
		\end{tabular}

		\begin{tble} \label{tab:1}
		Nielsen and minimum coincidence numbers of all pairs of maps \ $f_1, f_2 : S^m \to \mathbb K P (n'), \ m, n' \ge 2$: replace each (possibly base point free) homotopy class \ $[f_i]$ \ by a base point preserving representative and read off the values of \ $N^\#$ \ and \ $M (C)C$. (Here \ $f'_1 \sim f'_2$ \ means that \ $f'_1, f'_2$ \ are homotopic in the basepoint free sense; \ $a$ \ denotes the antipodal map).
		\end{tble}
	\end{exa}
	\medskip

	We conclude also that the triangle in diagram \ref{equ:16} commutes, i.\,e. \ $\partial_{\K} = \partial_N \scr \circ \dis p_{\dis *}$. \ Indeed, compare the exact sequences in \ref{equ:13} and \ref{equ:16} and note that the projection \ $p \,\colon\, S^{n+d-1} \, \to \, N = \K P(n')$ \ pulls the tangent sphere bundle STN over \ $N$ \ back to \ $V_{n'+1,2} (\K)$. \ Moreover the tangent bundle over \ $N$, \ when restricted to the lower dimensional submanifold \ $\K P (n'-1)$, \ allows a nowhere vanishing section. Therefore \ $\partial_N ( \pi_m (\K P(n'))) \, = \, \partial_{\K} (\pi_m (S^{n+d-1}))$ \ and
	\begin{align*}
		[f] \in \ker \partial_N & \Leftrightarrow [\tilde f] \in \ker \partial_{\K}, \\
		[f] \in \ker E \scr \circ \dis \partial_N & \Leftrightarrow [\tilde f] \in \ker E \scr \circ \dis \partial_{\K}
	\end{align*}
	for all \ $[f] \in \pi_m (\K P (n')),$ \ $m \geq 2$. \ Thus we may express table \ref{tab:1} and the Wecken condition \ref{def:3} entirely in terms of \ $\partial_{\K P(n')}$ \ or of \ $\partial_{\K}$, \ as we choose.

	If \ $n'$ \ is odd, then we can multiply the elements of \ $\K^{n'+1}$ \ on the left with the element \ $(0,1)$ \ of the division algebra \ $\K \times \K$ \ (of complex, quaternionic or octonic numbers, resp.). The resulting map
	\begin{align*}
		s \; \colon \; S^{d (n' +1)-1} \; &\longrightarrow \; \K^{n'+1} , \\
		x = (x_1, x_2; x_3, x_4 ; \dots ; x_{n'}, x_{n'+1}) \; &\longrightarrow \; (-\overline{x_2}, \overline{x_1}; - \overline{x_4}, \overline{x_3}; \dots) \not\in \K \cdot x,
	\end{align*}
	yields a section of \ $p_{\K}$. \ Therefore \ $\partial_{\K}$ \ and \ $\partial_{\K P(n')}$ \ vanish identically. Proposition \ref{prop:1} follows. \hfill $\Box$\medskip


	\begin{exa}{\bf maps between tori}
		(compare the proof of theorem \ref{thm:8})

		For all \ $m,n\geq1$ \ and all maps \ $f_1,f_2\colon T^m\to T^n$ \ the minimum
		number \ $MCC(f_1,f_2)$ \ agrees with \ $\vert\! \det(u_1,\dots u_n)\vert$ \ (cf.
		\ref{thm:5}) and all four Nielsen numbers. It is even equal to the Nielsen number
		$\tilde N^{\Z_2}(f_1,f_2)$ which counts those Nielsen classes which contribute
		nontrivially to the \ $mod\ 2$ \ homology class
		\begin{equation*}
			\mu _2(\tilde\omega (f_1,f_2))=\tilde g_{\dis *}([C(f_1,f_2)]_2)\in H_{m-n}(E(f_1,f_2);\Z_2)
		\end{equation*}
		(compare definition \ref{def:2} and diagram \ref{dia:1}).
		\hfill $\Box$
	\end{exa}
	\bigskip

	After all these special examples let us look for results which apply to all
	manifolds $M$ and $N$ (at least in a certain `stable' dimension range). The
	following is a generalisation of Wecken's original result.

	\begin{thm}
		(cf. \cite{ko4}, 1.10).

		Assume $m<2n-2$.

		Then for all maps $f_1,f_2\colon M^m\to N^n$ (as in 1.36)
		we have
		\begin{equation*}
			MCC(f_1,f_2)=N^\#(f_1,f_2)=\tilde N(f_1,f_2).
		\end{equation*}
		\end{thm}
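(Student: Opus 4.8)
The plan is to lean on the inequalities already furnished by theorem \ref{thm:1}(iii), namely $MCC(f_1,f_2) \ge N^\#(f_1,f_2) \ge \tilde N(f_1,f_2)$, and to reduce the entire assertion to the single reverse inequality $MCC(f_1,f_2) \le \tilde N(f_1,f_2)$: once this holds, the chain $\tilde N \le N^\# \le MCC \le \tilde N$ forces the three numbers to coincide. The invariant to work with is therefore the \emph{stabilized} looseness obstruction $\tilde\omega(f_1,f_2) \in \Omega_{m-n}(E(f_1,f_2);\tilde\varphi)$ of diagram \ref{dia:1}, which splits as a direct sum $\bigoplus_Q \tilde\omega\vert_{C_Q}$ indexed by the path components $Q \in \pi_0(E(f_1,f_2))$; by definition \ref{def:2} the number $\tilde N(f_1,f_2)$ is exactly the number of components $Q$ whose summand is nonzero.

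Next I would realize $\tilde N$ geometrically. Starting from a generic representative of $(f_1,f_2)$, transverse to the diagonal $\Delta$ as in section \ref{sec:2}, one obtains the closed $(m-n)$-manifold $C = C(f_1,f_2)$ together with its Nielsen decomposition into the pieces $C_Q = \tilde g^{-1}(Q)$. Two geometric moves then finish the argument. First, removal of the stably inessential classes: if $\tilde\omega\vert_{C_Q} = 0$, then the normal-bordism datum $(C_Q, \tilde g\vert_{C_Q}, \overline g\vert_{C_Q})$ bounds, i.e.\ there is an abstract normal bordism $W$ running from $C_Q$ to the empty manifold; embedding $W$ into $M \times [0,1]$ and reading it, via a tubular neighbourhood and the Pontryagin--Thom procedure applied to $(f_1,f_2)$, as the coincidence track of a homotopy, one pushes the coincidence points of $C_Q$ apart and cancels them. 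Second, connecting: after the first move only the $\tilde N$ essential classes survive, but a single $C_Q$ may still be disconnected; since its components are Nielsen equivalent there are connecting paths $c$ in $M$ along which $f_1 \circ c$ and $f_2 \circ c$ are homotopic rel endpoints, and a finger (embedded surgery) move joins them into one path component without altering the $\tilde\omega$-contribution. The resulting pair has precisely $\tilde N$ coincidence components, so $MCC \le \tilde N$.

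Everything here rests on being able to replace \emph{abstract} bordisms and bundle data by \emph{embedded} ones inside $M$ (respectively $M \times [0,1]$) and to perform the cancellations in codimension $n$, and this is exactly where the hypothesis enters. When $m < 2n-2$ the manifolds $C_Q$ of dimension $m-n$ and the bordisms $W$ of dimension $m-n+1$ embed, uniquely up to isotopy, in $M$ and $M \times [0,1]$ by the Haefliger metastable embedding theorem, and the stabilization $O(n) \to O$ is a homotopy isomorphism through the relevant dimension, so that the unstable normal-bundle structure carried by $\omega^\#$ and the stable one carried by $\tilde\omega$ record the same information; the latter remark applied class-by-class also makes the map $\stab$ of diagram \ref{dia:1} a bijection and yields $N^\# = \tilde N$ on the nose. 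The genuine difficulty, and the step I expect to absorb all the effort, is the first move: converting the purely algebraic vanishing of $\tilde\omega\vert_{C_Q}$ into an explicit coincidence-removing homotopy, with the Whitney trick supplying the pairwise cancellations. Checking that the bound $m < 2n-2$ is precisely what the embedding theorem and the Whitney trick require, and carrying out this realization in detail, is the content of \cite{ko4} (1.10) and \cite{ko6}, whose arguments I would follow.
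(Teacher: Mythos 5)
Your proposal takes essentially the same route as the paper, which itself defers the details to section 4 of \cite{ko4}: reduce everything to $MCC\le\tilde N$ via the sandwich in theorem \ref{thm:1}(iii), then use the hypothesis $m<2n-2$ to realize nullbordisms of the stably inessential classes $(C_Q,\tilde g\vert,\overline g\vert)$ as embedded coincidence tracks of homotopies (the path data $\theta$ carried by the extension of $\tilde g$ over the bordism being exactly the ``crucial ingredient'' the paper highlights) and to merge the components of each surviving essential class along Nielsen paths. Your accounting of where the bound enters --- general position/Haefliger embedding of the $(m-n+1)$-dimensional bordisms in $M\times[0,1]$, plus stabilization making $\stab$ bijective so that $N^\#=\tilde N$ --- agrees with the paper's gloss and with \cite{ko4}, to which you correctly defer the technical realization.
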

		The proof, given in section 4 of \cite{ko4}, emphasizes the role of the paths
		$\theta$ which occur in \ $E(f_1,f_2)$ \ and in any partial nullbordism of
		\ $(C,\tilde g,\overline{g})$ \ (cf. \ref{equ:12} and 2.7):
		they are a crucial
		ingredient in the construction of homotopies which eliminate inessential
		Nielsen coincidence classes.

	\begin{cor}
		(cf. also \cite{ko6}, 3.3).

		If \ $m=n\not=2$, then
		\begin{equation*}
			MC(f_1,f_2)=MCC(f_1,f_2)=N^\# (f_1,f_2)=\tilde N(f_1,f_2).
		\end{equation*}
		Both minimum numbers agree also with \ $N(f_1,f_2)$ and $N^\Z (f_1,f_2)$ \ under
		the additional assumption that \ $M$ \ and \ $N$ \ are orientable or, more generally, that the vector bundles \ $TM$ \ and \ $f_1^{\dis *}(TN)$ \ over $M$ have the same orientation behavior (i.\,e. \ $w_1(M) \, = \, f_1^{\dis *}(w_1(N))$, \ compare 2.9 and 3.9).
	\end{cor}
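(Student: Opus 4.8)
The plan is to read the equidimensional statement off the generalised Wecken theorem stated just above and then to upgrade it on both ends. First I would split on the size of $n$. When $m=n\geq 3$ one has $m=n<2n-2$, so the preceding theorem applies verbatim and already yields $MCC(f_1,f_2)=N^\#(f_1,f_2)=\tilde N(f_1,f_2)$. The remaining case $m=n=1$ forces $M=N=S^1$, and there the corollary on maps into $S^1$ above (together with theorem \ref{thm:7}) gives $MC=MCC=N^\#=\tilde N=N=N^\Z$ directly, the orientation hypothesis being automatic; so from now on I may assume $n\geq 3$.

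Next I would prove $MC=MCC$. The inequality $MC\geq MCC$ is theorem \ref{thm:1}(iii). For the reverse I use that, since $m=n$, a generic representative of the pair has a $0$-dimensional, hence finite and discrete, coincidence locus (cf. the discussion of the case $m=n$ above), so that $\#C=\#\pi_0(C)$ for such maps. Starting from a generic pair I then invoke the classical Hopf--Wecken elimination: within each Nielsen class a Hopf coalescing move concentrates all of its coincidence points into a single point carrying the class index, while every inessential class is removed altogether by a Whitney-type argument. This last step is legitimate precisely because $n\neq 2$ (equivalently $m\geq 3$), and it produces a representative with exactly $N^\#(f_1,f_2)$ coincidence points. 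Hence $MC\leq N^\#=MCC$, so $MC=MCC=N^\#=\tilde N$.

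Finally I would establish the orientation refinement $\tilde N=N=N^\Z$. Put $\varphi=f_1^{\dis *}(TN)-TM$ as in 2.9; then $w_1(\varphi)=f_1^{\dis *}(w_1(N))-w_1(M)$, which vanishes exactly under the hypothesis $w_1(M)=f_1^{\dis *}(w_1(N))$, in particular when $M$ and $N$ are orientable. In that case $\varphi$, and with it $\tilde\varphi=\pr^{\dis *}(\varphi)$, is orientable, so every pathcomponent $Q$ of $E(f_1,f_2)$ has $\tilde\varphi\mid Q$ orientable. Consulting the description of the case $m=n$ above, all summands are then copies of $\Z$: we have $\Omega_0(M;\varphi)=H_0(M;\tilde\Z_\varphi)=\Z$ and $\Omega_0(E(f_1,f_2);\tilde\varphi)=\bigoplus_Q\Z$. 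In degree $0$ the Hurewicz map $\mu$ is an isomorphism, and $\pr_{\dis *}$ sends the integral index of a single class $C_Q$ to the same integer in $\Omega_0(M;\varphi)=\Z$; hence a class $Q$ contributes nontrivially to $\tilde\omega$ iff it does so to $\omega=\pr_{\dis *}(\tilde\omega)$ iff it does so to $\mu(\omega)$. By definition \ref{def:2} this means $\tilde N(f_1,f_2)=N(f_1,f_2)=N^\Z(f_1,f_2)$, and combining with the previous steps all six numbers agree. (Without the hypothesis $\varphi$ may be non-orientable, $\Omega_0(M;\varphi)=\Z_2$, and an essential class of even index drops out of $N$, so that $\tilde N>N$ becomes possible.)

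The genuinely substantial ingredient is the upper bound in the second paragraph: the removal of inessential Nielsen classes and the coalescing of essential ones into isolated points. This is the classical hard part of Wecken theory, a Whitney trick available only when the target dimension satisfies $n\neq 2$, and it is exactly what the hypothesis $m=n\neq 2$ supplies. I would not reprove it here but cite the standard coincidence version (as in \cite{ko4} and \cite{ko6}, 3.3), the remainder being the orientation bookkeeping recorded above.
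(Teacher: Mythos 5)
Your proposal is correct and follows essentially the same route as the paper: the equalities $MCC=N^\#=\tilde N$ are read off from the preceding theorem (for $m=n\geq 3$ one has $m<2n-2$), the upgrade to $MC$ rests on the classical equidimensional Hopf--Wecken elimination which the paper likewise does not reprove but cites (\cite{ko6}, 3.3), and the orientation refinement $\tilde N=N=N^\Z$ is exactly the bookkeeping of the $m=n$ example in section~\ref{sec:3}, where $w_1(\varphi)=f_1^{\dis *}(w_1(N))-w_1(M)=0$ forces every summand of $\Omega_0(E(f_1,f_2);\tilde\varphi)$ to be $\Z$, the projection $\pr_{\dis *}$ to be injective on each summand, and $\mu$ to be an isomorphism in degree $0$. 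One small slip: since $N$ need not be compact, the case $m=n=1$ does not force $N=S^1$ (also $N=\R$ is possible), but there every pair is loose and all six numbers vanish, so your argument is unaffected.
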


\section{Selfcoincidences} \label{sec:5}

In this section we discuss our coincidence invariants in the special case where \ $f_1 = f_2 \eqqcolon f \, \colon \, M \to N$. Here we are faced with the unusual situation that the fiber map \ $\pr$ \ in diagram \ref{equ:12} allows the global section \ $s \, \colon \, M \to E (f,f)$ \ defined by
\begin{equation*}
	s(x) \; \coloneqq \; (x, \text{ constant map in } N \text{ at } f(x) ), \quad x \in M.
\end{equation*}
It lifts the inclusion \ $g \, \colon \, C (f_1,f_2) \, \subset \, M$ to yield the coincidence datum \ $\tilde g = s \scr \circ \dis g$ \ (compare \ref{equ:12}). Therefore this lifting contains none of the rich extra information it can capture in the general case when \ $f_1 \not\sim f_2$. \ E.\,g. \ $\omega (f,f)$ \ is just as strong as \ $\tilde \omega (f,f)$ \ (cf. \ref{dia:1}) \ and \ $N (f,f) = \tilde N (f,f)$.
Moreover only the pathcomponent \ $Q_0 \in \pi_0 (E (f,f))$ \ which contains \ $s(M)$ \ can possibly contribute nontrivially to the $\omega$-invariants (cf. definition \ref{def:2}). Thus \ $MCC (f,f)$ \ and the Nielsen numbers of \ $(f,f)$ \ can assume only the values 0 and 1.
Actually the same holds for \ $MC (f,f)$ \ when \ $M = S^m$ \ or in the setting of theorem \ref{thm:6}.

\begin{prop}\label{prop:4}
	Let \ $f \, \colon \, V_{r,k} \, \to \, \overset{(\sim)}{G}_{r,k}$ be the canonical projection (as in \ref{thm:6}) and assume also \ $r \geq 2k \geq 2$.

	If \ $N (f,f) \, = \, 0$ \ then \ $(f,f)$ \ is loose by small deformation.
\end{prop}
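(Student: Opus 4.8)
The plan is to reduce the assertion to the existence of a nowhere vanishing section and then exploit the fact that the hypothesis $r\ge 2k$ places us in a stable dimension range. Recall (cf.\ \cite{dg}, 2.13, or \cite{ko7}, 5.3) that $(f,f)$ is loose by small deformation exactly when the pulled-back tangent bundle $f^{*}(TN)$ admits a nowhere vanishing section over $M=V_{r,k}$. Since the canonical frame over the Stiefel manifold trivialises the pullback of the tautological $k$-plane bundle, one computes $f^{*}(TN)\cong\xi^{\oplus k}$, where $\xi$ is the pullback of the complementary bundle and $\operatorname{rank}\xi=r-k$; in particular $f^{*}(TN)$ has rank $n$, and a generic section vanishes along a submanifold $Z$ of dimension $m-n=k(k-1)/2$ (the very exponent appearing in \ref{thm:6}). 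Because $C(f,f)=M$ is connected, only the path component $Q_{0}\in\pi_{0}(E(f,f))$ containing $s(M)$ can contribute, so by Definition~\ref{def:2} and Diagram~\ref{dia:1} the hypothesis $N(f,f)=0$ is equivalent to the vanishing of the normal-bordism class $\omega(f,f)=[Z,g,\overline g]\in\Omega_{m-n}(M;\varphi)$, with $\varphi=f^{*}(TN)-TM$.

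The decisive numerical observation is that $r\ge 2k$ together with $k\ge 2$ forces the strict stable range $m<2n-2$. Using $m=rk-\tfrac{k(k+1)}{2}$ and $n=k(r-k)$ one finds
\[
	(2n-2)-m \;=\; rk-2k^{2}+\tfrac{k(k+1)}{2}-2,
\]
an expression increasing in $r$ which already equals $\tfrac{k(k+1)}{2}-2\ge 1$ at $r=2k$. Hence $m<2n-2$ for all admissible $r,k$; the border case $k=1$ is codimension zero, where the Euler class is already a complete obstruction and the statement is classical.

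In this range the stabilisation homomorphisms of Diagram~\ref{dia:1} are isomorphisms (cf.\ \cite{ko4}), so the vanishing of $\omega(f,f)$ -- i.e.\ of $N(f,f)$ -- is equivalent to the vanishing of the sharp invariant $\omega^{\#}(f,f)$, hence to $N^{\#}(f,f)=0$. (Alternatively one may invoke the generalisation of Wecken's theorem in Section~\ref{sec:4}, cf.\ \cite{ko4}, 1.10, which gives $N^{\#}=\tilde N$, together with the selfcoincidence identity $\tilde N(f,f)=N(f,f)$ recorded at the beginning of this section.) It then remains to promote $\omega^{\#}(f,f)=0$ to an honest nowhere vanishing section of $f^{*}(TN)$: I would realise a normal nullbordism $W^{m-n+1}$ of the generic zero set $Z$ as an embedding into $M\times[0,1]$ -- possible by general position, since $2(m-n+1)\le m+1$ in our range -- and use the coefficient data $\varphi$ to extend the section across $W$, thereby deleting $Z$ altogether. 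By the characterisation recalled in the first paragraph, $(f,f)$ is then loose by small deformation.

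The genuine difficulty is concentrated entirely in this last promotion. In general the vanishing of the stabilised obstruction $\omega^{\#}(f,f)$ (equivalently $N^{\#}(f,f)=0$) is strictly weaker than loose by small deformation: the discrepancy is precisely the ``one desuspension'' defect encoded in the Wecken condition~\ref{def:3} and visible in the hierarchy of Theorem~\ref{thm:9}, where (ii) generally fails to follow from the lower conditions. What must be verified with care is that the strict inequality $m<2n-2$ forces the governing suspension to be injective, so that this defect cannot survive and the primary normal-bordism obstruction becomes complete; I would extract this completeness from the discussion in \cite{ko7}, 5.6--5.10, and \cite{ko4}. Once it is in hand, the chain $N(f,f)=0\Rightarrow\omega^{\#}(f,f)=0\Rightarrow f^{*}(TN)$ admits a nowhere vanishing section closes, and the proposition follows.
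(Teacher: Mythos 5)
Your argument is essentially correct, but it reaches the conclusion by a genuinely different route than the paper. The paper's proof is concretely geometric: it pulls back a vector field \(v\) on \(\overset{(\sim)}{G}_{r,k}\) having a \emph{single} zero of index \(2\chi(G_{r,k})\), so that the section \(v\scr\circ\dis f\) of \(f^{\dis *}(TN)\) vanishes exactly on the fiber \(SO(k)\); it then uses the hypothesis \(r\geq 2k\) to rotate the frame vectors into \(e_{k+1},\dots,e_{2k}\) and thereby compress this zero set into a ball \(B\subset V_{r,k}\). This reduces the whole problem to a single local index \(i(v\scr\circ\dis f,B)\in[S^{m-1},S^{n-1}]\), which is by construction the \emph{complete} obstruction to removing the zeros by a modification inside \(B\), and the proof closes by identifying \(\omega(f,f)=\pm E^\infty(i(v\scr\circ\dis f,B))\) and invoking injectivity of \(E^\infty\) on \(\pi_{m-1}(S^{n-1})\) in the stable range \(r\geq 2k\geq 4\) (with \(k=1\), i.e. \(f=\operatorname{id}_{S^{r-1}}\), handled classically, as you also do). You bypass the ball-compression entirely and instead invoke the general metastable completeness of the normal bordism obstruction: for a generic section of an \(n\)-plane bundle over \(M^m\) with \(m\leq 2n-3\), the class \(\omega=[Z,g,\overline g]\in\Omega_{m-n}(M;\varphi)\) vanishes if and only if a nowhere vanishing section exists, proved by embedding a nullbordism \(W^{m-n+1}\) in \(M\times[0,1]\), destabilizing the bundle data over \(W\) (which needs \(n>m-n+1\), amply satisfied), and extending the section -- exactly the surgery sketch you give. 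Your numerical verification that \(r\geq 2k\), \(k\geq 2\) forces \(m\leq 2n-3\) matches the paper's stability claim. Two remarks. First, your citation for the completeness step is off target: \cite{ko7}, 5.6--5.10 concerns the case \(M=S^m\) (where the obstruction reduces to \(\partial_N[f]\)) and does not apply to \(M=V_{r,k}\); the correct reference for the metastable completeness theorem you need is the singularity approach of \cite{ko1}. Second, your detour through \(\omega^\#(f,f)\) (which, incidentally, is the \emph{non}stabilized invariant, not the ``stabilised'' one) is unnecessary: your surgery argument consumes a nullbordism of \(\omega(f,f)\in\Omega_{m-n}(M;\varphi)\) directly, which is exactly what \(N(f,f)=0\) provides, since \(C(f,f)=M\) is connected and only the component \(Q_0\) can contribute. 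As for what each approach buys: yours is more modular and works for any target of \(f\) in the metastable range, at the cost of importing the full strength of \cite{ko1}; the paper's compression argument is self-contained, simultaneously yields \(MC(f,f)\leq 1\), and produces the explicit framed representative \(2\chi(G_{r,k})\) copies of invariantly framed \(SO(k)\), i.e. the identification \(\omega(f,f)=\pm E^\infty\!\lt 2\chi(G_{r,k})[SO(k)]\rt\in\pi^S_{k(k-1)/2}\), which is precisely what the computation in theorem \ref{thm:6} requires and which your route does not deliver.
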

\bigskip

\noindent
\textbf{Proof of \ref{thm:6} and \ref{prop:4}.}
	The Grassmannian \ $\tilde G_{r,k}$ \ of oriented $k$-planes in \ $\R^r$ \ allows a tangent vector field \ $v$ \ having a single zero with index \ $\chi (\tilde G_{r,k}) = 2 \chi (G_{r,k})$ \ at the point \ $y_0 = [\R^k \subset \R^r]$ \ in \ $\tilde G_{r,k}$. \
	The corresponding section \ $v \scr \circ \dis f$ \ of the pullback \ $f^{\dis *} (T \tilde G_{r,k})$ \ vanishes only in the fiber \ $f^{-1} ( \{y_0 \}) \, = \, SO(k)$. \
	Since \ $r \geq 2k$ \ by assumption, we can rotate the vectors \ $v_1, \dots, v_k$ \ of a $k$-frame in \ $\R^k$ \ to (points near) the standard basis vectors \ $e_{k+1}, \dots, e_{2k}$ \ in \ $\R^r$. \
	This yields an isotopy which deforms the inclusion map \ $g \, \colon \, SO(k) \, \subset \, V_{r,k}$ \ (compare \ref{equ:12}) into a small neighbourhood of the point \ $y_0 \, = \, (e_{k+1}, \dots , e_{2k})$ \ in \ $V_{r,k}$. \
	Conversely there is a ball \ $B$ \ in \ $V_{r,k}$ \ which contains the zero set \ $SO(k)$ \ of \ $v \scr \circ \dis f$. \
	Actually after a suitable modification in \ $B$ \ this section vanishes in at most one point.
	Thus \ $MC (f,f) \leq 1$. \
	Moreover, if we choose a trivialization of the restricted vector bundle \ $f^{\dis *} (T\tilde G_{r,k}) \vert B$ \ we obtain the `index`
	\begin{equation*}
		i (v \scr \circ \dis f, B) \; \coloneqq \; \left[ \lt v \scr \circ \dis f \vert \partial B\rt / \| v \scr \circ \dis f \vert \partial B \| \right] \; \in \; \left[ \partial B, S^{n-1} \right] \; \approx \; \left[ S^{m-1}, S^{n-1} \right],
	\end{equation*}
	i.\,e. the local obstruction to removing these singularities.

	Let us compare this index with the $\omega$-invariants of \ $(f,f)$. \
	A fixed choice of an oriention of \ $SO(k)$ \ equips this Lie group with a left invariant framing; it also yields a (stable) trivialization of the tangent bundle along the fibers of \ $f$, \ and hence of the coefficient bundle  \ $\varphi \, = \, f^{\dis *} (TN) - TM$ \ (cf. 2.9 and diagram \ref{dia:1}).
	Thus \ $\omega (f,f)$ \ lies in the \textit{framed} bordism group
	\begin{equation*}
		\Omega_{m-n}^{\operatorname{fr}} \lt V_{r,k} \rt \;\; = \;\; \Omega_{m-n}^{\operatorname{fr}} \, \oplus \, \tilde \Omega_{m-n}^{\operatorname{fr}} \lt V_{r,k} \rt
	\end{equation*}
	where
	\begin{align*}
		m \;\; = \;\; \dim V_{r,k}                   & \;\; = \;\; k(r-k) + k (k-1)/2 & \text{and} \\
		n \;\; = \;\; \dim \overset{(\sim)}{G}_{r,k} & \;\; = \;\; k(r-k).
	\end{align*}
	We may represent this $\omega$-invariant by a generic zero manifold of \ $v \scr \circ \dis f$ \ in the ball \ $B$, \ e.\,g. by \ $\chi \lt \tilde G_{r,k} \rt$ \ many copies of \ $SO(k)$. \
	Hence \ $\omega (f,f)$ \ lies in \ $\Omega_{m-n}^{\operatorname{fr}} \, \oplus \, \{0\} \, \cong \, \pi_{m-n}^S$ \ and is equal to \ $\pm E^\infty \lt i \lt v \scr \circ \dis f, B \rt \rt$ \ (compare \cite{ko7}, 5.7).
	If \ $r \geq 2k \geq 4$ \ we are in the stable dimension range and \ $N (f,f)$ \ or, equivalently, \ $\omega (f,f)$ \ vanishes precisely if \ $i (v \scr \circ \dis f, B)$ \ does (and hence \ $(f,f)$ \ is loose by small deformation).
	On the other hand, if \ $k=1$ \ then \ $f$ \ is the identity map on \ $S^{r-1}$ \ and \ $i(v \scr \circ \dis f,B)$ \ as well as $N (f,f)$ \ vanish precisely when \ $r$ \ is even.

	Similar arguments apply to the canonical projection \ $f$ \ into the Grassmannian \ $G_{r,k}$ \ of \textit{unoriented} $k$-planes in \ $\R^r$.
	\hfill $\Box$
\medskip

\noindent
\textbf{Proof of theorem \ref{thm:9}.} Assume \ $M = S^m$ \ and \ $[f] \in \pi_m (N)$, $m,n \geq 2$.

Then \ $\omega^\# (f,f)$ \ is just as strong as
\begin{equation*}
	\pr_{\dis *} (\omega^\# (f,f)) \; = \; \pm E \scr \circ \dis \partial_N ([f]) \; \in \; \pi_m (S^n)
\end{equation*}
(cf. \cite{ko7}, 5.6 and 5.7). In other words, conditions (iv) and (v) in \ref{thm:9} are equivalent.

If \ $N = S^n/G,$ \ $G \not\cong \Z_2$, \ then \ $N = S^n$ \ (and hence \ $MCC (f,f) = N^\# (f,f)$, cf. \ref{thm:7}c) or \ $n$ \ is odd (and hence \ $N$ \ allows a nowhere vanishing vectorfield and \ $MCC (f,f) = N^\# (f,f) = 0$); indeed, \ $(\# G) \cdot \chi (N) = \chi (S^n) = 2$ \ and \ $\# G \leq 2$ \ for even \ $n$. In both cases conditions (iii) and (iv) in \ref{thm:9} are equivalent.

In general, when a manifold \ $N$ \ is noncompact or has zero Euler characteristic, then it carries also a nowhere vanishing vectorfield and \ $\partial_N (\pi_m(N)) = 0$. If \ $m < 2n-2$, or if \ $m \leq n+4 $ \ and \ $n \equiv 0(2)$, \ then \ $\ker E = 0$ \ except when \ $m = n +4 = 10$. This is seen with the help of the EHP-sequence and Toda's tables (cf. \cite{wh}, XII, 2.3, and \cite{to}). In both cases the Wecken condition \ref{def:3} holds for \ $(m,N)$, and assumptions (i), \dots, (v) in \ref{thm:9} are all equivalent.

In view of \ref{thm:2} this completes the proof of theorem \ref{thm:9}. \hfill $\Box$
\bigskip

\noindent
\textbf{Proof of theorem \ref{thm:2}.} Consider the composed diffeomorphism
\begin{equation*}
	TN \; \xleftarrow[p_{1 \dis *}]{\cong} \; \nu (\Delta , N \times N) \; \cong \; U \; \subset N \times N
\end{equation*}
where \ $\nu (\Delta, N \times N)$ \ and \ $U$, resp., are the normal bundle, and a tubular neighbourhood, resp., of the diagonal \ $\Delta$ \ in \ $N \times N$, and \ $p_{1 \dis *}$ \ denotes the vector bundle isomorphism determined by the first projection \ $p_1 \, \colon \, N \times N \to N$ \ (compare Figure 2.1). This restricts to yield a fiber map from the tangent sphere bundle \ $STN$ \ (cf. \ref{equ:13}) to the configuration space
\begin{equation*}
	\tilde C_2 (N) = \{(y_1, y_2) \in N \times N \; | \; y_1 \not= y_2 \} \; = \; N \times N - \Delta
\end{equation*}
(fibered over \ $N$ \ by \ $p_1$), and relates the two corresponding exact homotopy sequences (cf. the commuting diagram 5.4 in \cite{ko7}). We conclude that the boundary homomorphism in the homotopy sequence of ($\tilde C_2(N), p_1|$) equals \ $j_{N \dis *} \scr \circ \dis \partial_N$. Therefore \ $f$ \ lifts to yield a map \ $(f,f') \, \colon \, S^m \to \tilde C_2 (N)$ (i.\,e. \ $f(x) \not= f'(x)$ \ for all \ $x \in S^m$) if and only if \ $j_{N \dis *} (\partial_N (f)) = 0$. This is equivalent to conditions (i) and (ii) (cf. \ref{thm:9}) when \ $j_{N \dis *}$ \ is injective.

Next assume that \ $\pi_1(N) \not= 0$ \ and \ $m \geq 2$. \ Then the fiber \ $p^{-1}(\{x_0\})$ \ in the universal covering space \ $p \, \colon \, \tilde N \to N$ \ has at least two points \ $y_0, y_0'$. \ Moreover \ $j_N$ \ lifts to yield a composite
\begin{equation*}
	S^{m-1} \xrightarrow{\tilde j_N} \tilde N - p^{-1}(\{x_0\}) \xrightarrow{\incl} \tilde N - \{y_0,y_0'\}
\end{equation*}
which maps \ $S^{m-1}$ \ homeomorphically onto the boundary sphere \ $\partial B'$ \ of a small ball \ $B'$ \ around \ $y_0'$ \ in \ $\tilde N - \{y_0\}$. \ Using an isotopy we can find a ball \ $B$ \ in \ $\tilde N$ \ around \ $y_0$ \ containing \ $B'$ \ and whose  boundary \ $\partial B$ \ meets \ $\partial B'$ \ in precisely one point. Then there exists a (deformation) retraction
\begin{equation*}
	r \; \colon \; \tilde N - \{ y_0, y_0' \} \; \to \; (\tilde N - \stackrel{\circ}{B}) \vee \partial B'
\end{equation*}
and an obvious quotient map \ $q$ \ to \ $\partial B'$ \ such that \ $q \, \scr \circ \dis \, r \, \scr \circ \dis \, \incl \, \scr \circ \dis \, \tilde j_N$ \ is a homeomorphism. Thus the induced homomorphisms \ $\tilde j_{N \dis *}$ \ and \ $j_{N \dis *}$ \ are injective.

Finally consider the case \ $N \, = \, \K P(n')$. \ Here \ $j_N$, \ when composed with the natural retraction from \ $\K P(n') - \{y_0\}$ \ to \ $\K P(n'-1)$, \ is homotopic to the standard projection \ $p \, \colon \, S^{dn' -1} \to \K P(n' -1)$ \ (compare 4.5). If \ $n' -1 \geq 1$, \ then \ $p_{\dis *}$ \ -- and hence \ $j_{N \dis *}$ \ -- is injective (compare theorem \ref{thm:3}). \hfill $\Box$
\section{Kervaire invariants and Hopf invariants} \label{sec:6}
\bigskip

	In this section we prove theorems \ref{thm:13} and \ref{thm:14} as well as their corollaries.

	Given a covering space \ $p \, \colon \, \tilde N \to N$, \ the tangent bundle \ $TN$ \ of \ $N$ \ pulls back to \ $p^{\dis *} (TN) \, \cong \, T \tilde N$. \ Therefore
	\begin{equation*}
		\partial_{\tilde N} \; = \; \partial_N \,\scr \circ \dis p_{\dis *} \; \colon \; \pi_m (\tilde N) \; \longrightarrow \; \pi_{m-1} (S^{n-1}), \qquad m \geq 1,
	\end{equation*}
	(compare \ref{equ:13}).

	In particular, if we want to check the Wecken property \ref{def:3} for a spherical space form \ $N = S^n/G$, \ we need to discuss diagram \ref{equ:13} only for the case \ $N \, = \, S^n$. \ Then \ $\partial_N$ \ is the boundary homomorphism in the exact homotopy sequence of the canonical fiber map from the Stiefel manifold \ $V_{n+1,2} \, = \, STS^n$ \ to \ $S^n$. \ For simplicity we write
	\begin{equation*}
		\partial \quad \coloneqq \quad \partial_{S^n} \quad = \quad \partial_{\R}
	\end{equation*}
	(compare \ref{equ:16}).

	When can the pair \ $(S^m, S^n/G)$ \ possibly fail to have the Wecken property \ref{def:3},
	i.\,e. when is the intersection \ $\partial(\pi_m(S^n)) \cap \ker E$ \ nontrivial? Clearly we have to look at nonstable dimension combinations
	where \ $m \geq 2n+2$ (since otherwise \ $\ker E = 0$). If \ $m \leq 3n-5$ \ we can approach our question via the exact EHP-sequence
	\addtocounter{thm}{1}
	\begin{equation}\label{equ:9}
		\xymatrix @C=22pt {
			\cdot \cdot \ar[r] &
			\pi_{m-1} (S^{2n-3}) \ar[r]^{w_{n-1 \dis *}} &
			\pi_{m-1} (S^{n-1})  \ar[r]^{E}       &
			\pi_m (S^n)          \ar[r]^{H} \ar@{-->}@/^1pc/[l]^{\partial} &
			\pi_m (S^{2n-1})     \ar[r]                                &
			\cdots
		}
	\end{equation}
	(cf. \cite{we}, XII, 2.3-2.5), where the homomorphism \ $w_{n-1 \dis *}$ \ is induced by the Whitehead square \ $w_{n-1} \coloneqq [\iota_{n-1}, \iota_{n-1}]$ \ (of the identity map on \ $S^{n-1}$).
	Its kernel as well as the homomorphism \ $H$ \ are described by the (second) Hopf-James invariant.

	Let us study the first nonstable dimension combination \ $m = 2n -2$.
	Here \ $E$ \ is onto, but rarely injective.
	If fact, if \ $n\not\equiv 0(2)$ the kernel of \ $E$ \ is even infinite (since the $\Z$-valued Hopf invariant vanishes on \ $\pi_{m+1} (S^n))$; but this has no effect on Wecken properties since \ $\partial (\pi_m(S^n))=0$ \ whenever $n$ is odd.
	In view of the famous Hopf invariant one theorem of Adams \ $\ker E = 0$ \ precisely for \ $n=2,4$ or 8.

	Thus we need to consider only the remaining case where \ $n\equiv 0(2)$, \ $n \not= 2,4,8$.
	We obtain the short exact sequence
	\stepcounter{thm}
	\begin{equation}
		\xymatrix{
			  0 \ar[r]
			& \Z_2 \ar[r]^-{\cdot w_{n-1}}
			& \pi_{2n-3} (S^{n-1}) \ar[r]^{E}
			& \pi_{2n-2} (S^n) \ar[r] \ar@{-->}@/^1pc/[l]^{\partial}
			& 0 \ ;
		}
	\end{equation}
	\stepcounter{thm}%
	moreover, \ $\partial = 2 E^{-1}$ \ is obtained by taking inverse images and
	multiplying with 2. Thus the Wecken condition
	\begin{equation*}
		\partial (\pi_{2n-2} (S^n)) \; \cap \; \ker E \; \; = \; \; 2 \cdot \pi_{2n-3} (S^{n-1}) \; \cap \; \Z_2 \cdot w_{n-1} \;\; = \;\; 0
	\end{equation*}
	fails precisely if \ $w_{n-1} = [\iota_{n-1}, \iota_{n-1}]$ \ ``can
	be halved'', i.\,e. lies in \ $2\cdot \pi_{2n-3} (S^{n-1})$. But according to
	\cite{gr1} this is equivalent to the existence of an element in the group \
	$\pi_{2n-2} (S^n) \cong \pi_{n-2}^S$ \ which has order 2 and Kervaire invariant one.
	In view of further details in section 3 of \cite{gr1} and in our example \ref{exa:5}, and since \ $2 \ker(E \circ \delta) =0$, \ theorem \ref{thm:13} and corollary \ref{cor:3} follow.
	\hfill $\Box$
	\medskip

	In the next nonstable dimension setting assume that \ $n \equiv 2(4), n \geq 6$ \ (otherwise the Wecken condition 1.18 holds, cf. \cite{ja2}, 3.5). Then the EHP-sequence \ref{equ:9} yields the shorter exact sequence
		\begin{equation}
			\xymatrix @R=-.1pc {
				  0 \ar[r]
				& \Z_2 v_{n-1} \quad \subset \quad \pi_{2n-2} (S^{n-1}) \ar[r]^-{E}
				& \pi_{2n-1} (S^n) \ar[r]^-{\frac12 H} \ar@{-->}@/^1pc/[l]^{\partial}
				& \Z \ar[r]
				& 0   \\
				&& \rotatebox{90}{$\in$} \\
				&& w_n
			}
		\end{equation} 
		where $v_{n-1} \coloneqq [\iota_{n-1}, \eta_{n-1}]$ \ , \ $w_n=[\iota_n, \iota_n]$
		\ and \ $\eta_{n-1}$ \ generates \ $\pi_n (S^{n-1})$; moreover
		\begin{equation*}
			\partial w_n = v_{n-1} \not\in 2\pi_{2n-2} (S^{n-1}) = \partial \circ E
			(\pi_{2n-1} (S^{n-1}))
		\end{equation*}
		(cf. \cite{ja2}, lemmas 3.5, 7.4, 5.2 and 3.6). Since the Hopf invariant of \ $w_n$
		\ equals \ $\pm 2$ \ we obtain the splitting
		\begin{equation*}
			\pi_{2n-1} (S^n) \cong E (\pi_{2n-2} (S^{n-1})) \oplus \Z w_n.
		\end{equation*}

		If \ $MCC (f_1, f_2) \not = N^\# (f_1, f_2)$ \ we see (from 1.19) that \ $f_1 \sim f_2$ \ (cf. \ref{thm:11}) can be lifted to yield a class of the form
		\begin{equation*}
			[\tilde f] = E(\alpha) \pm \frac12 H(\tilde f) w_n \; \in \pi_{2n-1} (S^n), \qquad \alpha \in \pi_{2n-2} (S^{n-1}),
		\end{equation*}
		such that
		\begin{equation*}
			\partial ([\tilde f]) = 2\alpha + \frac12 H(\tilde f) v_{n-1} \not= 0
		\end{equation*}
		but \ $E(\partial ([\tilde f])) = E(2 \alpha)$ \ vanishes (and so does \ $2\alpha$ \ since \ $v_{n-1}$ \ cannot be halved).
		Then \ $H(\tilde f) \equiv 2(4)$ \ and \ $MCC (f_1, f_2) = 1$.

		Conversely, if \ $[f_1] = [f_2]$ \ lifts to an odd multiple of \ $[\iota_n, \iota_n]$ \ then \ $1 = MCC (f_1, f_2) \not= N^\# (f_1, f_2) = 0$ \ since \ $\partial([\tilde f]) = v_{n-1} \in \ker E$.
		\hfill $\Box$
		\medskip

	Note that the various versions of Hopf invariants (e.\,g. \`a la James, Hilton, Ganea, \dots) are crucial not only in the results of this section and their proofs (in the form of the EHP-sequence). They seem to make frequent appearances all over topological coincidence theory (see e.\,g. also theorem \ref{thm:7}, the discussion of 1.35, as well as proposition \ref{prop:2}).

\section{Roots and degrees}\label{sec:7}

	In this section we discuss our coincidence invariants for pairs of the form \ $(f, *)$ \ where \ $f \, \colon \, M \to N$ \ is a map between smooth connected manifolds as in 1.36 and \ $*$ \ denotes the constant map with value \ $* \in N$. \

	First note that the pathspace \ $E (f, *)$ \ (which plays such a central role in the definition of Nielsen numbers, cf. \ref{equ:12} and \ref{def:2}) is just the mapping fiber of \ $f$ \ (cf. \cite{wh}, I. 7, p. 43).
	The Reidemeister number \ $\# \pi_0 (E (f,*) )$ \ equals the cardinality of the cokernel of the induced homomorphism
	$f_{\dis *} \, \colon \, \pi_1 (M) \to \pi_1 (N)$ \ (cf. \ref{def:1} and \ref{equ:1}).

	Our $\omega$-invariants give rise to four types of degrees (compare diagram \ref{dia:1}).

	\begin{defn}\label{def:5}
		\begin{alignat*}{3}
			\deg^\# (f)          \;& \coloneqq \; \omega^\# (f,*)             & \in \quad  &&& \Omega^\# (f,*) ; \\
			\widetilde{\deg} (f) \;& \coloneqq \; \tilde \omega (f,*)         & \in \quad  &&& \Omega_{m-n} (E (f,*) ; \tilde \varphi ); \\
			\deg (f)            \; & \coloneqq \; \omega (f,*)                & \in \quad &&& \Omega_{m-n} (M ; \varphi); \\
			\deg^\Z (f)         \; & \coloneqq \; g_{\dis *}([C (f, \dis *)]) & \quad  \in \quad &&& H_{m-n} (M ; \widetilde{\Z}_{\varphi}).
		\end{alignat*}
	\end{defn}
	All of our coincidence invariants are compatible with homotopies.
	Thus we may assume that \ $f$ \ is smooth and has \ $* \in N$ \ as a regular value.
	Then the four degrees defined in \ref{def:5} capture appropriate coincidence data of the submanifold \ $C(f, *) \, = \, f^{-1} (\{ * \})$ \ of \ $M$ \ (compare \ref{equ:2} - \ref{equ:14}) with decreasing accuracy.
	For example, if \ $M$ \ and \ $N$ \ are oriented manifolds having the same dimension \ $m = n$ \ then \ $\deg^\# (f)$ \ and \ $\widetilde \deg (f)$ \ still keep track of the Nielsen decomposition of \ $C (f, *)$, \ but
	\begin{alignat*}{4}
		& \deg (f)    & \in \quad &&& \Omega_0 (M) & = \Z & \qquad\text{and} \\
		& \deg^\Z (f) \quad &  \in \quad &&& H_0 (M ; \Z) \;& = \Z
	\end{alignat*}
	yield just another description of the standard mapping degree.

	Given \ $x \in C(f,*)$ \ and a pathcomponent \ $Q$ \ of \ $E (f,*)$, \ there is a loop \ $\theta$ \ in $N$ at $*$ such that \ $(x, \theta) \in Q$. \ Clearly \ $\theta$ \ defines a homotopy of constant maps and hence induces a fiber homotopy equivalence from \ $E (f,*)$ \ to itself (cf. \cite{ko4}, 3.2) which maps the pathcomponent \ $Q_0$ \ (containing ($x$, constant loop)) to \ $Q$. \
	Similarly, $\theta$ induces selfbijections of the sets which are listed in diagram \ref{dia:1} (cf. e.\,g. \cite{ko4}, 3.3).
	Therefore the contributions of \ $Q_0$ \ and \ $Q$ \ to a given $\omega$-invariant and Nielsen number (cf. \ref{def:2}) are either both trivial or both nontrivial.
	We have proved

	\begin{prop}\label{prop:3}
		The Reidemeister number \ $\# \pi_0 (E (f,*)) \, = \, \#\!\left(\pi_1 (N) / f_{\dis *} (\pi_1 (M) ) \right)$ \ and \ $0$ \ are the only values which any of the four Nielsen numbers of \ $(f, *)$ \ can possibly take.
	\end{prop}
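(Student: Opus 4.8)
The plan is to reduce the statement to the dichotomy that either \emph{every} Nielsen class of $(f,*)$ is essential or \emph{none} is, so that the count can only be $0$ or the total number $\#\pi_0(E(f,*))$ of classes. By definition \ref{def:2} each of the four Nielsen numbers counts precisely those pathcomponents $Q\in\pi_0(E(f,*))$ whose restricted coincidence data contribute nontrivially to the corresponding $\omega$-invariant ($\omega^\#$, $\tilde\omega$, $\omega$, or its Hurewicz image $\mu(\omega)$); hence a priori each lies between $0$ and $\#\pi_0(E(f,*))$, and the identification of this Reidemeister number with $\#(\pi_1(N)/f_{\dis *}(\pi_1(M)))$ was recorded at the beginning of the section. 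Fixing a root $x\in C(f,*)=f^{-1}(\{*\})$ and writing $Q_0$ for the component containing $(x,\text{constant loop})$, it therefore suffices to show that $Q_0$ is essential if and only if an arbitrary class $Q$ is.

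The mechanism I would exploit is the homogeneity special to the root case. Since the second map is constant, a loop $\theta$ at $*$ in $N$ is a closed homotopy of $*$ to itself, and by the general principle that homotopies of the second map induce fiber homotopy equivalences of the pathspace (cf. \cite{ko4}, 3.2) it determines a self-equivalence of $E(f,*)$, given concretely by $(x,\gamma)\mapsto(x,\gamma\cdot\theta)$ with homotopy inverse supplied by $\theta^{-1}$. First I would check that the resulting action of $\pi_1(N,*)$ on $\pi_0(E(f,*))$ is transitive: under the coset description \ref{equ:1} of the Reidemeister set (which for $(f,*)$ collapses to the cosets of $f_{\dis *}(\pi_1(M))$ in $\pi_1(N)$), this action is just translation, hence transitive. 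In particular, for any prescribed $Q$ one can choose $\theta$ so that the induced self-equivalence carries $Q_0$ to $Q$.

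Next I would invoke naturality. The self-equivalence of $E(f,*)$ attached to $\theta$ induces, compatibly with the whole chain $\stab$, $\pr_{\dis *}$, $\mu$ of diagram \ref{dia:1}, selfbijections (isomorphisms in the group-valued cases) of every target set and group in which the $\omega$-invariants live (cf. \cite{ko4}, 3.3), and under these the contribution of $Q_0$ is carried to the contribution of $Q$. Since a bijection sends $0$ to $0$ and nonzero elements to nonzero elements, the coincidence data restricted to $C_{Q_0}$ contribute trivially exactly when those restricted to $C_Q$ do. This yields the equivalence ``$Q_0$ essential $\Leftrightarrow Q$ essential'' for every $Q$, and the two-value conclusion follows at once for all four Nielsen numbers uniformly.

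The hard part is precisely this naturality step. One must verify that the fiber homotopy self-equivalence induced by $\theta$ is genuinely compatible with the construction of each invariant --- that it respects the bordism-set structure underlying $\omega^\#$, commutes (up to the relevant identifications) with the stabilization $\stab$, the projection $\pr_{\dis *}$, and the twisted Hurewicz homomorphism $\mu$, and that it really transports the $Q_0$-datum to the $Q$-datum rather than merely permuting components abstractly. This is the content cited from \cite{ko4}, 3.2 and 3.3; once it is secured, the dichotomy, and hence the proposition, is formal.
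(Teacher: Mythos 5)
Your proposal is correct and takes essentially the same route as the paper: the paper likewise chooses a loop $\theta$ at $*$ with $(x,\theta)\in Q$, notes that $\theta$ (as a homotopy of constant maps) induces a fiber homotopy self-equivalence of $E(f,*)$ carrying $Q_0$ to $Q$ (citing \cite{ko4}, 3.2), and uses the induced selfbijections of the sets in diagram \ref{dia:1} (citing \cite{ko4}, 3.3) to conclude that the contributions of $Q_0$ and $Q$ are simultaneously trivial or nontrivial. The only cosmetic difference is that you establish transitivity through the coset description \ref{equ:1} of the Reidemeister set, whereas the paper simply picks $\theta$ directly so that $(x,\theta)$ lies in the prescribed component.
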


	\begin{example}$\mathbf{M = S^m.}$
		\textit{Here we assume that \ $m,n \geq 2$} \ (the remaining cases being well understood \ -- when \ $M = N = S^1$, \ cf. \ref{thm:7} -- \ or trivial).

		Since the minimum and Nielsen numbers are (free) homotopy invariants we may also assume that the maps \ $f_1, f_2, \dots$ \ have a convenient base point behavior.
		Thus fix base points \ $x_0 \in S^m$ \ and \ $y_1 \not= y_2 = * \in N$, \ and choose a local orientation of \ $N$ \ at the point \ $*$, \ as well as a path \ $\tau$ \ in \ $N$ \ joining \ $*$ \ to \ $y_1$. \
		For any two maps \ $f_i \, \colon \, (S^m , x_0) \to (N, y_i), \; i=1,2$, \ these choices (and the Pontryagin-Thom procedure) allow us to identify the set \ $\Omega^\# (f_1,f_2)$ \ with the homotopy group \ $\pi_m (S^n \wedge ( \Omega N)^+)$ \ (cf. section 6 in \cite{ko6} which uses an observation of A. Hatcher and F. Quinn, cf. \cite{hq}, 3.1).
		Here \ $\Omega N$ \ denotes the space of loops in \ $N$ \ starting and ending at \ $*$, \ and \ $S^n \wedge (\Omega N)^+$ is just the Thom space of the trivial $n$-plane bundle over \ $\Omega N$.

		These identifications present enormous advantages.
		First of all we can now compare the \ $\omega^\#$-invariants of different pairs of maps.
		Moreover we are greatly helped by compatibilities with group structures.
	\end{example}

	\begin{lem}\label{lem:1}
		Given maps
		\begin{equation*}
			f_i, f_i' \;\; \colon \;\; (S^m , x_0) \;\; \to \;\; (N, y_i) \quad, \qquad i= 1,2,
		\end{equation*}
		we have
		\begin{equation*}
			\omega^\# (f_1 + f_1', f_2 + f_2') \;\; = \;\; \omega^\# (f_1, f_2) + \omega^\# (f_1', f_2').
		\end{equation*}
		In particular,
		\begin{equation*}
			\deg^\# \;\; = \;\; \omega^\# (-,*) \;\; : \;\; \pi_m (N, y_1) \;\; \to \;\; \pi_m (S^n \wedge (\Omega N)^+),
		\end{equation*}
		is a group homomorphism.
	\end{lem}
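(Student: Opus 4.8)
The plan is to realize the sum $f_i + f_i'$ through the standard comultiplication and then read off additivity directly from the geometry of coincidence data. Recall that the sum in $\pi_m(N, y_i)$ is defined via the pinch map $c \colon S^m \to S^m \vee S^m$ which collapses an equatorial sphere $S^{m-1}$ to a point, splitting $S^m$ into an upper and a lower hemisphere $D_+, D_-$. Thus $f_i + f_i'$ is homotopic to the map which agrees with $f_i$ on $D_+$ and with $f_i'$ on $D_-$, sending a collar of the equator to the base point $y_i$. First I would form the product map $(f_1 + f_1', f_2 + f_2') \colon S^m \to N \times N$ and arrange, by a small homotopy supported near the equator, that a neighbourhood of $S^{m-1}$ maps into $\{y_1\} \times \{y_2\}$. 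Since $y_1 \neq y_2$ by hypothesis, this neighbourhood is free of coincidences.

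Consequently the coincidence locus decomposes as a disjoint union
\[
	C(f_1 + f_1', f_2 + f_2') \;=\; C(f_1, f_2) \;\amalg\; C(f_1', f_2'),
\]
with the first piece contained in $\ring{D}_+$ and the second in $\ring{D}_-$; moreover each piece inherits exactly the reference map $\tilde g$ into the pathspace and the normal framing $\overline g^\#$ of the corresponding pair (cf. \ref{equ:12}, \ref{equ:10}). The heart of the argument is then to check that, under the Pontryagin-Thom identification $\Omega^\#(f_1, f_2) \cong \pi_m(S^n \wedge (\Omega N)^+)$ set up using the fixed data $x_0, y_1, y_2, \tau$ and the chosen local orientation, a disjoint union of coincidence data supported in the two hemispheres is sent to the sum of the two classes in the homotopy group. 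This is precisely naturality of the Pontryagin-Thom procedure with respect to the pinch map $c$: the comultiplication defining $f_i + f_i'$ on the source is literally the comultiplication defining addition in the target homotopy group $\pi_m(S^n \wedge (\Omega N)^+)$.

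The main obstacle will be the bookkeeping of the loop coordinate. Each coincidence point $x$ contributes, via $\tilde g$, a path $\theta$ in $N$, and the identification with $\pi_m(S^n \wedge (\Omega N)^+)$ measures these loops against the fixed path $\tau$ joining $*$ to $y_1$. I must verify that the loops arising from points of $\ring{D}_+$ and of $\ring{D}_-$ are recorded against the same $\tau$, so that no spurious monodromy is introduced by transporting across the collapsed equator; equivalently, that the fiber homotopy self-equivalences of the pathspaces induced by the path data (cf. \cite{ko4}, 3.2) respect the wedge decomposition. Granting the construction of section 6 in \cite{ko6}, this compatibility is built into the identification, and the displayed additivity follows.

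Finally, the homomorphism assertion for $\deg^\#$ is the special case $f_2 = f_2' = *$: since $* + * = *$ (the pinch of two constant maps is again constant), the identity reads $\omega^\#(f_1 + f_1', *) = \omega^\#(f_1, *) + \omega^\#(f_1', *)$, which is exactly the statement that $\deg^\# = \omega^\#(-,*)$ is additive.
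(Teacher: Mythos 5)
Your proposal is correct and takes essentially the same approach as the paper: its proof likewise observes that \ $C(f_1 + f_1', f_2 + f_2')$ \ splits into the two parts \ $C(f_1,f_2)$ \ and \ $C(f_1',f_2')$ \ lying in different halfspheres separated by the (coincidence-free, since \ $y_1 \not= y_2$) equator, and deduces the homomorphism claim as the special case \ $f_2 \equiv f_2' \equiv *$. Your extra bookkeeping about the Pontryagin--Thom identification and the reference path \ $\tau$ \ just makes explicit what the paper's two-line proof leaves implicit.
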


	\begin{proof}
		The coincidence set \ $C (f_1 + f_1', f_2 + f_2')$ \ consists of the two parts \ $C(f_1,f_2)$ \ and \ $C(f_1',f_2')$ \ which lie in different halfspheres of \ $S^m$ \ and are separated by an equator.
		The second claim is just the special case \ $f_2 \equiv f_2' \equiv *$.
	\end{proof}

	Our choice of the path \ $\tau$ \ from \ $*$ \ to \ $y_1$ \ determines also an isomorphism \ $\pi_m (N, *) \, \cong \, \pi_m (N, y_1)$ \ which we denote by \ $[f] \, \to \, [f^\tau]$. \
	Moreover there is a canonical involution \ $\operatorname{inv}$ \ of the group \ $\pi_m (S^n \wedge (\Omega N)^+)$ \ such that e.\,g.
	\begin{equation*}
		\omega^\# (y_1,f) \;\; = \; \; \operatorname{inv} (\deg^\# ( [f^\tau] )
	\end{equation*}
	for all \ $[f] \in \pi_m(N, *)$ \ (cf. \cite{ko6}, 2.5 and 6.1).
	In particular, given any pair \ $(f_1, f_2)$ \ as in lemma \ref{lem:1}, we see that
	\begin{align*}
		\omega^\# (f_1,f_2) & = \omega^\# (f_1, *) + \omega^\# (y_1, f_2) \\
		                    & = \deg^\# \lt [f_1] \rt + \operatorname{inv} \scr \circ \dis \deg^\# \lt [f_2^\tau] \rt.
	\end{align*}
	Thus, in a way, the root case invariant \ $\deg^\#$ \ is basic for all other $\omega^\#$-invariants.

	Lemma \ref{lem:1} also allows us to split \ $\omega^\# (f_1,f_2)$ \ into a `root component` and a `selfcoincidence component`:
	\begin{equation*}
		\omega^\# (f_1,f_2) \;\; = \;\; \deg^\# (f_1 - f_2^\tau) + \omega^\# (f_2^\tau, f_2)
	\end{equation*}
	In the many cases when \ $N^\# (f_2,f_2)$ \ or, equivalently, \ $\omega^\# (f_2^\tau, f_2)$, \ vanishes (e.\,g. if at least one of the four restrictions in theorem \ref{thm:12} is not satisfied) only the root component survives; then \ $N^\# (f_1, f_2) \, = \, N^\# (f_1 - f_2^\tau, *)$ \ and the other three Nielsen numbers are equal to \ $\# \pi_1 (N)$ \ or \ $0$ \ (cf. \ref{prop:3}).
	On the other hand, if \ $N^\# (f_2, f_2) \, \not= \, 0$, \ then the Reidemeister number \ $\# \pi_1 (N)$ \ (and hence all four Nielsen numbers) of \ $(f_1, f_2)$ \ are bounded above by $2$ (cf. \ref{thm:12}). In particular, this proves theorem \ref{cor:5}.
	\hfill $\Box$
	\bigskip

\subsection*{Acknowledgment}
		It is a pleasure to thank D. Randall for many very stimulating discussions.

\newpage

\end{document}